\titleformat{\subsection}{\it}{\thesubsection.\enspace}{1pt}{}
\newtheorem{theo}{Theorem}[section]
\newtheorem{lemm}[theo]{Lemma}
\newtheorem{coro}[theo]{Corollary}
\newtheorem{prop}[theo]{Proposition}
\newtheorem{rema}[theo]{Remark}
\numberwithin{equation}{section}
\newcommand\lm{{\lesssim}}
\begin{document}
\title{Uniform vanishing damping limit for the 2D inviscid Oldroyd-B model with
fractional stress tensor diffusion
\hspace{-4mm}
}

\author{Chen $\mbox{Liang}^1$ \footnote{Email: liangch89@mail2.sysu.edu.cn},\quad
	Zhaonan $\mbox{Luo}^1$\footnote{E-mail: luozhn7@mail.sysu.edu.cn  } \quad and\quad
	Zhaoyang $\mbox{Yin}^{1}$\footnote{E-mail: mcsyzy@mail.sysu.edu.cn}\\
	$^1\mbox{School}$ of Science,\\ Shenzhen Campus of Sun Yat-sen University, Shenzhen 518107, China}
\date{}
\maketitle
\hrule

\begin{abstract}
This paper is devoted to the uniform vanishing damping limit of the 2D inviscid  Oldroyd-B model with fractional stress tensor diffusion. Firstly, we find that fractional stress tensor diffusion helps to reduce the global regularity of the 2D Oldroyd-B model with damping coefficient $a\in[0,1]$. By virtue of improved Fourier splitting method, we then prove the optimal time decay rates under the critical regularity for $a=0$. When $a\in (0,1]$, we establish time decay rates that are uniform with respect to $a$. Combining the time decay rate for $a\in [0,1]$ and the time integrability, we obtain the uniform damping vanishing rates for the 2D Oldroyd-B model. Using spectral analysis methods, we finally improve the time decay rates for $\mathrm{tr}\tau$ with 
$a\in (0,1]$, which ensure the sharp uniform damping vanishing  rates of $\mathrm{tr}\tau$.\\
\vspace*{5pt}
\noindent {\it 2020 Mathematics Subject Classification}: 35Q31, 76A05, 74B20, 42A38.

\vspace*{5pt}
\noindent{\it Keywords}: The inviscid Oldroyd-B model; Fractional stress tensor diffusion; Critical regularity; Uniform vanishing
damping rates. 
\end{abstract}

\vspace*{-20pt}

\tableofcontents
	\section{Introduction}

 The generalized Oldroyd-B models \cite{2015Elgindi,1958Non} can be written as follows: 
	\begin{align}\label{eq1}
		\left\{\begin{array}{l}
			\partial_tu + u\cdot\nabla u+\nabla {\rm P} = {\rm div}~\tau+\nu\Delta u,~~~~{\rm div}~u=0,\\[1ex]
			\partial_t\tau + u\cdot\nabla\tau+a\tau+Q(\nabla u,\tau)-\mu\Delta\tau=\alpha D(u),\\[1ex]
			u|_{t=0}=u_0,~~\tau|_{t=0}=\tau_0. \\[1ex]
		\end{array}\right.
	\end{align}
	In \eqref{eq1}, $u(t,x)$ represents the velocity of the polymeric liquid, $\tau(t,x)$ is the symmetric tensor of constrains and $\rm P$ denotes the pressure. $Q$ is the following bilinear form
	$$Q(\nabla u, \tau)=\tau \Omega(u)-\Omega(u)\tau-b(D(u)\tau+\tau D(u)),~~~~~~~b\in[-1, 1],$$
	with vorticity tensor
	$$
	\Omega(u)=\frac {\nabla u-(\nabla u)^T} {2},
	$$
	and deformation tensor
	$$
	D(u)=\frac {\nabla u+(\nabla u)^T} {2}.
	$$ 
The parameters $a$, $\alpha$, $\mu$ and $\nu$ are nonnegative. The case $\alpha = b = 0$ is known as the co-rotational Oldroyd-B model. The Oldroyd-B model describes the dynamics of
viscoelastic fluids. For more explanations of the classical Oldroyd-B models \eqref{eq1}, one can refer to \cite{1958Non,Oldroyd1950On}.

Now, we review the relevant mathematical results for the Oldroyd-B model \eqref{eq1}.

\textbf{$\bullet$} $a>0,\ \mu=0 $ and $\nu>0$

 C. Guillop'e and J. C. Saut \cite{Guillope1990} first obtained an unique global strong solution in Sobolev spaces. L. Molinet and R. Talhouk \cite{Molinet} improved the above results. For the co-rotational Oldroyd-B model, P. L. Lions and N. Masmoudi \cite{Lions-Masmoudi} proved there exists a global weak solution. The existence of weak solutions remains an open problem for the (\ref{eq1}) with $b\neq0$. J. Y. Chemin and N. Masmoudi \cite{Chemin2001} proved there exists a unique strong solution in critical homogenous Besov space and established the blow-up criterion. Later, Z. Lei, N. Masmoudi and Y. Zhou \cite{LEI2010328} refined the blow-up criterion. Optimal time decay rates in $H^2$ framework
 for the (\ref{eq1})with the small initial data have been proved by M. Hieber, H. Wen, and R. Zi \cite{Hieber2019}. For more well-posedness results about Oldroyd-B model can be found in \cite{Zi,Fang,Wan,HUANG2022456,DEANNA2020108761}.
 
 \textbf{$\bullet$} $a=0,\ \mu=0$ and $\nu>0$

Y. Zhu \cite{ZHU20182039} established the global well-posedness for 3D Oldroyd-B model under the smallness assumption of $\||\nabla|^{-1}(u_0,\tau_0)\|_{H^3}$. Analogous results in $\mathbb{R}^{2}$ have also been established by Y. Chen and Y.Zhu \cite{CHEN2023606}. Q. Chen and X. Hao \cite{Chen2019} proved the global result in Besov spaces.  

\textbf{$\bullet$} $a>0,\ \nu=0$ and $\mu>0$

T. M. Elgindi and F. Rousset \cite{2015Elgindi} firstly established global existence for (\ref{eq1}) in the Sobolev space $H^s$  under the smallness assumption of $\|(u_0,\tau_0)\|_{H^1}+\|(\mathrm{curl}\ u_0,\tau_0)\|_{B_{\infty,1}^{0}}$. T. M. Elgindi and J. Liu \cite{2015Elgindi1} showed the global regularity in the 3D case under the small initial data. It should be noted that the damping term plays a crucial role in \cite{2015Elgindi} and \cite{2015Elgindi1}. For $a>0$, X. Chen, Z. Luo, Z. Yang and C. Yuan \cite{CLYY} proved the global regularity in the 2D case by smallness assumption of $\|(u_0,\tau_0)\|_{L^2}+\|(\nabla u_0,\tau_0)\|_{B_{\infty,1}^{0}}$. For $a\in[0,1]$, they proved the global regularity with $d=2,3$ and the small assumption of $\|(u_0,\tau_0)\|_{H^1}+\|(\nabla u_0,\tau_0)\|_{B_{\infty,1}^{0}}$. 
Furthermore, they obtained that the sharp uniform vanishing damping rate is $a^{\frac{d}{4}}$. 

In this paper, we consider the following Oldroyd-B model  with
fractional stress tensor diffusion: 
	\begin{align}\label{eq2}
\left\{\begin{array}{l}
			\partial_tu + u\cdot\nabla u+\nabla {\rm P} = {\rm div}~\tau,~~~~{\rm div}~u=0,\\[1ex]
			\partial_t\tau + u\cdot\nabla\tau+a\tau+Q(\nabla u,\tau)+(-\Delta)^{\beta}\tau=D(u),\\[1ex]
			u|_{t=0}=u_0,~~\tau|_{t=0}=\tau_0. \\[1ex]
		\end{array}\right.
	\end{align}
where $a\in [0,\infty)$, $\beta>0$ and fractional Laplacian operator $(-\Delta)^{\beta}$ is defined by
	$$
	(-\Delta)^{\beta}\tau=\mathscr{F}^{-1}(|\xi|^{2\beta}\widehat{\tau}),\quad\quad \widehat{\tau}(\xi,t)=\mathscr{F}\tau(\xi,t)=\int_{\mathbb{R}^d}e^{-ix\cdot\xi}\tau(x,t) dx.
	$$
Then, we briefly review the relevant mathematical results for \eqref{eq2}.

\textbf{$\bullet$} $a=0,~\beta\in [\frac{1}{2},1]$

 P. Constantin, J. Wu, J. Zhao and Y. Zhu \cite{P.Constantin} proved global well-posedness to (\ref{eq2}) with the small data in Sobolev space $H^s$ for $s>1+\frac{d}{2}$. J. Wu and J. Zhao \cite{Wu} obtained global well-posedness for the small initial data in the critical Besov spaces. 
 Z. Chen, L, Liu, D, Qin and W, Ye \cite{Chenzhi} proved the analogous results in the $L^p$ framework. 
Sharp time decay rate has been studied by P. Wang, J. Wu, X. Xu and Y. Zhong \cite{Wu1}. they proved under sufficiently 
small initial data $(u_0,\tau_0)\in L^1\cap H^{4+\frac{6}{\beta}}$ with $\beta\in [\frac{1}{2},1)$ and $d=2$, the following decay properties hold:
\begin{align*}
&\|u(t)\|_{L^{2}}\lesssim\varepsilon(1+t)^{-\frac{1}{2\beta}},\quad\|u(t)\|_{L^{\infty}}\lesssim \varepsilon(1+t)^{-\frac{1}{\beta}},\quad\|\nabla u(t )\|_{L^{2}}\lesssim\varepsilon(1+t)^{-\frac{1}{\beta}},\\ 
&\|\nabla u(t)\|_{L^{\infty}}\lesssim\varepsilon(1+t)^{-\frac{3}{2\beta}},\quad\|\mathbb{P}\nabla\cdot\tau(t)\|_{L^ {2}}\lesssim\varepsilon(1+t)^{-\frac{1}{\beta}}. 
\end{align*}
They also obtained the decay properties for 3D cases with $\beta\in [\frac{1}{2},1]$. Z. Luo, W. Luo and Z. Yin \cite{LLY} consider the critical case with $d=2,\beta=1$. Using Fourier splitting method and the time weighted energy estimates, they derived the optimal time decay rate of $\|(u,\tau)\|_{ H^s}$ with $s>2$ under smallness condition.  Furthermore, the optimal time decay rate with $\beta\in [\frac{1}{2},1)$ can be obtained by similar methods. Finally, they got the following time decay rate:
\begin{align*}
\left\|\Lambda^{s}(u, \tau)\right\|_{L^{2}} \lesssim (1+t)^{-\frac{s+1}{2\beta}}.
\end{align*}

	\subsection{The main results}

In this paper, we explore the uniform vanishing damping limit for the 2D inviscid Oldroyd-B model (\ref{eq2}) with $\beta\in [\frac{1}{2},1)$. We first establish global regularity in Sobolev spaces for any $a\in[0,1]$. A precise statement is as follows.
\begin{theo}\label{1theo1}
Let $d=2,\ a\in [0,1],\ \frac{1}{2}<\beta<1$, $s\ge2\beta $ or $\beta=\frac{1}{2}$, $s>1$. Let $(u^a,\tau^a)$ be a strong solution of (\ref{eq2}) with the 
initial data $(u_{0},\tau_{0})\in H^{s}$ and $ \tau_{0}$ is symmetric. There exists a small constant $\delta $ such that if 
\begin{align*}
\|(u_{0},\tau_{0})\|_{H^{s}}\leq\delta,
\end{align*}
then the system (\ref{eq2}) admits a unique global strong solution $(u^a,\tau^a)\in C([0,\infty),H^{s})$. Moreover, we obtain thar for all $t>0$, there holds
\begin{align*}
\quad\frac{d}{dt}\left(\|(u^a,\tau^a)\|_{H^{s}}^{2}+2k\langle-\nabla u^a,\tau^a\rangle_{H^{s-\beta}}\right)+\frac{k}{2}\|\nabla u^a\|_{H^{s-\beta}}^{2}+\|\Lambda^{\beta}\tau^a\|_{H^{s}}^{2}\leq 0,
\end{align*}
where $k$ is a sufficiently small constant.
\end{theo}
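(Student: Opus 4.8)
The plan is to establish the displayed differential inequality as an a priori estimate and then conclude by the standard scheme: a local existence theorem in $H^s$ supplies a solution on a maximal interval, and a continuation argument closes the loop, since for $k$ small the functional $\mathcal E(t):=\|(u^a,\tau^a)\|_{H^s}^2+2k\langle-\nabla u^a,\tau^a\rangle_{H^{s-\beta}}$ is equivalent to $\|(u^a,\tau^a)\|_{H^s}^2$ and the inequality forces it to be nonincreasing, which keeps the solution small and precludes blow-up. Thus everything reduces to the energy inequality, which I would prove by the modified-energy method. The entire difficulty is that \eqref{eq2} is inviscid in $u$: the plain $H^s$ estimate exhibits only the dissipation $\|\Lambda^\beta\tau^a\|_{H^s}^2$, and this cannot by itself control the top-order coupling between $u^a$ and $\tau^a$. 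The cross term $2k\langle-\nabla u^a,\tau^a\rangle_{H^{s-\beta}}$ is introduced precisely to manufacture the missing velocity dissipation $\|\nabla u^a\|_{H^{s-\beta}}^2$.

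First I would run the basic estimate: apply $\Lambda^s$ to both equations and pair with $\Lambda^s u^a$ and $\Lambda^s\tau^a$ in $L^2$. The pressure drops out by $\mathrm{div}\,u^a=0$, and the \emph{linear} coupling cancels, since symmetry of $\tau^a$ and the divergence-free condition give $\langle\mathrm{div}\,\tau^a,u^a\rangle_{H^s}=-\langle\tau^a,D(u^a)\rangle_{H^s}=-\langle D(u^a),\tau^a\rangle_{H^s}$. This yields $\frac{d}{dt}\|(u^a,\tau^a)\|_{H^s}^2+2a\|\tau^a\|_{H^s}^2+2\|\Lambda^\beta\tau^a\|_{H^s}^2=\mathcal N$, where $\mathcal N$ collects the transport and $Q$ nonlinearities; here $s>1=d/2$ is used so that $H^s$ is an algebra embedding in $L^\infty$. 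The obstruction is already visible: the worst piece of $\langle\Lambda^sQ(\nabla u^a,\tau^a),\Lambda^s\tau^a\rangle$ places $s$ derivatives on $\nabla u^a$, hence demands control of $\|u^a\|_{\dot H^{s+1}}$, which is unavailable from the $\tau$-dissipation alone.

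Next I would differentiate the cross term, writing $\partial_tu^a=\mathbb{P}\,\mathrm{div}\,\tau^a-\mathbb{P}(u^a\cdot\nabla u^a)$ and substituting the $\tau$-equation. The source $D(u^a)$ produces the decisive contribution $2k\langle-\nabla u^a,D(u^a)\rangle_{H^{s-\beta}}=-k\|\nabla u^a\|_{H^{s-\beta}}^2$ (again by div-free plus symmetric/antisymmetric orthogonality); half is kept as the stated dissipation, half is spent on errors. The remaining linear terms are exactly where the hypotheses on $\beta$ and $s$ are consumed: $\langle-\nabla\mathbb{P}\,\mathrm{div}\,\tau^a,\tau^a\rangle_{H^{s-\beta}}\lesssim\|\tau^a\|_{\dot H^{s-\beta+1}}^2$, which $\|\Lambda^{s+\beta}\tau^a\|_{L^2}^2$ controls precisely because $\beta\ge\frac12$ forces $s-\beta+1\le s+\beta$; while the damping cross-term $a\langle\nabla u^a,\tau^a\rangle_{H^{s-\beta}}\lesssim\eta\|\nabla u^a\|_{H^{s-\beta}}^2+C_\eta\|\tau^a\|_{H^{s-\beta}}^2$ has its $\tau$-factor dominated by $\|\Lambda^\beta\tau^a\|_{H^s}^2$ at low frequencies precisely because $s\ge2\beta$ gives $s-\beta\ge\beta$ (and $a\le1$ keeps the constant uniform in $a$). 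The nonlinear terms, including the dangerous $Q$ contribution, I would close with the duality splitting $\langle f,g\rangle\le\|f\|_{\dot H^{-\beta}}\|g\|_{\dot H^\beta}$: taking $g=\Lambda^s\tau^a$ gains $\beta$ derivatives on $\tau^a$ (matching $\|\Lambda^{s+\beta}\tau^a\|$) while the $\dot H^{-\beta}$-norm lowers the velocity order from $s+1$ to exactly $s+1-\beta$ (matching $\|\nabla u^a\|_{H^{s-\beta}}$), after which the smallness $\|(u^a,\tau^a)\|_{H^s}\lesssim\delta$, entering through $\|\tau^a\|_{L^\infty}$, renders each nonlinear term a small multiple of the two dissipations.

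The hard part will be the simultaneous calibration in this last step: the cross term must generate enough velocity dissipation to tame the top-order coupling, yet its own by-products—the $\nabla\mathrm{div}\,\tau^a$ term and the $a$-damping term—must not exceed the $\tau$-dissipation. This is feasible only in the window $\beta\ge\frac12$, $s\ge2\beta$, and only after first fixing $k$ small (to keep $\mathcal E$ coercive and to shrink the cross-term by-products) and then $\delta$ small (to swallow the genuinely nonlinear remainder, whose velocity order matches the recovered dissipation only borderline). Once $\frac{d}{dt}\mathcal E+\frac{k}{2}\|\nabla u^a\|_{H^{s-\beta}}^2+\|\Lambda^\beta\tau^a\|_{H^s}^2\le0$ is in hand, global existence and the equivalence of $\mathcal E$ with $\|(u^a,\tau^a)\|_{H^s}^2$ follow, with all constants independent of $a\in[0,1]$.
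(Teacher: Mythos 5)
Your overall architecture coincides with the paper's (modified energy with the cross term $2k\langle-\nabla u^a,\tau^a\rangle_{H^{s-\beta}}$, velocity dissipation recovered from the source $D(u^a)$, linear by-products absorbed under $\beta\ge\tfrac12$, then bootstrap/continuation), and your treatment of $\langle\mathbb{P}\,\mathrm{div}\,\tau^a,\mathrm{div}\,\tau^a\rangle_{H^{s-\beta}}$ is correct. But there is a genuine gap at the term that is in fact the paper's central new estimate: the velocity self-advection $\langle\Lambda^s(u^a\cdot\nabla u^a),\Lambda^s u^a\rangle$. Your closing device — the duality splitting with $g=\Lambda^s\tau^a$ — is tailored to terms tested against the stress, and it cannot touch this term: it contains no $\tau^a$, and applying the same splitting against $\Lambda^s u^a$ would require $\|u^a\|_{\dot H^{s+\beta}}$, whereas the dissipation you have recovered only reaches order $s+1-\beta$, and $s+\beta>s+1-\beta$ exactly in the regime $\beta>\tfrac12$ of the theorem. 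The missing idea is the one the paper highlights in its Motivations: use $\mathrm{div}\,u^a=0$ to reduce to the commutator $\langle[\Lambda^s,u^a\cdot\nabla]u^a,\Lambda^s u^a\rangle$, then apply the Kato--Ponce commutator estimate together with the two-dimensional Gagliardo--Nirenberg inequality with the $\beta$-dependent exponents $L^{2/\beta}$ and $L^{1/(1-\beta)}$, giving
\begin{align*}
|\langle[\Lambda^{s},u^a\cdot\nabla]u^a,\Lambda^{s}u^a\rangle|
\lesssim \|\Lambda^{s}u^a\|_{L^{2/\beta}}^{2}\,\|\nabla u^a\|_{L^{1/(1-\beta)}}
\lesssim \|\Lambda^{s+1-\beta}u^a\|_{L^{2}}^{2}\,\|u^a\|_{H^{s}}
\lesssim \delta\,\|\nabla u^a\|_{H^{s-\beta}}^{2}.
\end{align*}
This is also precisely where the hypothesis $\beta<1$ is consumed (the exponents degenerate at $\beta=1$, which is why the critical result fails there, cf.\ the paper's Remark 1.2); your proposal never uses $\beta<1$ anywhere, which is a sign the decisive step is absent.

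A second, smaller error: your absorption of the damping cross-term is wrong as stated. You claim $C_\eta\|\tau^a\|_{H^{s-\beta}}^2$ is ``dominated by $\|\Lambda^\beta\tau^a\|_{H^s}^2$ at low frequencies because $s\ge 2\beta$.'' This is false regardless of how $s-\beta$ and $\beta$ compare: the dissipation symbol $|\xi|^{2\beta}(1+|\xi|^2)^{s}$ vanishes at $\xi=0$, so the fractional dissipation controls no \emph{inhomogeneous} norm of $\tau^a$. The correct bookkeeping (the paper's estimate of $I_8$) keeps both prefactors: the term is
$ak\langle\mathrm{div}\,\tau^a,u^a\rangle_{H^{s-\beta}}\le \tfrac{a}{16}\|\tau^a\|_{H^{s}}^{2}+4ak^{2}\|\nabla u^a\|_{H^{s-\beta}}^{2}$,
where the first piece is absorbed by the \emph{damping} dissipation $a\|\tau^a\|_{H^{s}}^{2}$ (generated by $a\tau^a$ on the left-hand side and only discarded at the very end), and the second by the velocity dissipation using $a\le1$ and $k$ small. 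This part of your argument is repairable by the standard Young splitting, but the mechanism you propose would not close.
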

\begin{rema}
	Note that the result of critical global regularity in Sobolev spaces fails for $\beta=1$. The phenomenon is unique to the Oldroyd-B equations (\ref{eq2}) with fractional stress tensor diffusion.
\end{rema}
\begin{rema}
J. Wu and J. Zhao \cite{Wu} proved if 
 $$\tau_0\in\dot{B}^{\frac{d}{2}+1-2\beta}_{2, 1}\cap \dot{B}^{\frac{d}{2}}_{2, 1}, u_0\in\dot{B}^{\frac{d}{2}+1-2\beta }_{2, 1}\cap \dot{B}^{\frac{d}{2}+2\beta-1}_{2, 1},$$
 then (\ref{eq2}) has a unique global solution with small initial data. Theorem \ref{1theo1} implies the global well-posedness in critical Sobolev space $H^{2\beta}$ for $d=2$. Note that $H^{2\beta}\hookrightarrow B^{2\beta }_{2, 1}$. 
\end{rema}

We then consider the optimal time decay rates for (\ref{eq2}) with $a=0$, which is crucial for us to analysis the uniform vanishing damping rates. Our result states as follows.
\begin{theo}\label{1theo2}
Let $d=2,\ a=0$. Let $(u^0,\tau^0)$ be a strong solution of (\ref{eq2}) with the initial data  $\left(u_{0}, \tau_{0}\right)$ under the condition in Theorem \ref{1theo1}. If additionally $\left(u_{0}, \tau_{0}\right) \in \dot{B}_{2, \infty}^{-1}$, then there exists  $C>0$  such that for every $t>0$ and $s_1\in[0,s]$,
\begin{align*}
\left\|\Lambda^{s_1}(u^0, \tau^0)\right\|_{L^{2}} \leq C(1+t)^{-\frac{s_1+1}{2 \beta}}.
\end{align*}
Furthermore, suppose that  $\left(u_{0}, \tau_{0}\right) \in H^{s+\beta}$  and  $0<\left|\int_{\mathbb{R}^{2}}\left(u_{0}, \tau_{0}\right) d x\right|$ , then there exists  $C_{\beta} \leq C $ such that
\begin{align*}
\left\|\Lambda^{s_1}(u^0, \tau^0)\right\|_{L^{2}} \geq \frac{C_{\beta}}{2}(1+t)^{-\frac{s_1+1}{2 \beta}}.
\end{align*}
\end{theo}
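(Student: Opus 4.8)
The plan is to obtain the upper bound by an \emph{improved} Fourier splitting argument combined with time-weighted energy estimates, and the lower bound by a low-frequency analysis anchored on a conservation law. Throughout I would work from the modified energy of Theorem \ref{1theo1} with $a=0$, writing $E(t)=\|(u^0,\tau^0)\|_{H^s}^2+2k\langle-\nabla u^0,\tau^0\rangle_{H^{s-\beta}}$, which is equivalent to $\|(u^0,\tau^0)\|_{H^s}^2$ for $k$ small and satisfies $\frac{d}{dt}E+\frac{k}{2}\|\nabla u^0\|_{H^{s-\beta}}^2+\|\Lambda^\beta\tau^0\|_{H^s}^2\le 0$.

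First I would establish the propagation of the negative Besov regularity, namely $\sup_{t\ge 0}\|(u^0,\tau^0)(t)\|_{\dot B^{-1}_{2,\infty}}\le C$. This follows by localizing \eqref{eq2} to each dyadic frequency block, performing an $L^2$ estimate on the block, and controlling the quadratic terms $u\cdot\nabla u$, $Q(\nabla u,\tau)$ and the pressure via product laws in Besov spaces together with the global bounds already furnished by Theorem \ref{1theo1}; a Gronwall argument then closes the bound. Recall that in $d=2$ one has $L^1\hookrightarrow\dot B^{-1}_{2,\infty}$, so this hypothesis is the scaling-critical replacement for $L^1$ data and is responsible for the exponent in the stated rate.

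Next comes the base decay at the $L^2$ level. I would run the Fourier splitting method on the lowest-order part of $E$: on the complement of a time-dependent ball $S(t)=\{|\xi|\le g(t)\}$ one bounds the dissipation from below, while on $S(t)$ one uses Step 1 in the form $\int_{S(t)}|\widehat{(u^0,\tau^0)}|^2\,d\xi\lesssim g(t)^2\,\|(u^0,\tau^0)\|_{\dot B^{-1}_{2,\infty}}^2$. The delicate point, and the main obstacle, is that the artificial dissipation produced for $u^0$ by the cross term is only of order $\nabla$, i.e. $|\xi|^2|\hat u^0|^2$, which at low frequencies is strictly weaker than the genuine fractional dissipation $|\xi|^{2\beta}|\hat\tau^0|^2$ felt by $\tau^0$ (recall $\beta<1$, so $|\xi|^{2\beta}>|\xi|^2$ near the origin). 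Using the $u$-dissipation alone forces the small radius $g\sim(1+t)^{-1/(2\beta)}$ to produce a convergent time integral and hence no decay, while the large radius $g\sim(1+t)^{-1/2}$ yields only the suboptimal rate $(1+t)^{-1/2}$. To reach the sharp exponent one must exploit the coupling: the cross term $\langle-\nabla u^0,\tau^0\rangle$ transfers the velocity energy into $\tau^0$, whose stronger dissipation then feeds back, so that the effective low-frequency dissipation of the whole state is of order $|\xi|^{2\beta}$ (a hypocoercivity mechanism, visible in the linearized symbol whose low-frequency eigenvalues have real part comparable to $-|\xi|^{2\beta}$). Implementing this refined splitting, with $g(t)\sim(1+t)^{-1/(2\beta)}$, gives $\|(u^0,\tau^0)(t)\|_{L^2}\lesssim(1+t)^{-1/(2\beta)}$. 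I would then obtain the top-order rate $\|\Lambda^{s}(u^0,\tau^0)\|_{L^2}\lesssim(1+t)^{-(s+1)/(2\beta)}$ by a time-weighted energy estimate, multiplying the energy–dissipation inequality by a suitable power $(1+t)^m$, integrating, and feeding in the already-established lower-order rates; interpolation $\|\Lambda^{s_1}f\|_{L^2}\le\|f\|_{L^2}^{1-s_1/s}\|\Lambda^s f\|_{L^2}^{s_1/s}$ between the $L^2$ and $\Lambda^s$ endpoints then produces exactly $(1+t)^{-(s_1+1)/(2\beta)}$ for every $s_1\in[0,s]$.

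For the lower bound I would argue at low frequencies. Integrating the $u$-equation over $\mathbb{R}^2$ shows that the mean $\int u^0\,dx=\hat u^0(t,0)$ is conserved, hence equal to $\int u_0\,dx$; by the hypothesis $0<|\int(u_0,\tau_0)\,dx|$ this quantity (or, if the nonzero mean is carried by $\tau_0$, the corresponding asymptotic mean of $\tau^0$, whose time derivative is the integrable quadratic term $\int Q(\nabla u^0,\tau^0)$) is nonzero. I would then show that $\widehat{(u^0,\tau^0)}(t,\xi)$ remains bounded below by a fixed fraction of this mean on a shrinking ball $|\xi|\le r(t)\sim(1+t)^{-1/(2\beta)}$. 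The cleanest route is to split $(u^0,\tau^0)=e^{t\mathcal L}(u_0,\tau_0)+(\text{Duhamel nonlinear remainder})$, where the linear flow already realizes the claimed rate, and then verify that the quadratic remainder decays strictly faster; the main obstacle here is precisely this gain of an extra power of $(1+t)$ for the nonlinear correction, which uses the upper bounds from the previous step together with the extra regularity $(u_0,\tau_0)\in H^{s+\beta}$. Once the pointwise lower bound on the ball is secured,
\[
\|\Lambda^{s_1}(u^0,\tau^0)(t)\|_{L^2}^2\ \gtrsim\ \int_{|\xi|\le r(t)}|\xi|^{2s_1}\,d\xi\ \sim\ r(t)^{2s_1+2}\ \sim\ (1+t)^{-(s_1+1)/\beta},
\]
and taking square roots yields the asserted bound $\tfrac{C_\beta}{2}(1+t)^{-(s_1+1)/(2\beta)}$, the factor $\tfrac12$ absorbing the error over any finite time range.
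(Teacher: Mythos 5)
Your proposal contains two genuine gaps, one in each half of the argument.

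\textbf{Upper bound: the Besov propagation cannot come first.} You propose to establish $\sup_{t}\|(u^0,\tau^0)(t)\|_{\dot B^{-1}_{2,\infty}}\le C$ as Step 1, using only product laws, the global bounds of Theorem \ref{1theo1}, and Gronwall, and then to feed the solution's uniform Besov bound into the Fourier splitting. This step fails as stated: the dyadic energy estimate gives
$M^2(t)\le CM^2(0)+CM(t)\int_0^t\bigl(\|F\|_{\dot B^{-1}_{2,\infty}}+\|G\|_{\dot B^{-1}_{2,\infty}}\bigr)dt'$ with $\|F\|_{\dot B^{-1}_{2,\infty}}\lesssim\|F\|_{L^1}\lesssim\|u\|_{L^2}\|\nabla u\|_{L^2}$, and Theorem \ref{1theo1} alone only provides $\|u\|_{L^2}\lesssim\delta$ and $\nabla u\in L^2_t$, so the flux integral grows like $\delta^2\sqrt t$ and no uniform bound follows; a paradifferential Gronwall variant runs into the same non-integrable coefficient. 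This is a chicken-and-egg problem: the uniform Besov bound needs decay, and the optimal decay needs the uniform Besov bound. The paper breaks the circle by a bootstrap in the opposite order: the first splitting step uses only the \emph{initial data's} $\dot B^{-1}_{2,\infty}$ norm for the low-frequency ball (estimate (\ref{326})) and a Minkowski/Duhamel bound for the nonlinear contribution (estimate (\ref{327})), yielding the suboptimal rate $(1+t)^{-\frac{1}{2\beta}+\frac12}$; this is then iterated (Steps 2--3), and only \emph{after} decay of order $(1+t)^{-\frac{1}{2\beta}-\frac12}$ for $\|(u,\tau)\|_{L^2}\|\nabla(u,\tau)\|_{L^2}$ is available does the integral $\int_0^t(\|F\|_{L^1}+\|G\|_{L^1})dt'$ converge, which is precisely what closes the propagation of $M(t)$ in Step 4 and unlocks the optimal rate. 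Your identification of the hypocoercivity mechanism (the extra cross term $\langle\tau,-\Lambda^{-2+2\beta}\nabla u\rangle$ producing $|\xi|^{2\beta}$-order dissipation of $u$, which is how the paper builds $E_0,D_0$) is correct, but the ordering of the steps is not a cosmetic issue; as proposed, Step 1 is not provable.

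\textbf{Lower bound: the remainder does not decay strictly faster.} Your plan hinges on the claim that the Duhamel nonlinear correction gains an extra power of $(1+t)$ over the linear flow. For this system that is false: the forcing $G$ contains $Q(\nabla u,\tau)$, which is \emph{not} a divergence, so $\widehat G(t,0)=\int Q\,dx\neq0$ generically; the contribution of early times $t'\in[0,t/2]$ to the Duhamel integral then behaves like $\bigl(\int_0^\infty\|G\|_{L^1}dt'\bigr)(1+t)^{-\frac{1+s_1}{2\beta}}$, i.e. the \emph{same} rate as the linear part, only with a constant of size $O(\delta^2)$. This is exactly why the paper does not attempt a "strictly faster" bound: Proposition \ref{3prop4} instead proves, via weighted energy estimates on the system (\ref{344}) for $(u_N,\tau_N)$ (deliberately avoiding spectral analysis of the coupled linearized semigroup $e^{t\mathcal L}$, whose decay bounds your Duhamel route would also require), that $\|\Lambda^{s_1}(u_N,\tau_N)\|_{L^2}^2\le\frac{C_0^2}{4}(1+t)^{-\frac{1+s_1}{\beta}}$ — same rate, constant made small relative to the linear lower bound $C_0^2(1+t)^{-\frac{1+s_1}{\beta}}$ by tuning $\delta$, $k_0$, $C_2$ — and concludes by the triangle inequality. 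Your low-frequency treatment of the linear flow (nonzero mean, Fourier continuity near $\xi=0$, the bound $|(\widehat u_L,\widehat\tau_L)|^2\ge e^{-2|\xi|^{2\beta}t}|(\widehat u_0,\widehat\tau_0)|^2$) matches the paper's, but the treatment of the remainder, which you yourself flag as "the main obstacle," is resolved by an assertion that cannot hold.
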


Then we consider the time decay rate and key integrability for  (\ref{eq2}) with $a\in [0,1]$.

\begin{theo}\label{1theo3}
  Let $d=2,\ a\in [0,1]$, $\frac{1}{2}\le\beta<1$ and $s\ge 1+\beta$. Let $(u^a,\tau^a)$  be a strong solution of (\ref{eq2}) with the initial data $(u_{0},\tau_{0})\in H^s$ under the condition in Theorem \ref{1theo1}. If additionally $\left(u_{0}, \tau_{0}\right) \in \dot{B}_{2, \infty}^{-1}$, then there exists a positive constant $C$  such that
  \begin{align*}
 \|(u^{a},\tau^{a})\|_{L^{2}}+(1+t)^{\frac{1}{2}}\|\nabla(u^{a},\tau^{a})\|_{L^ {2}}\leq C(1+t)^{-\frac{1}{2}}.
  \end{align*}
  If additionally $s>1+\beta$, we have
  \begin{align*}
   \int_{0}^{\infty}\|\nabla (u^{a},\tau^{a})\|_{L^{\infty}}dt\leq C.
  \end{align*}
  \end{theo}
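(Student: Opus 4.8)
The plan is to combine a uniform-in-$a$ energy--dissipation inequality with the (improved) Fourier splitting method of Theorems \ref{1theo1}--\ref{1theo2}, and then to bootstrap the resulting low-order decay into the time-integrability of $\|\nabla(u^a,\tau^a)\|_{L^\infty}$. The decisive structural point is that every piece of dissipation we exploit is $a$-independent: the fractional diffusion furnishes $\|\Lambda^\beta\tau\|_{L^2}^2$ (Fourier symbol $|\xi|^{2\beta}$), while a small cross term $k\langle u,\mathrm{div}\,\tau\rangle$ --- the base-level analogue of the one appearing in Theorem \ref{1theo1} --- generates a genuine velocity dissipation $\tfrac{k}{4}\|\nabla u\|_{L^2}^2$ (symbol $|\xi|^2$); the damping $a\|\tau\|_{L^2}^2$ is nonnegative and may simply be discarded in the upper bounds. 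Since $|\xi|^2\le|\xi|^{2\beta}$ for $|\xi|\le1$, the velocity symbol $|\xi|^2$ governs the low-frequency behaviour and produces exactly the heat rate $(1+t)^{-1/2}$, uniformly for $a\in[0,1]$.

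For the decay statement I would first record the base-level identity $\tfrac{d}{dt}\tfrac12\|(u,\tau)\|_{L^2}^2=-a\|\tau\|_{L^2}^2-\|\Lambda^\beta\tau\|_{L^2}^2$ (the coupling terms $\pm\langle D(u),\tau\rangle$ cancel), which carries no velocity dissipation. Adding $k\tfrac{d}{dt}\langle u,\mathrm{div}\,\tau\rangle$ and using $\|D(u)\|_{L^2}^2=\tfrac12\|\nabla u\|_{L^2}^2$ introduces $-\tfrac{k}{2}\|\nabla u\|_{L^2}^2$; the by-products $k\|\mathbb P\,\mathrm{div}\,\tau\|_{L^2}^2$, $ka\langle\nabla u,\tau\rangle$ and $k\langle\nabla u,(-\Delta)^\beta\tau\rangle$ are absorbed, on the low-frequency band relevant to splitting, by $\|\Lambda^\beta\tau\|_{L^2}^2$ and $a\|\tau\|_{L^2}^2$ (using $a\le1$ and smallness from Theorem \ref{1theo1}), the high-frequency remainder being controlled by the global $H^s$ bound. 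This yields $\tfrac{d}{dt}\mathcal E_0+c(\|\nabla u\|_{L^2}^2+\|\Lambda^\beta\tau\|_{L^2}^2)\le0$ with $\mathcal E_0\simeq\|(u,\tau)\|_{L^2}^2$. After propagating $\sup_t\|(u^a,\tau^a)\|_{\dot B^{-1}_{2,\infty}}\le C$ uniformly (so that $\int_{|\xi|\le g}|\widehat{(u,\tau)}|^2\,d\xi\lesssim g^2$), splitting at $g(t)^2=\alpha(1+t)^{-1}$ and using $\|\nabla u\|_{L^2}^2\ge g^2\|u\|_{L^2}^2-Cg^4$ gives $\tfrac{d}{dt}\mathcal E_0+\tfrac{c\alpha}{1+t}\mathcal E_0\lesssim(1+t)^{-2}$, hence $\|(u^a,\tau^a)\|_{L^2}\lesssim(1+t)^{-1/2}$ for $\alpha$ large. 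Repeating at the $\dot H^1$ level, but now estimating the low frequencies by the $L^2$ decay just obtained, $\int_{|\xi|\le g}|\xi|^2|\widehat{(u,\tau)}|^2\,d\xi\le g^2\|(u,\tau)\|_{L^2}^2\lesssim g^2(1+t)^{-1}$, produces $\|\nabla(u^a,\tau^a)\|_{L^2}\lesssim(1+t)^{-1}$. All constants depend only on $\delta,s,\beta$, not on $a$.

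For the integrability statement, the point is that $s>1+\beta$ makes the embedding $\dot H^{\,s-\beta}\hookrightarrow L^\infty(\mathbb R^2)$ available. By Gagliardo--Nirenberg, $\|\nabla f\|_{L^\infty}\lesssim\|f\|_{\dot H^1}^{1-\theta}\|f\|_{\dot H^{\sigma}}^{\theta}$ with $\theta=\tfrac{1}{\sigma-1}$ for any $\sigma\in(2,\,s+1-\beta]$ in the velocity component and $\sigma\in(2,\,s+\beta]$ in the stress component; both ranges are nonempty precisely because $s>1+\beta$ forces $s+1-\beta>2$ and, since $\beta\ge\tfrac12$, $s+\beta>1+2\beta>2$. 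These orders $\sigma>2$ lie strictly below the top dissipation orders $s+1-\beta$ (velocity) and $s+\beta$ (stress), so an iterated Fourier splitting at level $\sigma$, fed by the lower-order decay already proven, delivers a uniform-in-$a$ bound $\|f\|_{\dot H^{\sigma}}\lesssim(1+t)^{-\rho}$ with $\rho>1$ (the heat scaling $\rho=\tfrac12+\tfrac{\sigma}{2}$ being the worst case, approached as $a\to0^+$ through the faster fractional rate $\tfrac{1+\sigma}{2\beta}$). Combined with $\|f\|_{\dot H^1}\lesssim(1+t)^{-1}$ this gives $\|\nabla(u^a,\tau^a)\|_{L^\infty}\lesssim(1+t)^{-[(1-\theta)+\theta\rho]}$ with exponent strictly larger than $1$, so $\int_0^\infty\|\nabla(u^a,\tau^a)\|_{L^\infty}\,dt<\infty$ uniformly in $a$.

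The hard part will be this last uniform high-order decay. The rates of the first part are sharp --- $\|\nabla(u,\tau)\|_{L^2}\sim(1+t)^{-1}$ is the heat rate and cannot be improved uniformly in $a$ --- so a naive interpolation between $\|\nabla f\|_{L^2}$ and the merely square-integrable-in-time top dissipation lands exactly on the non-integrable borderline (the Hölder exponent equals $1$). The strict gap $s>1+\beta$ is what rescues the argument, by supplying a norm $\dot H^{\sigma}$ with $\sigma>2$ whose decay exponent $\rho>1$ strictly clears the threshold. Proving that decay uniformly in $a$ is delicate because for small $a$ the heat-like band $|\xi|\lesssim a^{1/2\beta}$ and the fractional band coexist; the key is that the Fourier-splitting lower bounds on the dissipation ($|\xi|^2$ for $u$ via the cross term, $|\xi|^{2\beta}$ for $\tau$ via the diffusion) are $a$-independent while the discarded damping only accelerates decay, so the heat rate is a uniform upper envelope over $a\in[0,1]$. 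The remaining bookkeeping --- uniform propagation of $\dot B^{-1}_{2,\infty}$, uniform control of the transport and $Q(\nabla u,\tau)$ nonlinearities, and the time-weighted integration --- is routine once these rates are in hand.
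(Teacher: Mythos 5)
Your overall frame --- a uniform-in-$a$ energy--dissipation inequality with a small cross term, Schonbek-type Fourier splitting at the heat scale, then interpolation for the $L^\infty$ integrability --- is the same as the paper's, but both halves of your proposal rest on steps that are not actually available, and in each case the missing step is exactly where the paper's work lies.

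For the decay rates: you propose to run the splitting in a single pass ``after propagating $\sup_t\|(u^a,\tau^a)\|_{\dot B^{-1}_{2,\infty}}\le C$ uniformly,'' but that propagation is not a preliminary; it is essentially equivalent to the decay being proved. The only estimate for $M_1(t)=\sup_{t'\le t}\|(u^a,\tau^a)\|_{\dot B^{-1}_{2,\infty}}$ is $M_1^2(t)\lesssim M_1^2(0)+M_1(t)\int_0^t(\|F\|_{L^1}+\|G\|_{L^1})\,dt'$ with $\|F\|_{L^1}+\|G\|_{L^1}\lesssim\|(u^a,\tau^a)\|_{L^2}\|\nabla(u^a,\tau^a)\|_{L^2}$; without decay, Cauchy--Schwarz against the finite total dissipation bounds this integral only by $O(\sqrt t\,)$, so $M_1$ may grow. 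If instead you use only the initial data's Besov norm (as in the paper's Step 1), the Duhamel contribution on the splitting ball is $\lesssim|S(t)|^{1/2}\int_0^t\|(u^a,\tau^a)\|_{L^2}^2\|\nabla(u^a,\tau^a)\|_{L^2}\,dt'\lesssim(1+t)^{-1/2}(1+t)^{1/2}=O(1)$: since for $a\in(0,1]$ the velocity dissipation is only $\|\nabla u^a\|_{H^{s-\beta}}^2$ (the low-order trick $\langle\tau,-\Lambda^{-2+2\beta}\nabla u\rangle$ fails, as the introduction explains), the split radius must be $|\xi|^2\sim(1+t)^{-1}$, and the first pass yields no decay at all. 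Breaking this circle is the content of Proposition \ref{4prop1}: the logarithmic splitting $f(t)=\ln^3(e+t)$ gives $\bar E_0\lesssim\ln^{-1/2}(e+t)$, a bootstrap upgrades this to $\ln^{-l}$ for every $l$, a Gronwall argument on $N(t)=\sup_{t'\le t}(1+t')^{1/2}\bar E_0(t')$ reaches $(1+t)^{-1/2}$, and only then are $\dot B^{-1/2}_{2,\infty}$ (via $L^{4/3}$) and $\dot B^{-1}_{2,\infty}$ (via $L^1$) propagated to land on $\bar E_0\lesssim(1+t)^{-1}$ and $\bar E_1\lesssim(1+t)^{-2}$. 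Deferring all of this to ``routine bookkeeping once these rates are in hand'' inverts the logical order.

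For the integrability: your mechanism is a pointwise-in-time bound $\|f\|_{\dot H^\sigma}\lesssim(1+t)^{-\rho}$ with $\rho>1$ for some $\sigma>2$. But in the only nontrivial range $1+\beta<s\le 2$ (for $s>2$ the claim follows from Proposition \ref{4prop1} and Sobolev embedding) every admissible $\sigma>2$ exceeds $s$, so $\|u^a(t)\|_{\dot H^\sigma}$ lies above the solution's regularity: the velocity equation has no smoothing (there is no viscosity), and the orders $s<\sigma\le s+1-\beta$ are reachable only through the time-integrated dissipation $\int_0^\infty\|\nabla u^a\|_{H^{s-\beta}}^2\,dt<\infty$. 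No Fourier splitting can produce a pointwise rate for a norm the solution need not possess at fixed time. The paper's Proposition \ref{4prop4} sidesteps this: it interpolates $\|\nabla u^a\|_{L^\infty}\lesssim\|u^a\|_{L^2}^{\frac{s-\beta-1}{s-\beta+1}}\|\Lambda^{s-\beta+1}u^a\|_{L^2}^{\frac{2}{s-\beta+1}}$ and controls the high factor only in square mean, via the time-weighted dissipation bound $(1+t)^{\delta}\bar E_1+\int_0^t(1+t')^{\delta}\bar D_1\,dt'\le C$ for $\delta<2$ (a consequence of $\frac{d}{dt}\bar E_1+\bar D_1\le 0$ and $\bar E_1\lesssim(1+t)^{-2}$), followed by Young's inequality in time; the strict inequality $s>1+\beta$ enters by making the exponent on $\|u^a\|_{L^2}$ positive and leaving room for a weight $\delta\in(\frac{s-\beta+1}{2},2)$. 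You correctly diagnosed that naive interpolation lands on a non-integrable exponent, but the repair is weighted-in-time square integrability of the dissipation, not pointwise decay of supercritical norms.
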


The following result focus on the uniform-in-time vanishing damping rate of (\ref{eq2}).
  \begin{theo}\label{1theo5}
  Let $d=2,\ a\in [0,1]$, $\frac{1}{2}\le\beta<1$ and $s> 1+\beta$. Let $(u^a,\tau^a)$  be a strong solution of (\ref{eq2}) with the initial data $(u_{0},\tau_{0})\in H^s\cap\dot{B}_{2, \infty}^{-1}$ under the condition in Theorem \ref{1theo1}. When $\frac{1}{2}<\beta<1$, the following uniform-in-time vanishing damping rate holds
  \begin{align*}
   \|\nabla^{\alpha}( u^{a}- u^{0},\tau^{a}-\tau^{0})\|_{L^{\infty}\left([0,\infty),L^{2}\right)}\leq Ca ^{\frac{\beta(1+\alpha)}{\alpha\beta+3\beta-1}},
  \end{align*}
 where $\alpha\in[0,1]$. 
  When $\beta=\frac{1}{2}$, we have
  \begin{align*}
  \|\nabla^{\alpha}( u^{a}-u^{0},\tau^{a}-\tau^{0})\|_{L^{\infty}([0,\infty),L^{2})}\leq Ca ^{1-\varepsilon},
  \end{align*}  
for any $0<\varepsilon<1$ and $\alpha\in[0,1]$.
\end{theo}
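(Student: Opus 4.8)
The plan is to estimate the difference $(w,\sigma):=(u^a-u^0,\tau^a-\tau^0)$ directly. Subtracting the two copies of \eqref{eq2} (the one with damping $a$ and the one with $a=0$), the equations for $(w,\sigma)$ form a linearized-type system: transport by $u^a$, reaction terms $w\cdot\nabla u^0$ and $w\cdot\nabla\tau^0$, the bilinear remainders $Q(\nabla w,\tau^a)+Q(\nabla u^0,\sigma)$, the fractional dissipation $(-\Delta)^{\beta}\sigma$, and — crucially — the damping contribution $a\tau^a$. The decisive observation is to split $a\tau^a=a\sigma+a\tau^0$: the term $a\sigma$ is linear in the unknown and harmless (it can be absorbed into the energy and in fact only helps), while $a\tau^0$ is a genuine external forcing whose size is governed by the optimal decay of the limit solution. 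By Theorem \ref{1theo2}, $\|\Lambda^{s_1}(u^0,\tau^0)\|_{L^2}\lesssim(1+t)^{-(s_1+1)/(2\beta)}$, so the forcing decays like $(1+t)^{-1/(2\beta)}$ in $L^2$, faster than the $a$-uniform rate of $\tau^a$ itself; this is what makes the final exponent sharp.

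Since the forcing is small in $a$ but only mildly decaying (hence not integrable in time once $\beta\le 1$), I would not integrate on all of $[0,\infty)$, but split time as $[0,\infty)=[0,T]\cup[T,\infty)$ with $T=T(a)$ optimized at the end. On the tail $[T,\infty)$ I use the triangle inequality $\|\Lambda^{\alpha}(w,\sigma)\|_{L^2}\le\|\Lambda^{\alpha}(u^a,\tau^a)\|_{L^2}+\|\Lambda^{\alpha}(u^0,\tau^0)\|_{L^2}$ together with the decay estimates: Theorem \ref{1theo2} for the limit solution, and interpolation of the two bounds in Theorem \ref{1theo3}, which gives $\|\Lambda^{\alpha}(u^a,\tau^a)\|_{L^2}\lesssim(1+t)^{-(1+\alpha)/2}$ for $\alpha\in[0,1]$, uniformly in $a$. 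Because $\beta<1$, the $a$-solution decays the slowest, so the tail is controlled by $(1+T)^{-(1+\alpha)/2}$. This is the only place where the exponent $\alpha$ enters.

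On the head $[0,T]$ I would run an energy estimate for $(w,\sigma)$ at the top regularity, using the same coupled Lyapunov functional $\|(w,\sigma)\|_{H^{s}}^{2}+2k\langle-\nabla w,\sigma\rangle_{H^{s-\beta}}$ as in Theorem \ref{1theo1}; the cross term supplies the missing coercivity for $\nabla w$ (the velocity equation has no dissipation) and absorbs the $Q(\nabla w,\tau^a)$ contributions against the fractional dissipation $\|\Lambda^{s+\beta}\sigma\|_{L^2}$. The transport and reaction terms are closed by Gronwall, the coefficients being integrable in time thanks to $\int_0^\infty\|\nabla(u^a,\tau^a)\|_{L^\infty}+\|\nabla(u^0,\tau^0)\|_{L^\infty}\,dt<\infty$ from Theorem \ref{1theo3} (the $a=0$ case being a specialization). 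The forcing contributes $a\|\tau^0\|_{H^{s}}$, which is dominated by its low-frequency part $a\|\tau^0\|_{L^2}\lesssim a(1+t)^{-1/(2\beta)}$; integrating yields $\|(w,\sigma)(t)\|_{H^{s}}\lesssim a(1+T)^{1-1/(2\beta)}$ for $\tfrac12<\beta<1$ and $\lesssim a\log(1+T)$ for $\beta=\tfrac12$. Since $\alpha\le 1<s$, this bounds $\|\Lambda^{\alpha}(w,\sigma)\|_{L^2}$ on $[0,T]$ by the same $\alpha$-independent quantity.

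Finally I optimize $T$. Equating head and tail, $a(1+T)^{1-1/(2\beta)}=(1+T)^{-(1+\alpha)/2}$ gives $(1+T)=a^{-2\beta/(\alpha\beta+3\beta-1)}$ and hence the rate $a^{\beta(1+\alpha)/(\alpha\beta+3\beta-1)}$ claimed for $\tfrac12<\beta<1$; for $\beta=\tfrac12$ the logarithm in the head forces the rate $a^{1-\varepsilon}$ for every $\varepsilon>0$. The main obstacle is the head estimate: reproducing, for the difference system and uniformly in $a$, the delicate top-order closure behind Theorem \ref{1theo1} — in particular handling the $Q(\nabla w,\tau^a)$ terms with only the fractional dissipation available, and verifying that the forcing is genuinely dominated by its slowly decaying $L^2$ component $a(1+t)^{-1/(2\beta)}$ rather than the faster-decaying high-frequency part. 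It is precisely this $\alpha$-independence of the head, combined with the $\alpha$-dependent tail, that produces the exact exponent.
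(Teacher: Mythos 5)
Your overall architecture coincides with the paper's proof (Proposition \ref{4prop5} together with Remark \ref{4reme8}): form the difference system for $(w,\sigma)=(u^a-u^0,\tau^a-\tau^0)$, split $a\tau^a=a\sigma+a\tau^0$ so that $a\sigma$ is harmless and the genuine forcing $a\tau^0$ decays like $a(1+t)^{-\frac{1}{2\beta}}$ by Theorem \ref{1theo2}, close a coercive energy estimate by Gronwall using the $L^1_t$ coefficients from Theorem \ref{1theo3}, obtain the head bound $a(1+t)^{1-\frac{1}{2\beta}}$ (resp.\ $a\log(1+t)$ for $\beta=\tfrac12$), and play it against the $\alpha$-dependent uniform decay $(1+t)^{-\frac{1+\alpha}{2}}$ on the tail, optimizing the splitting time $1+T=a^{-\frac{2\beta}{\alpha\beta+3\beta-1}}$. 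Both exponent computations are correct and identical to the paper's.

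The genuine gap is in your head estimate: you propose to run it at top regularity, with the functional $\|(w,\sigma)\|_{H^{s}}^{2}+2k\langle-\nabla w,\sigma\rangle_{H^{s-\beta}}$, and this cannot close when $\tfrac12<\beta<1$, because the difference of two solutions of a quasilinear system loses a derivative at the solutions' own regularity. Concretely, the reaction term $w\cdot\nabla u^0$ in the velocity difference equation produces $\langle\Lambda^{s}(w\cdot\nabla u^0),\Lambda^{s}w\rangle$; writing $\Lambda^{s}(w\cdot\nabla u^0)=[\Lambda^{s},w\cdot\nabla]u^0+w\cdot\nabla\Lambda^{s}u^0$, the commutator is fine, but the remaining term costs either $\|w\|_{L^\infty}\|\Lambda^{s+1}u^0\|_{L^2}$ directly, or, after integrating by parts with $\mathrm{div}\,w=0$, $\|\Lambda^{s+1}w\|_{L^2}$. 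Neither is available: $u^0$ and $w$ lie only in $H^{s}$, the velocity equation has no parabolic smoothing, and the cross-term coercivity (Theorem \ref{1theo1}, Proposition \ref{3prop1}) controls at most $\|\Lambda^{s+1-\beta}(u^0,w)\|_{L^2}$ in $L^2_t$; even rebalancing the pairing optimally against that dissipation, i.e.\ estimating $\langle\Lambda^{s-(1-\beta)}(w\cdot\nabla u^0),\Lambda^{s+1-\beta}w\rangle$, still demands $\|\Lambda^{s+\beta}u^0\|_{L^2}$, and $s+\beta>s+1-\beta$ exactly when $\beta>\tfrac12$, i.e.\ in the main case of the theorem. (The companion term $w\cdot\nabla\tau^0$ is rescued by the stress dissipation since $s+1-\beta\le s+\beta$; the velocity term is not.) This is precisely why the paper performs the difference estimate at the $H^1$ level, with $\phi(t)=\|(w,\sigma)\|_{L^2}+\|\nabla(w,\sigma)\|_{L^2}$ and the cross term $\langle\nabla w,\sigma\rangle_{H^{1-\beta}}$: there every background norm entering the Gronwall coefficient, namely $\|\nabla^{1+\beta}u^0\|_{L^2}$, $\|\nabla^{2-\beta}(u^a,u^0,\tau^0)\|_{L^2}$ and $\|\nabla(u^a,u^0,\tau^0)\|_{L^\infty}$, stays within the available regularity $s>1+\beta$ and is square-integrable in time by Proposition \ref{4prop1}, Proposition \ref{4prop4} and Theorem \ref{1theo2}. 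Since the theorem only asserts rates for $\nabla^{\alpha}$ with $\alpha\in[0,1]$, the $H^1$ head bound is all that is needed; replacing your $H^{s}$ estimate by this $H^1$ estimate, the remainder of your argument goes through verbatim and reproduces the paper's proof.
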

We finally concerned with the sharp uniform-in-time vanishing damping rate for $\mathrm{tr}\tau^a $.
\begin{theo}\label{1theo6}
Let $d=2,\ a\in [0,1]$, $\frac{1}{2}\le\beta<1$ and $s\ge 1+\beta$. Let $(u^a,\tau^a)$  be a strong solution of (\ref{eq2}) with the initial data $(u_{0},\tau_{0})\in H^s\cap\dot{B}_{2, \infty}^{-1}$ under the condition in Theorem \ref{1theo1}. Taking $\tilde{\tau}^a=\mathrm{tr }\tau^a $, we have
\begin{align*}
\|\tilde{\tau}^{a}\|_{L^2}\le C(1+t)^{-\frac{1}{2\beta}}.
\end{align*}
Moreover, if $\frac{1}{2}<\beta<1$ and $s>1+\beta$, then the following sharp uniform-in-time vanishing damping rate holds
\begin{align}\label{10004}
\|\tilde{\tau}^{a}-\tilde{\tau}^{0}\|_{L^{\infty}\left([0,\infty),L^{2}\right)}\le Ca^{\frac{1}{2\beta}}.
\end{align}
\end{theo}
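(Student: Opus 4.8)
The starting point is a structural reduction that is special to the trace. Writing $\tilde\tau=\mathrm{tr}\,\tau$ and taking the trace of the second equation in \eqref{eq2}, the three nonlinear quantities collapse: $\mathrm{tr}\,D(u)=\mathrm{div}\,u=0$ annihilates the linear forcing, $\mathrm{tr}(\tau\Omega(u)-\Omega(u)\tau)=0$, and $\mathrm{tr}(D(u)\tau+\tau D(u))=2\,D(u):\tau$. Hence $\tilde\tau^a$ solves the scalar equation
\begin{align*}
\partial_t\tilde\tau^a+u^a\cdot\nabla\tilde\tau^a+a\tilde\tau^a+(-\Delta)^\beta\tilde\tau^a=2b\,D(u^a):\tau^a.
\end{align*}
The crucial feature is that at the linear level this is a pure damped fractional-heat equation, completely decoupled from $u^a$: the only forcing is the quadratic term $2b\,D(u^a):\tau^a$. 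This decoupling is precisely what lets $\tilde\tau^a$ decay faster, and its difference vanish faster in $a$, than the full stress $\tau^a$.

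For the decay bound $\|\tilde\tau^a\|_{L^2}\lesssim(1+t)^{-1/(2\beta)}$ I would use the $L^2$ energy identity together with the Fourier splitting (Schonbek) method. Since $\mathrm{div}\,u^a=0$, the transport term drops out, giving
\begin{align*}
\tfrac12\tfrac{d}{dt}\|\tilde\tau^a\|_{L^2}^2+\|\Lambda^\beta\tilde\tau^a\|_{L^2}^2+a\|\tilde\tau^a\|_{L^2}^2=2b\langle D(u^a):\tau^a,\tilde\tau^a\rangle.
\end{align*}
Splitting frequencies at the scale $R(t)^{2\beta}\sim(1+t)^{-1}$ and using $\tilde\tau_0=\mathrm{tr}\,\tau_0\in\dot B^{-1}_{2,\infty}$ (so the low-frequency mass satisfies $\int_{|\xi|\le R}|\widehat{\tilde\tau}_0|^2\lesssim R^2$), the linear part produces the rate $(1+t)^{-1/(2\beta)}$ exactly as for the two-dimensional fractional heat flow; the damping $a\|\tilde\tau^a\|_{L^2}^2\ge0$ only helps and keeps everything uniform in $a\in[0,1]$. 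The quadratic forcing is harmless because, by the uniform decay of Theorem \ref{1theo3}, $\|D(u^a):\tau^a\|_{L^1}\lesssim\|\nabla u^a\|_{L^2}\|\tau^a\|_{L^2}\lesssim(1+t)^{-3/2}$, so its low-frequency contribution decays faster than the target in the time-weighted bootstrap.

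For the sharp vanishing damping rate \eqref{10004}, set $w=\tilde\tau^a-\tilde\tau^0$, so $w(0)=0$ and, after subtracting the two trace equations and using $\langle u^0\cdot\nabla w,w\rangle=0$,
\begin{align*}
\tfrac12\tfrac{d}{dt}\|w\|_{L^2}^2+\|\Lambda^\beta w\|_{L^2}^2=-a\langle\tilde\tau^a,w\rangle-\langle(u^a-u^0)\cdot\nabla\tilde\tau^a,w\rangle+2b\langle D(u^a):\tau^a-D(u^0):\tau^0,w\rangle.
\end{align*}
The driving term is $-a\langle\tilde\tau^a,w\rangle$. Keeping only this term, Duhamel's formula yields $\widehat w(\xi,t)\approx-\widehat{\tilde\tau}_0(\xi)\,e^{-|\xi|^{2\beta}t}(1-e^{-at})$, and I would bound $\sup_t\|w(t)\|_{L^2}$ by splitting frequencies at the balance scale $|\xi|\sim a^{1/(2\beta)}$, where diffusion $|\xi|^{2\beta}$ and damping $a$ coincide. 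On $|\xi|\le a^{1/(2\beta)}$ one uses $1-e^{-at}\le1$ together with $\int_{|\xi|\le a^{1/(2\beta)}}|\widehat{\tilde\tau}_0|^2\lesssim a^{1/\beta}$; on $|\xi|>a^{1/(2\beta)}$ one uses $1-e^{-at}\le at$ and $\sup_t t^2e^{-2|\xi|^{2\beta}t}\sim|\xi|^{-4\beta}$, so the dyadic sum $\sum_{2^j>a^{1/(2\beta)}}2^{(2-4\beta)j}\sim a^{(1-2\beta)/\beta}$ converges precisely because $\beta>\tfrac12$. Both pieces contribute $a^{1/\beta}$ to $\|w\|_{L^2}^2$, reproducing the rate $a^{1/(2\beta)}$ and explaining why the borderline case $\beta=\tfrac12$ (logarithmic divergence of the high-frequency sum) is excluded from \eqref{10004}.

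The main obstacle is to show that the two nonlinear terms in the difference identity do not degrade this rate. Both involve the full differences $u^a-u^0$ and $\tau^a-\tau^0$, which by Theorem \ref{1theo5} vanish only like $a^{\beta/(3\beta-1)}$ at the $L^2$ level, a strictly worse rate than the target $a^{1/(2\beta)}$ when $\tfrac12<\beta<1$. Recovering the sharp trace rate therefore requires extracting extra smallness from the quadratic structure and the time behaviour. The gradient-level difference $\|\nabla(u^a-u^0,\tau^a-\tau^0)\|_{L^2}\lesssim a^{2\beta/(4\beta-1)}\le a^{1/(2\beta)}$ (Theorem \ref{1theo5} with $\alpha=1$) is already at the right order, so the pieces of the difference carrying a gradient of the difference are fine; the pieces carrying the slower $L^2$ difference must be controlled by combining the fast decay $\|\nabla u^0\|_{L^2}\lesssim(1+t)^{-1/\beta}$ and $\|\tau^a\|_{L^2}\lesssim(1+t)^{-1/2}$ from Theorems \ref{1theo2}--\ref{1theo3}, the time integrability $\int_0^\infty\|\nabla(u,\tau)\|_{L^\infty}\,dt\le C$, and the fact that $u^a-u^0$ and $\tau^a-\tau^0$ vanish at $t=0$ and grow only like $at$ for short times. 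Matching these against the semigroup smoothing at the same frequency scale $a^{1/(2\beta)}$ is the technical heart of the argument; once the nonlinear contributions are shown to be $O(a^{1/(2\beta)})$, a continuity/bootstrap argument closes the uniform-in-time estimate \eqref{10004}.
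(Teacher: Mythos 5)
Your reduction of the trace equation and your linear computation for \eqref{10004} (Duhamel with only the forcing $-a\tilde\tau^a$, frequency splitting at $|\xi|\sim a^{1/(2\beta)}$, convergence of the high-frequency dyadic sum precisely when $\beta>\frac12$) are correct, but the proof is not complete: you explicitly defer the nonlinear terms to ``the technical heart of the argument'' and never supply that step, and the route you propose for it is both harder than necessary and not obviously closable, for the very reason you point out (the $L^2$-level differences vanish only like $a^{\frac{\beta}{3\beta-1}}$, strictly worse than $a^{\frac{1}{2\beta}}$ for $\frac12<\beta<1$). The paper's proof of \eqref{10004} (Corollary \ref{4coro13}) needs no uniform-in-time nonlinear difference estimate at the sharp rate and no frequency analysis of $w$ at all: it splits \emph{time}, not frequency, at the scale $at\sim1$. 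For $at\le1$ it invokes the bound \eqref{4518}, $\phi(t)=\|(u^a-u^0,\tau^a-\tau^0)\|_{L^2}+\|\nabla(u^a-u^0,\tau^a-\tau^0)\|_{L^2}\lesssim a(1+t)^{1-\frac{1}{2\beta}}$, already established by Gronwall in the proof of Theorem \ref{1theo5}; since $t\le a^{-1}$ and $1-\frac{1}{2\beta}>0$, this is $\lesssim a^{\frac{1}{2\beta}}$. For $at\ge1$ it uses the triangle inequality together with the first part of the theorem (Proposition \ref{4prop12}, which is uniform in $a\in[0,1]$, including $a=0$): $\|\tilde\tau^a-\tilde\tau^0\|_{L^2}\le\|\tilde\tau^a\|_{L^2}+\|\tilde\tau^0\|_{L^2}\lesssim(1+t)^{-\frac{1}{2\beta}}\le a^{\frac{1}{2\beta}}$. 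You list exactly these two ingredients (the short-time growth like $at$ and the decay rates) but never assemble them this way; once assembled, nothing remains to be proved. (A sharp nonlinear estimate of the kind you envisage is in fact feasible --- the paper carries out a version of it in Proposition \ref{4prop14} for the \emph{lower} bound, using $\|\nabla^{\frac1\beta-1}(u^a-u^0)\|_{L^2}\lesssim a^{\frac{1}{2\beta}}$, i.e.\ Theorem \ref{1theo5} with $\alpha=\frac1\beta-1$ --- but it is not needed for the upper bound.)

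There is also a quantitative gap in your argument for the first assertion, $\|\tilde\tau^a\|_{L^2}\lesssim(1+t)^{-\frac{1}{2\beta}}$. The claim that the quadratic forcing is harmless ``because $\|D(u^a):\tau^a\|_{L^1}\lesssim(1+t)^{-3/2}$'' does not suffice in a Fourier-splitting scheme: the target for the low-frequency mass is $\int_{S(t)}|\widehat{\tilde\tau}|^2d\xi\lesssim(1+t)^{-\frac1\beta}$, whereas Cauchy--Schwarz on $S(t)$ only yields $\int_{S(t)}\int_0^t|\widehat Q\,||\widehat{\tilde\tau}\,|\,dt'\,d\xi\lesssim|S(t)|^{\frac12}\int_0^t\|Q\|_{L^1}\|\tilde\tau\|_{L^2}\,dt'\lesssim(1+t)^{-\frac{1}{2\beta}}$, so the bootstrap stalls at $\|\tilde\tau^a\|_{L^2}\lesssim(1+t)^{-\frac{1}{4\beta}}$. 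To reach the full rate you must additionally use a uniform-in-time bound $\|\tilde\tau^a\|_{\dot B^{-1}_{2,\infty}}\le C$ (available from Step 4 of the proof of Proposition \ref{4prop1}, since $|\widehat{\mathrm{tr}\,\tau}|\le\sqrt2\,|\widehat\tau|$), which gives $\bigl(\int_{S(t)}|\widehat{\tilde\tau}(t')|^2d\xi\bigr)^{1/2}\lesssim(1+t)^{-\frac{1}{2\beta}}$, combined with $\int_0^\infty\|Q\|_{L^1}dt<\infty$; alternatively, follow the paper's Proposition \ref{4prop12}, which avoids Fourier splitting altogether and runs Duhamel with the semigroup bound of Lemma \ref{lemma2} and the convolution inequality of Lemma \ref{lemma7}, for which $\|Q_1\|_{L^1}+\|Q_1\|_{L^2}\lesssim(1+t)^{-\frac32}$ is enough.
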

\begin{rema}
  The uniform-in-time $L^2$ vanishing damping rate (\ref{10004}) is sharp in the sense that we can
find initial data satisfying our conditions such that
\begin{align*}
\|\tilde{\tau}^{a}-\tilde{\tau}^{0}\|_{L^{\infty}\left([0,\infty),L^{2}\right)}\ge Ca^{\frac{1}{2\beta}}.
\end{align*}
One can see Proposition \ref{4prop14} for more details.
\end{rema}

\subsection{Motivations and main ideas}

The well-posedness and large time behavior of (\ref{eq2}) have been widely studied. We firstly consider critical global reguarity in Sobolev space.  Moreover, the uniform vanishing damping limit for (\ref{eq2}) with $\frac{1}{2}\le\beta<1$ has not been discussed. The another aim of this paper is to investigate this problem. 
Then we divide the results of global theory into the following two cases. Let $(u^a,\tau^a)$ be a solution of (\ref{eq2})

\textbf{Case 1: Global regularity and Optimal time decay rates.}

We firstly establish global regularity for (\ref{eq2}). Compared with \cite{P.Constantin,Wu}, we extend the range of initial value corresponding to the global solutions with the case $\frac{1}{2}\le\beta<1$. Thanks to $\frac{1}{2}\le\beta<1$, we observe that when $s=2\beta$,
\begin{align*}
\langle [\Lambda^{2\beta}, u^a\cdot\nabla]u^a,\Lambda^{2\beta} u^a \rangle&\lesssim\ \|\Lambda^{2\beta}u^a\|_{L^{\frac{2}{\beta}}}^{2}\|\nabla u^a\|_{L^{\frac{1}{1-\beta}}}
\lesssim\ \|\Lambda^{\beta+1}u^a\|_{L^2}^{2}\|u^a\|_{H^{2\beta}}.
\end{align*}
Note that the above estimate fails for $\beta=1$. When $\beta=\frac{1}{2}$, we need to dealt with $\|(u^a,\tau^{a})\|_{L^\infty}$, which can be bounded by $\|(u^a,\tau^a)\|_{H^{s}}$ with $s>1=2\beta$. Then we can prove Theorem \ref{1theo1} by virtue of energy estimate method and the bootstrap argument. 

When $a=0$, we add a extra term $\left\langle\tau^0,-\Lambda^{-2+2 \beta} \nabla u^0\right\rangle$ to derive lower order energy dissipation estimate for $u^0$. By virtue of the improved Fourier splitting method, we obtain initial decay rate 
\begin{align*}
\|(u^0,\tau^0)\|_{H^s}\lesssim(1+t)^{-\frac{1}{4\beta}+\frac{1}{4}}.
\end{align*}
Then we can establish the $H^{\beta}$ optimal time decay rate by iteration techniques. For highest order time decay rate, we construct time weighted energy functional and dissipation functional, which help us close the energy estimate. By virtue of Fourier splitting method and time weighted energy estimate, one can derive the upper bound of decay rate for the highest derivative. Finally, we introduce a new weighted energy estimate instead of complex
spectral analysis to prove the lower bound of the decay rate.

\textbf{Case 2: The uniform vanishing damping limit.}

\textbf{(1) Establishing uniform time decay rate for (\ref{eq2}) with $a\in(0,1]$.} 

For $a\in (0,1]$, if we add the same term $\left\langle\tau^a,-\Lambda^{-2+2 \beta} \nabla u^a\right\rangle$, we found
\begin{align*}
&\frac{d}{dt}\left\langle\tau^a,-\Lambda^{-2+2 \beta} \nabla u^a\right\rangle+\frac{1}{2}\|\Lambda^{\beta} u^a\|_{L^{2}}^{2} \\ \nonumber
=&\left\langle G-\Lambda^{2 \beta} \tau^a,-\Lambda^{-2+2 \beta} \nabla u^a\right\rangle-\left\langle \mathbb{P}(F+\operatorname{div} \tau^a),\Lambda^{-2+2 \beta} \operatorname{div} \tau^a\right\rangle-a\left\langle \tau^a,\Lambda^{-2+2 \beta} \nabla u^a\right\rangle. \nonumber
\end{align*}
The first two terms on the right-hand side can be controlled. However, one can see that
\begin{align*}
a\left\langle \tau^a,-\Lambda^{-2+2 \beta} \nabla u^a\right\rangle\lesssim a\|\tau^a\|_{L^2}\|\Lambda^{-2+2 \beta} \nabla u^a\|_{L^2}.
\end{align*}
The term $\|\Lambda^{-2+2 \beta} \nabla u^a\|_{L^2}$  can not be bounded by $\|\nabla u^a\|_{H^{s-\beta}}$. Thus, we fail to get the optimal time decay rate for (\ref{eq2}) with $a\in (0,1]$ by virtue of similar method in Theorem \ref{1theo2}.

However, we can derive initial time decay rate  
\begin{align*}
\|(u^a,\tau^a)\|_{L^2}\lesssim\ln^{-\frac{1}{2}}(e+t).
\end{align*}
By virtue of the improved Fourier splitting method again, one can improve the time decay rate. Finally, we obtain  
 \begin{align*}
 \|(u^{a},\tau^{a})\|_{L^{2}}+(1+t)^{\frac{1}{2}}\|\nabla(u^{a},\tau^{a})\|_{L^ {2}}\leq C(1+t)^{-\frac{1}{2}}.
 \end{align*}
 If additionally $s>1+\beta$, we have
 \begin{align*}
 \int_{0}^{\infty}\|\nabla (u^{a},\tau^{a})\|_{L^{\infty}}dt\leq C.
 \end{align*}

\textbf{(2) Demonstrating the uniform vanishing damping rate.} 

In \cite{CLYY}, we find that the uniform $L^2$ vanishing damping rate depend on the optimal decay rate of $\|(u^a,\tau^a)\|_{H^1}$ and global time integrability for $\|\nabla (u^a,\tau^a)\|_{L^\infty}$. Unlike the case with $\beta=1$, we cannot close  energy estimate in $L^2$. Fortunately, we can achieve the $H^1$ energy estimate. For $\frac{1}{2}<\beta<1$, there holds
\begin{align*}
  \|(u^a-u^0,\tau^a-\tau^0)\|_{H^1}\lesssim a(1+t)^{-\frac{1}{2\beta}}.
\end{align*}
Moreover, using the time decay rate in $H^1$ and global time integrability, for any $\alpha\in [0,1]$, we conclude that
 $$ \|(u^a-u^0,\tau^a-\tau^0)\|_{\dot{H}^{\alpha}}\lesssim  a^{\frac{\beta(1+\alpha)}{\alpha\beta+3\beta-1}}.$$ 
There exists a little difference when dealing with the case with $\beta=\frac{1}{2}$. We infer that
\begin{align*}
   \|(u^{a}- u^{0},\tau^{a}-\tau^{0})\|_{H^{1}}\lesssim a\mathrm{log}(1+t).
\end{align*}
The presence of the logarithm leads to a loss of $a$, then we deduce the uniform vanish damping rate is $a^{1-}$.

\textbf{(3) Demonstrating the optimal uniform vanishing damping rate for $\mathrm{tr}\tau^a$.} 

Thanks to $\mathrm{div}~Du^a=0$, considering the trace of the equation (\ref{eq2}), we infer that $\tilde{\tau}^{a}=\mathrm{tr}\tau^a$ satisfies the fractional heat equation:
\begin{align*}
\partial_{t}\tilde{\tau}^{a}+(-\Delta)^{\beta}\tilde{\tau}^{a}+a\tilde{\tau}^{a}=Q_{1}(u ^{a},\nabla\tau^{a})+Q_{2}(\nabla u^{a},\tau^{a}).
\end{align*}
where $Q_1$ and $Q_2$ are quadratic terms. By virtue of spectral analysis, we know that the low frequencies of $\tilde{\tau}^{a}$ behave as fractional diffusion, the high frequencies of $\tilde{\tau}^{a}$ should be damped. Then we obtain the $L^2$ optimal time decay rate of $\tilde{\tau}^{a}$ with $\frac{1}{2}\le\beta<1$. Then we refine the $L^2$ uniform vanishing damping rate for $\tilde{\tau}^{a}$ with $\frac{1}{2}<\beta<1$.

Finally, we construct initial data $(u_0,\tau_0)$ satisfying the conditions of Theorem \ref{1theo3} such that the lower bound holds
\begin{align*}
\|\tilde{\tau}^{a}-\tilde{\tau}^{0}\|_{L^2}\gtrsim a^{\frac{1}{2\beta}},
\end{align*}
for $\frac{1}{2}<\beta<1$. This means that the improved vanishing damping rate for $\mathrm{tr}\tau^a$ is optimal. 

Several related questions are remaining open after this work. For the endpoint case with $\beta=\frac{1}{2}$ and $H^1$ initial data, the global regularity for (\ref{eq2}) remains unresolved. The corresponding linearized system of (\ref{eq2}) for $\beta=\frac{1}{2}$  is as follows:
    	\begin{align*}
		\left\{\begin{array}{l}
			\partial_tu + \nabla {\rm P} = {\rm div}~\tau,~~~~{\rm div}~u=0,\\[1ex]
			\partial_t\tau + \Lambda\tau= D(u).
		\end{array}\right.
	\end{align*}
The type of (\ref{eq2}) changes from parabolic equation to dispersion equation. Hence, the case with $\beta=\frac{1}{2}$ is more challenging than the case with $\frac{1}{2}<\beta<1$.

{\rm\textbf{Structure of the paper:}}~~The rest of this paper is divided into four sections. In Section 2, we give some preliminaries which will be used in the sequel. In Section 3, we prove the global regularity for (\ref{eq2}). In Section 4, we will focus on optimal time decay rate for the (\ref{eq2}) with $a=0$. In Section 5, we study the uniform vanishing damping limit for (\ref{eq2}).

	\section{Preliminaries}
\par
	In this section, we introduce some notations and useful lemmas which will be used in the sequel.
	
	We agree that $f\lm g$ represents $f\leq Cg$ with a constant $C>0$. The symbol $\widehat{f}=\mathscr{F}(f)$ stands for the Fourier transform of $f$. Denote $\mathscr{F}^{-1}(f)$ the inverse Fourier transform of $f$.
	
	The Littlewood-Paley decomposition theory and Besov spaces are given as follows.
	\begin{lemm}\cite{Bahouri2011}\label{lemma1}
		Let $\mathscr{C}$ be the annulus $\{\xi\in\mathbb{R}^d:\frac 3 4\leq|\xi|\leq\frac 8 3\}$. There exists a radial functions $\varphi$, valued in the interval $[0,1]$, belonging to $\mathscr{D}(\mathscr{C})$, and such that
		$$ \forall\xi\in\mathbb{R}^2\backslash\{0\},\ \sum_{j\in\mathbb{Z}}\varphi(2^{-j}\xi)=1,~~~ $$
		$$ |j-j'|\geq 2\Rightarrow\mathrm{Supp}\ \varphi(2^{-j}\cdot)\cap \mathrm{Supp}\ \varphi(2^{-j'}\cdot)=\emptyset, $$
		Moreover, we have
		$$ \forall\xi\in\mathbb{R}^2\backslash\{0\},\ \frac 1 2\leq\sum_{j\in\mathbb{Z}}\varphi^2(2^{-j}\xi)\leq 1.~~ $$
	\end{lemm}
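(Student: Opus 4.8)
The plan is to obtain $\varphi$ as a dyadic difference of a single radial low-pass cutoff, which is the standard Littlewood--Paley construction. First I would fix a radial function $\chi\in\mathscr{D}(\mathbb{R}^d)$, valued in $[0,1]$ and radially non-increasing, that equals $1$ on the closed ball $\overline{B(0,3/4)}$ and is supported in $\overline{B(0,4/3)}$; such a $\chi$ exists by mollifying the indicator of an intermediate ball. I would then set
\[
\varphi(\xi):=\chi(\xi/2)-\chi(\xi).
\]
Because $8/3=2\cdot(4/3)$, the outer support of $\chi(\cdot/2)$ reaches exactly $|\xi|=8/3$, while $\varphi$ vanishes on $B(0,3/4)$ (there both terms equal $1$) and outside $B(0,8/3)$ (there both terms vanish); hence $\mathrm{Supp}\,\varphi\subset\mathscr{C}$ and $\varphi\in\mathscr{D}(\mathscr{C})$. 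Monotonicity of $\chi$ gives $\chi(\xi/2)\geq\chi(\xi)$ since $|\xi/2|\leq|\xi|$, so $\varphi\geq0$, and $\varphi\leq\chi(\xi/2)\leq1$; thus $\varphi$ takes values in $[0,1]$.

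Next I would verify the partition-of-unity identity by telescoping. Writing $a_j:=\chi(2^{-j}\xi)$, one has $\varphi(2^{-j}\xi)=a_{j+1}-a_j$, so for $\xi\neq0$
\[
\sum_{j=-N}^{M}\varphi(2^{-j}\xi)=a_{M+1}-a_{-N}=\chi(2^{-M-1}\xi)-\chi(2^{N}\xi).
\]
Letting $M\to+\infty$ gives $\chi(2^{-M-1}\xi)\to\chi(0)=1$ by continuity, and letting $N\to+\infty$ gives $\chi(2^{N}\xi)\to0$ since $|2^{N}\xi|\to\infty$ and $\chi$ is compactly supported; the series therefore sums to $1$. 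For the support separation I would note that $\mathrm{Supp}\,\varphi(2^{-j}\cdot)=\{\,2^{j}\tfrac34\leq|\xi|\leq 2^{j}\tfrac83\,\}$, so two such dyadic annuli for indices $j<j'$ meet only if $2^{j'-j}\tfrac34\leq\tfrac83$, i.e.\ $2^{j'-j}\leq\tfrac{32}{9}<4=2^{2}$; hence $|j-j'|\geq2$ forces disjoint supports.

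Finally I would establish the pinching $\tfrac12\leq\sum_j\varphi^2(2^{-j}\xi)\leq1$. The key consequence of the previous step is that at any fixed $\xi\neq0$ at most two terms $\varphi(2^{-j}\xi)$ are nonzero, and when two are nonzero their indices are consecutive. If a single term is nonzero it equals $1$, giving $\sum\varphi^2=1$. If two consecutive terms $a,b\in[0,1]$ are nonzero, then $a+b=1$ by the partition identity, so
\[
\sum_j\varphi^2(2^{-j}\xi)=a^2+b^2=a^2+(1-a)^2=2\Bigl(a-\tfrac12\Bigr)^2+\tfrac12\geq\tfrac12,
\]
while $a^2+b^2\leq(a+b)^2=1$ since $a,b\geq0$; this yields both bounds. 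No step presents a genuine obstacle, as this is a classical construction: the only points requiring care are choosing the radii so that $8/3=2\cdot 4/3$ aligns the outer support with $\mathscr{C}$, imposing radial monotonicity on $\chi$ to keep $\varphi\in[0,1]$, and counting overlaps to reduce the sum-of-squares bound to the elementary two-term inequality above. The construction is dimension-independent, so it applies verbatim for $d=2$ as used in the statement.
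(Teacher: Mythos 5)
Your proposal is correct and coincides with the standard construction in the cited reference \cite{Bahouri2011}, which is exactly how the paper handles this lemma (it is quoted without proof): a telescoping dyadic difference $\varphi(\xi)=\chi(\xi/2)-\chi(\xi)$ of a radial, radially non-increasing cutoff, with the annulus geometry $2^{j'-j}\cdot\tfrac34>\tfrac83$ for $|j-j'|\ge2$ giving the almost-orthogonality of supports and the two-term identity $a^{2}+(1-a)^{2}=2\left(a-\tfrac12\right)^{2}+\tfrac12$ giving the pinching $\tfrac12\le\sum_{j}\varphi^{2}(2^{-j}\xi)\le1$. All steps check out, so nothing further is needed.
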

			
		Let $u$ be a tempered distribution in $\mathcal{S}_{h}'(\mathbb{R}^2)$. For all $j\in\mathbb{Z}$, define
		The homogeneous operators are defined by
		$$\dot{\Delta}_j u=\mathscr{F}^{-1}(\varphi(2^{-j}\cdot)\mathscr{F}u).$$
Then the Littlewood-Paley decomposition is given as follows:
$$
u=\sum_{j\in\mathbb{Z}}\dot{\Delta}_j u\quad  \mathrm{in}\quad  \mathcal{S}_{h}'(\mathbb{R}^2). 
$$		
		Let $s\in\mathbb{R}$ and $(p,r)\in[1,\infty]^2$. The homogeneous Besov space $\dot{B}^s_{p,r}$ is given as follows
	$$ \dot{B}^s_{p,r}=\left\{u\in \mathcal{S}_{h}'(\mathbb{R}^2):\|u\|_{\dot{B}^s_{p,r}}=\Big\|(2^{js}\|\dot{\Delta}_j u\|_{L^p})_j \Big\|_{l^r(\mathbb{Z})}<\infty\right\}.$$

We introduce the Gagliardo-Nirenberg inequality for
$d = 2$.
\begin{lemm}\label{lemma3}\cite{1959On}
For $d=2$, $p\in[2,+\infty)$ and $0\leq s,s_{1}\leq s_{2}$, there holds
\begin{align*}
\|\Lambda^{s}f\|_{L^{p}}\leq C\|\Lambda^{s_{1}}f\|_{L^{2}}^{1- \theta}\|\Lambda^{s_{2}}f\|_{L^{2}}^{\theta},
\end{align*}
where $0\leq\theta\leq 1$ and satisfies  
$$s+1-\frac{2}{p}=(1-\theta)s_{1}+\theta s_{2}.$$ 
Note that we also require that $0<\theta<1, 0\leq s_{1}\leq s$, when $ p=\infty$.
\end{lemm}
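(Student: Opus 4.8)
The plan is to reduce the inequality to two standard ingredients by introducing the intermediate exponent
$$\sigma:=s+1-\tfrac{2}{p}=(1-\theta)s_1+\theta s_2,$$
the second equality being exactly the scaling hypothesis. It then suffices to prove (i) the homogeneous Sobolev embedding $\|\Lambda^{s}f\|_{L^{p}}\lesssim\|\Lambda^{\sigma}f\|_{L^{2}}$ for $2\le p<\infty$, and (ii) the frequency-space interpolation $\|\Lambda^{\sigma}f\|_{L^{2}}\le\|\Lambda^{s_1}f\|_{L^2}^{1-\theta}\|\Lambda^{s_2}f\|_{L^2}^{\theta}$. For (i) I would apply the embedding to $g=\Lambda^{s}f$: writing $\sigma'=1-\tfrac{2}{p}\in[0,1)$ so that $\sigma=s+\sigma'$, the claim becomes $\|g\|_{L^{p}}\lesssim\|\Lambda^{\sigma'}g\|_{L^{2}}$, i.e.\ the classical embedding $\dot H^{\sigma'}(\mathbb R^{2})\hookrightarrow L^{p}$, which holds precisely in the range $2\le p<\infty$. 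I would establish it from the Littlewood--Paley decomposition of Lemma \ref{lemma1} together with Bernstein's inequality $\|\dot\Delta_j g\|_{L^{p}}\lesssim 2^{2j(\frac12-\frac1p)}\|\dot\Delta_j g\|_{L^{2}}=2^{j\sigma'}\|\dot\Delta_j g\|_{L^{2}}$, summing the resulting $\ell^{2}$-bound over $j$.

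The interpolation step (ii) is elementary. By Plancherel,
$$\|\Lambda^{\sigma}f\|_{L^{2}}^{2}=\int_{\mathbb R^{2}}|\xi|^{2(1-\theta)s_1+2\theta s_2}\,|\widehat f(\xi)|^{2}\,d\xi,$$
and splitting $|\widehat f|^{2}=|\widehat f|^{2(1-\theta)}|\widehat f|^{2\theta}$ and applying Hölder's inequality with conjugate exponents $\tfrac1{1-\theta},\tfrac1\theta$ gives
$$\|\Lambda^{\sigma}f\|_{L^{2}}^{2}\le\Big(\int_{\mathbb R^{2}}|\xi|^{2s_1}|\widehat f|^{2}\,d\xi\Big)^{1-\theta}\Big(\int_{\mathbb R^{2}}|\xi|^{2s_2}|\widehat f|^{2}\,d\xi\Big)^{\theta}=\|\Lambda^{s_1}f\|_{L^{2}}^{2(1-\theta)}\|\Lambda^{s_2}f\|_{L^{2}}^{2\theta}.$$
Taking square roots and chaining with (i) yields the stated inequality for all $2\le p<\infty$ and every admissible $\theta\in[0,1]$ (the degenerate case $s_1=s_2$ being trivial).

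The case $p=\infty$ is where the main obstacle lies, and it is the reason for the extra hypothesis $0<\theta<1$: here $\sigma'=1$ is the critical exponent, the embedding $\dot H^{1}(\mathbb R^{2})\hookrightarrow L^{\infty}$ fails, and the two-step argument collapses. I would instead work directly with the Littlewood--Paley series. Using the endpoint Bernstein bound $\|\dot\Delta_j\Lambda^{s}f\|_{L^{\infty}}\lesssim 2^{j}\|\dot\Delta_j\Lambda^{s}f\|_{L^{2}}\approx 2^{j(s+1)}\|\dot\Delta_j f\|_{L^{2}}$ together with $\|\Lambda^{s}f\|_{L^{\infty}}\le\sum_{j}\|\dot\Delta_j\Lambda^{s}f\|_{L^{\infty}}$, the problem reduces to controlling $\sum_{j}2^{j(s+1)}\|\dot\Delta_j f\|_{L^{2}}$, where now $s+1=\sigma$. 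Splitting the sum at a threshold $j=N$ and applying Cauchy--Schwarz, the low-frequency part is bounded by $2^{N(\sigma-s_1)}\|\Lambda^{s_1}f\|_{L^{2}}$ and the high-frequency part by $2^{N(\sigma-s_2)}\|\Lambda^{s_2}f\|_{L^{2}}$; the two geometric series converge precisely because $\sigma-s_1=\theta(s_2-s_1)>0$ and $\sigma-s_2=-(1-\theta)(s_2-s_1)<0$, which is exactly the requirement $0<\theta<1$ (and the hypothesis $0\le s_1\le s$ guarantees $s_1<\sigma$). Optimizing the free integer $N$ by choosing it so that $2^{N(s_2-s_1)}\sim\|\Lambda^{s_2}f\|_{L^{2}}/\|\Lambda^{s_1}f\|_{L^{2}}$ balances the two contributions and reproduces the exponents $1-\theta$ and $\theta$, completing the argument; rounding $N$ to an integer costs only a harmless constant.
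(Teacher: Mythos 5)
The paper offers no proof of this lemma at all --- it is quoted from Nirenberg \cite{1959On} as a known result --- so there is no internal argument to compare against, and your proposal must be judged on its own. On its own it is correct, and it is the standard Fourier-analytic route rather than Nirenberg's original real-variable one: Plancherel--H\"older interpolation of $\dot H^{\sigma}$ norms (your step (ii) is airtight), the homogeneous embedding $\dot H^{1-\frac{2}{p}}(\mathbb{R}^2)\hookrightarrow L^p$ for $2\le p<\infty$, and for $p=\infty$ a direct frequency-threshold argument. You also correctly isolate why the endpoint hypothesis appears: at $p=\infty$ one has $\sigma=s+1$, the critical embedding $\dot H^1(\mathbb{R}^2)\hookrightarrow L^\infty$ fails, and your splitting at $j=N$ converges precisely because $\sigma-s_1=\theta(s_2-s_1)>0$ and $\sigma-s_2=-(1-\theta)(s_2-s_1)<0$, i.e.\ because $0<\theta<1$ (with $s_1\le s$ guaranteeing $s_1<\sigma$); the optimization $2^{N(s_2-s_1)}\sim\|\Lambda^{s_2}f\|_{L^2}/\|\Lambda^{s_1}f\|_{L^2}$ reproduces the exponents $1-\theta,\theta$ exactly as claimed.

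One step in (i) should be made explicit rather than glossed. After Bernstein you cannot obtain $\|g\|_{L^p}\lesssim\|\Lambda^{\sigma'}g\|_{L^2}$ by summing $\sum_j 2^{j\sigma'}\|\dot\Delta_j g\|_{L^2}$ with the triangle inequality: that sum is the $\dot B^{\sigma'}_{2,1}$ norm, which is strictly stronger than the $\ell^2$-based norm $\dot H^{\sigma'}=\dot B^{\sigma'}_{2,2}$. To ``sum the $\ell^2$-bound'' as you say, you need the Littlewood--Paley square-function inequality
\begin{align*}
\|g\|_{L^p}\lesssim\Bigl\|\Bigl(\sum_j|\dot\Delta_j g|^2\Bigr)^{1/2}\Bigr\|_{L^p}\le\Bigl(\sum_j\|\dot\Delta_j g\|_{L^p}^2\Bigr)^{1/2},
\end{align*}
the second step being Minkowski's inequality in $L^{p/2}$, valid exactly because $p\ge 2$, and only then Bernstein termwise. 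Alternatively, since $\dot H^{1-\frac{2}{p}}(\mathbb{R}^2)\hookrightarrow L^p$ is itself classical, citing it directly is defensible. With that single justification supplied (and the tacit normalization that the optimization of $N$ presumes $0<\|\Lambda^{s_1}f\|_{L^2},\|\Lambda^{s_2}f\|_{L^2}<\infty$, the degenerate cases being trivial as you note), your proof is complete.
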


The following commutator estimate and product estimate are useful to control convective term and $Q(\nabla u,\tau)$.
\begin{lemm}\label{lemma4}\cite{kato1}
Assume that $s>0$, $p, p_1, p_4 \in (1,\infty)$ and $\frac{1}{p}=\frac{1}{p_1}+\frac{1}{p_2}=\frac{1}{p_3}+\frac{1}{p_4}$, then we obtain
\begin{align*}
\|[\Lambda^{s},f]g\|_{L^{p}}\leq C\left(\|\Lambda^{s}f\|_{L^{p_{1}}}\|g\|_{L^{p_{2} }}+\|\nabla f\|_{L^{p_{3}}}\|\Lambda^{s-1}g\|_{L^{p_{4}}}\right).
\end{align*}
\end{lemm}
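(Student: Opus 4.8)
The plan is to prove the Kato--Ponce type bound via a Littlewood--Paley (Bony) paraproduct decomposition of the product $fg$, applying the commutator $[\Lambda^s,\cdot]$ block by block and estimating each piece separately. First I would write the commutator acting on the product as $[\Lambda^s,f]g=\Lambda^s(fg)-f\,\Lambda^s g$, and decompose $fg$ using Bony's paraproduct formula $fg=T_fg+T_gf+R(f,g)$, where $T_fg=\sum_j \dot S_{j-1}f\,\dot\Delta_j g$ is the paraproduct and $R(f,g)=\sum_{|j-j'|\le 1}\dot\Delta_j f\,\dot\Delta_{j'}g$ is the remainder. The key structural observation is that the subtracted term $f\,\Lambda^s g$ partially cancels against $\Lambda^s(T_fg)$: the genuinely dangerous frequency interactions (where $f$ carries the low frequency and $g$ the high frequency, so that $\Lambda^s$ and $f$ almost commute) produce a commutator on each dyadic block $[\Lambda^s, \dot S_{j-1}f]\dot\Delta_j g$, on which the full power of the spectral localization and the mean value theorem can be brought to bear.

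Next I would estimate the three resulting contributions. For the commutator-paraproduct piece $\sum_j[\Lambda^s,\dot S_{j-1}f]\dot\Delta_j g$, I would use the integral representation of $\Lambda^s$ applied to a function frequency-localized near $2^j$: writing the kernel and applying a first-order Taylor expansion in the translation variable gains exactly one derivative off $\dot S_{j-1}f$, yielding the pointwise/$L^p$ bound $\|[\Lambda^s,\dot S_{j-1}f]\dot\Delta_j g\|_{L^p}\lesssim 2^{j(s-1)}\|\nabla \dot S_{j-1}f\|_{L^{p_3}}\|\dot\Delta_j g\|_{L^{p_4}}$. Summing in $j$ against the factor $2^{j(s-1)}$, controlling $\|\nabla \dot S_{j-1}f\|_{L^{p_3}}\lesssim\|\nabla f\|_{L^{p_3}}$ uniformly, and recognizing $\big(2^{j(s-1)}\|\dot\Delta_j g\|_{L^{p_4}}\big)_j$ as essentially an $\dot B^{s-1}_{p_4,r}$-type sequence, reassembles the second term $\|\nabla f\|_{L^{p_3}}\|\Lambda^{s-1}g\|_{L^{p_4}}$ of the claimed inequality (using $s>0$ so the low-frequency summation converges and the embedding into the $L^{p_4}$ Sobolev norm holds). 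For the other paraproduct $\Lambda^s(T_gf)$ and the remainder $\Lambda^s R(f,g)$, the operator $\Lambda^s$ falls on blocks where $f$ may carry the high frequency; here $\Lambda^s$ simply contributes a factor $2^{js}$ on the relevant block, and standard H\"older and Bernstein estimates with the exponent relation $\frac1p=\frac1{p_1}+\frac1{p_2}$ bound these terms by $\|\Lambda^s f\|_{L^{p_1}}\|g\|_{L^{p_2}}$, which is the first term of the stated inequality. The subtracted $-f\,\Lambda^sg$ from the low-high interactions has already been absorbed into the commutator piece, and its high-frequency remainder is reorganized into the same two norms.

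Finally I would sum the dyadic estimates in $\ell^r$ (here with $r=1$, or in $L^p$ directly since the statement is an $L^p$ bound with $r=1$ implicitly via the $\ell^1$ summation over $j$), using almost-orthogonality of the Littlewood--Paley blocks to pass from the sum of block-$L^p$ norms to the $L^p$ norm of the sum, and invoking the assumption $s>0$ together with $p,p_1,p_4\in(1,\infty)$ so that the geometric series in $j$ converge and the Besov--Sobolev identifications $\dot B^{s-1}_{p_4,1}\supset \{\Lambda^{s-1}g\}$ and boundedness of the relevant Fourier multipliers on $L^{p_i}$ are available. I expect the \emph{main obstacle} to be the commutator gain on each block: making rigorous the one-derivative gain $[\Lambda^s,\dot S_{j-1}f]\dot\Delta_j g\lesssim 2^{j(s-1)}\|\nabla\dot S_{j-1}f\|\,\|\dot\Delta_j g\|$ requires a careful kernel estimate for $\Lambda^s$ restricted to frequencies $\sim 2^j$ (equivalently a rescaled Mikhlin/Coifman--Meyer bound), and controlling the interplay of the two H\"older splittings $\frac1p=\frac1{p_1}+\frac1{p_2}=\frac1{p_3}+\frac1{p_4}$ so that each term lands in exactly the norm advertised. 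The remaining summations and embeddings are routine once this block-level commutator bound is in place.
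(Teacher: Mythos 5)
The paper offers no proof of this lemma at all: it is quoted verbatim from \cite{kato1} (the Kato--Ponce/Kenig--Ponce--Vega commutator estimate), so your sketch can only be measured against the standard literature argument. Your overall route is indeed that argument: Bony decomposition, cancellation of $f\,\Lambda^s g$ against $\Lambda^s T_f g$ leaving the block commutators $[\Lambda^s,\dot S_{j-1}f]\dot\Delta_j g$, and a one-derivative gain on each block. The block estimate itself is sound: writing the kernel $k_j$ of $\Lambda^s$ localized to frequencies $\sim 2^j$, one has $\int |z|\,|k_j(z)|\,dz\lesssim 2^{j(s-1)}$, and Minkowski plus translation-invariant H\"older with $\frac1p=\frac1{p_3}+\frac1{p_4}$ gives $\|[\Lambda^s,\dot S_{j-1}f]\dot\Delta_j g\|_{L^p}\lesssim 2^{j(s-1)}\|\nabla f\|_{L^{p_3}}\|\dot\Delta_j g\|_{L^{p_4}}$, exactly as you claim.

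The genuine gap is in your summation step. Summing the block $L^p$ norms by the triangle inequality, as you propose, yields $\|\nabla f\|_{L^{p_3}}\sum_j 2^{j(s-1)}\|\dot\Delta_j g\|_{L^{p_4}}=\|\nabla f\|_{L^{p_3}}\|g\|_{\dot B^{s-1}_{p_4,1}}$, and your claimed identification ``$\dot B^{s-1}_{p_4,1}\supset\{\Lambda^{s-1}g\}$'' is backwards: the correct embeddings are $\dot B^{s-1}_{p_4,\min(p_4,2)}\hookrightarrow \Lambda^{1-s}L^{p_4}\hookrightarrow \dot B^{s-1}_{p_4,\max(p_4,2)}$, so the $\ell^1$-Besov norm is \emph{strictly stronger} than $\|\Lambda^{s-1}g\|_{L^{p_4}}$ and the $\ell^1$ summation proves only a weaker statement. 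The same defect appears in your treatment of $\Lambda^s T_g f$, where $\sum_j 2^{js}\|\dot\Delta_j f\|_{L^{p_1}}$ is $\|f\|_{\dot B^s_{p_1,1}}$, not $\|\Lambda^s f\|_{L^{p_1}}$. The missing idea --- which is the actual analytic content of the standard proof --- is to exploit that each summand is spectrally supported in an annulus $2^j\mathscr{C}$, pass to the Littlewood--Paley square function $\bigl\|\bigl(\sum_j|u_j|^2\bigr)^{1/2}\bigr\|_{L^p}$ rather than to $\sum_j\|u_j\|_{L^p}$, and handle the mixed exponents by pointwise domination with maximal functions ($|\dot S_{j-1}g|\le C\,Mg$, and a maximal-function version of the mean-value bound for the commutator blocks), concluding with H\"older and the Fefferman--Stein vector-valued maximal inequality. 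This is precisely where the hypothesis $p,p_1,p_4\in(1,\infty)$ is used (while $p_2,p_3=\infty$ remain admissible); in your sketch these exponent restrictions do no work, which is a symptom of the gap. The remainder term $R(f,g)$, whose blocks sit in balls rather than annuli, needs $s>0$ for the resummation, as you correctly note, but it too should be assembled through the square-function characterization rather than an $\ell^1$ sum.
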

\begin{lemm}\label{lemma5}\cite{kato1}
Assume that $s>0$, $p, p_2, p_4 \in (1,\infty)$ and $\frac{1}{p}=\frac{1}{p_1}+\frac{1}{p_2}=\frac{1}{p_3}+\frac{1}{p_4}$, then we obtain
\begin{align*}
\|\Lambda^{s}\left(fg\right)\|_{L^{p}} \leq C\left(\|f \|_{L^{p_{1}}}\|\Lambda^{s}g\|_{L^{p_{2}}} + \|g\|_{L^{p_ {3}}}\|\Lambda^{s}f\|_{L^{p_{4}}}\right).
\end{align*}
\end{lemm}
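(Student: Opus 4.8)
The plan is to recover the proof of this fractional Leibniz rule (Kato--Ponce inequality) via Bony's paraproduct decomposition combined with the Littlewood--Paley square-function characterization of the homogeneous Sobolev norm. The starting point is the norm equivalence, valid for $1<p<\infty$,
\[\|\Lambda^{s}h\|_{L^{p}}\approx\Big\|\big(\textstyle\sum_{q\in\mathbb{Z}}4^{qs}|\dot{\Delta}_{q}h|^{2}\big)^{1/2}\Big\|_{L^{p}},\]
which reduces each estimate to a pointwise bound on the dyadic blocks followed by an application of the (scalar and vector-valued) Hardy--Littlewood maximal inequality. First I would split the product as $fg=T_{f}g+T_{g}f+R(f,g)$, where $T_{f}g=\sum_{j}S_{j-1}f\,\dot{\Delta}_{j}g$ is the low--high paraproduct, $T_{g}f$ is its analogue with $f,g$ exchanged, and $R(f,g)=\sum_{|j-k|\le1}\dot{\Delta}_{j}f\,\dot{\Delta}_{k}g$ collects the high--high interactions. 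The three pieces are estimated separately, and the two paraproducts are designed to produce exactly the two terms on the right-hand side.

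For the paraproduct $T_{f}g$, the spectrum of $S_{j-1}f\,\dot{\Delta}_{j}g$ lies in an annulus of radius $\sim2^{j}$, so $\dot{\Delta}_{q}(T_{f}g)$ is nonzero only for $|j-q|\le4$; in particular the weight satisfies $2^{qs}\approx2^{js}$ on the relevant range. Using the maximal bound $|S_{j-1}f|\lm\mathcal{M}f$ and dominating the block $\dot{\Delta}_{q}$ by a maximal function, I would obtain the pointwise estimate $2^{qs}|\dot{\Delta}_{q}(T_{f}g)|\lm(\mathcal{M}f)\,2^{qs}\mathcal{M}(\dot{\Delta}_{q}g)$ (after summing the four neighbouring scales). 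Taking the $\ell^{2}$-norm in $q$ and then the $L^{p}$-norm, Hölder with $\frac1p=\frac1{p_{1}}+\frac1{p_{2}}$ together with the Fefferman--Stein vector-valued maximal inequality (admissible since $p_{2}\in(1,\infty)$) yields $\|\Lambda^{s}(T_{f}g)\|_{L^{p}}\lm\|f\|_{L^{p_{1}}}\|\Lambda^{s}g\|_{L^{p_{2}}}$. The symmetric term $T_{g}f$ is handled identically with the pairing $\frac1p=\frac1{p_{3}}+\frac1{p_{4}}$, producing $\|g\|_{L^{p_{3}}}\|\Lambda^{s}f\|_{L^{p_{4}}}$.

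The remainder $R(f,g)$ is the step I expect to be the main obstacle, and it is exactly where the hypothesis $s>0$ is used. Writing $R(f,g)=\sum_{j}\dot{\Delta}_{j}f\,\widetilde{\dot{\Delta}}_{j}g$ with $\widetilde{\dot{\Delta}}_{j}=\dot{\Delta}_{j-1}+\dot{\Delta}_{j}+\dot{\Delta}_{j+1}$, the product now has spectrum only in a \emph{ball} of radius $\sim2^{j}$, so $\dot{\Delta}_{q}R$ receives contributions from every $j\ge q-C$ rather than from a single scale. Loading the derivative onto $g$, I would estimate $2^{qs}|\dot{\Delta}_{q}R|\lm\sum_{j\ge q-C}2^{(q-j)s}\,(\sup_{j}|\dot{\Delta}_{j}f|)\,2^{js}\mathcal{M}(\widetilde{\dot{\Delta}}_{j}g)$, where the geometric factor $2^{(q-j)s}$ is summable in $j$ precisely because $s>0$; a Young-type convolution inequality in the $q$-variable then turns this into an $\ell^{2}$ square function in $g$. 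Since $\sup_{j}|\dot{\Delta}_{j}f|\lm\mathcal{M}f$, Hölder and Fefferman--Stein again give the bound $\|f\|_{L^{p_{1}}}\|\Lambda^{s}g\|_{L^{p_{2}}}$ (placing the derivative on $f$ instead would give the other pairing, so either suffices). Summing the contributions from $T_{f}g$, $T_{g}f$ and $R(f,g)$ proves the claim. The only remaining bookkeeping point is that $p_{1}$ or $p_{3}$ may equal $\infty$; in that case $\mathcal{M}f$ or $\mathcal{M}g$ is simply replaced by $\|f\|_{L^{\infty}}$ or $\|g\|_{L^{\infty}}$, so no maximal inequality is needed on those factors and the constraints $p,p_{2},p_{4}\in(1,\infty)$ are exactly what the square-function and Fefferman--Stein steps require.
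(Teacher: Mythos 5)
The paper offers no proof of this lemma to compare against: it is quoted directly from Kato--Ponce \cite{kato1}. Your paraproduct argument is therefore measured against the standard literature, and in architecture it is exactly the standard modern proof: Bony decomposition, the Littlewood--Paley square-function characterization of $\|\Lambda^{s}\cdot\|_{L^{p}}$ (which is indeed why $p,p_{2},p_{4}\in(1,\infty)$ appear in the hypotheses), Fefferman--Stein for the paraproducts, and summability of $2^{(q-j)s}$ over $j\ge q-C$ in the high--high remainder, which is precisely where $s>0$ enters. Your bookkeeping of the two exponent pairings and of the $p_{1}=\infty$, $p_{3}=\infty$ endpoint cases is also correct.

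There is, however, one step that is false as literally written, and you use it in both the paraproduct and the remainder: the pointwise factorizations
\[
2^{qs}\big|\dot{\Delta}_{q}(T_{f}g)\big|\lesssim(\mathcal{M}f)\,2^{qs}\mathcal{M}(\dot{\Delta}_{q}g),
\qquad
\big|\dot{\Delta}_{q}\big(\dot{\Delta}_{j}f\,\widetilde{\dot{\Delta}}_{j}g\big)\big|\lesssim\Big(\sup_{k}|\dot{\Delta}_{k}f|\Big)\,\mathcal{M}\big(\widetilde{\dot{\Delta}}_{j}g\big).
\]
What is true pointwise is only $|\dot{\Delta}_{q}h|\lesssim\mathcal{M}h$ with $h$ the \emph{whole} product, and the Hardy--Littlewood maximal operator does not factor: $\mathcal{M}(uv)\le(\mathcal{M}u)(\mathcal{M}v)$ fails in general, because the averages defining $\mathcal{M}(uv)(x)$ see values of $u$ at points away from $x$ that $\mathcal{M}u(x)$ (let alone the same-point quantity $\sup_{k}|\dot{\Delta}_{k}f(x)|$) does not control. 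The repair is standard and preserves your plan: postpone the factorization until after Fefferman--Stein. Concretely, with $h_{j}=S_{j-1}f\,\dot{\Delta}_{j}g$ (resp. $h_{j}=\dot{\Delta}_{j}f\,\widetilde{\dot{\Delta}}_{j}g$ after the Young-convolution step in $q$), bound
\[
\Big\|\Big(\sum_{q}4^{qs}|\dot{\Delta}_{q}(T_{f}g)|^{2}\Big)^{1/2}\Big\|_{L^{p}}
\lesssim\Big\|\Big(\sum_{j}4^{js}\big(\mathcal{M}h_{j}\big)^{2}\Big)^{1/2}\Big\|_{L^{p}}
\lesssim\Big\|\Big(\sum_{j}4^{js}|h_{j}|^{2}\Big)^{1/2}\Big\|_{L^{p}},
\]
the last step by Fefferman--Stein (legitimate since $p\in(1,\infty)$), and only then use $|S_{j-1}f|\lesssim\mathcal{M}f$ (resp. $|\dot{\Delta}_{j}f|\lesssim\mathcal{M}f$) pointwise \emph{inside} the square function, followed by H\"older and the scalar maximal inequality on $L^{p_{1}}$; note $p_{1}>p>1$ automatically from $\frac1p=\frac1{p_{1}}+\frac1{p_{2}}$ with $p_{2}<\infty$, and $p_{1}=\infty$ is handled by $|S_{j-1}f|\le C\|f\|_{L^{\infty}}$ as you observe. (Alternatively, Peetre's maximal operator yields a genuinely pointwise factorization at the cost of heavier machinery.) With this reordering your proof closes and is, in substance, the standard proof of the cited estimate.
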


\section{Critical global regularity}
In this section, we shall prove the critical global regularity for (\ref{eq2}).
 
 \textbf{Proof of Theorem \ref{1theo1}:}\\
We only prove the critical case with $s=2\beta$ and $\beta\in(\frac 1 2,1)$. Assume that $(u^a,\tau^a)$ be a local strong solution of (\ref{eq2}).
Taking the $L^2$ inner product to (\ref{eq2}) with $(u,\tau)$ and integrating by parts, we get
\begin{align*}
\quad\frac{1}{2}\frac{d}{dt}\|(u^a,\tau^a)\|_{L^{2}}^{2}+\|\Lambda^{\beta}\tau^a\|_{L^{2}}^{2}+a\|\tau^a\|_{L^{2}}^{2}=-\langle Q(\nabla u^a,\tau^a), \tau^a\rangle
\end{align*}
where we use
\begin{align*}
\langle u^a,\mathrm{div}\tau^a\rangle+\langle Du^a,\tau^a\rangle=0.
\end{align*}
Then we deduce that 
\begin{align*}
-\langle Q(\nabla u^a,\tau^a), \tau^a\rangle\lesssim \ \|\nabla u^a\|_{L^2}\|\tau^a\|^{2}_{L^4}
\lesssim \ \|\nabla u^a\|_{L^2}\|\tau^a\|_{L^2}\|\nabla \tau^a\|_{L^2}.
\end{align*}
Therefore,
\begin{align}\label{211}
\quad\frac{1}{2}\frac{d}{dt}\|(u^a,\tau^a)\|_{L^{2}}^{2}+\|\Lambda^{\beta}\tau^a\|_{L^{2}}^{2}+a\|\tau^a\|_{L^{2}}^{2}\lesssim\|\nabla u^a\|_{L^2}\|\tau^a\|_{L^2}\|\nabla \tau^a\|_{L^2}.
\end{align}
Applying $\Lambda^{2\beta}$ to (\ref{eq2}), taking the $L^2$ inner product to (\ref{eq2}) with $(\Lambda^{2\beta} u^a,\Lambda^{2\beta} \tau^a)$ and integrating by parts, we obtain
\begin{align*}
&\quad\frac{1}{2}\frac{d}{dt}\|(\Lambda^{2\beta} u^a,\Lambda^{2\beta} \tau^a)\|_{L^{2}}^{2}+\|\Lambda^{3\beta}\tau^a\|_{L^{2}}^{2}+a\|\Lambda^{2\beta}\tau^a\|_{L^{2}}^{2}\\\nonumber
&=-\langle (\Lambda^{2\beta} (u^a\cdot \nabla u^a),\Lambda^{2\beta} u^a \rangle-\langle (\Lambda^{2\beta} (u^a\cdot \nabla \tau^a),\Lambda^{2\beta} \tau^a\rangle
-\langle \Lambda^{2\beta} Q(\nabla u^a,\tau^a), \Lambda^{2\beta}\tau^a\rangle \\\nonumber
&\triangleq\sum_{i=1}^{3}I_i \nonumber.
\end{align*}
Thanks to the fact $\mathrm{div }u^a=0$ and Lemma \ref{lemma4}, we observe 
\begin{align*}
|I_1|=|\langle [\Lambda^{2\beta}, u^a\cdot\nabla]u^a,\Lambda^{2\beta} u^a \rangle|&\lesssim\ \|\Lambda^{2\beta}u^a\|_{L^{\frac{2}{\beta}}}^{2}\|\nabla u^a\|_{L^{\frac{1}{1-\beta}}}\\
&\lesssim\ \|\Lambda^{\beta+1}u^a\|_{L^2}^{2}\|u^a\|_{H^{2\beta}}.
\end{align*}
By virtue of Lemma \ref{lemma5} and $\frac{1}{2}<\beta<1$, we obtain
\begin{align*}
|I_2| \lesssim\ & \|\Lambda^{3\beta}\tau^a\|_{L^2}\left(\|\Lambda^{\beta+1}u^a\|_{L^2}\|\tau\|_{L^\infty}+\|\Lambda^{\beta+1}\tau^a\|_{L^2}\|u\|_{L^\infty}\right).
\end{align*}
We deduce from Lemma \ref{lemma5} that
\begin{align*}
|I_3|\lesssim &\ \|\Lambda^{3\beta}\tau^a\|_{L^2}\|\Lambda^{\beta}Q(\nabla u^a,\tau^a)\|_{L^2}\\
\lesssim &\ \|\Lambda^{3\beta}\tau^a\|_{L^2}\left(\|\Lambda^{\beta+1}u^a\|_{L^2}\|\tau^a\|_{L^\infty}+\|\Lambda^{\beta}\tau^a\|_{L^{\frac{2}{\beta}}}\|\nabla u\|_{L^{\frac{2}{1-\beta}}}\right)\\
\lesssim &\ \|\Lambda^{3\beta}\tau^a\|_{L^2}\left(\|\Lambda^{\beta+1}u^a\|_{L^2}\|\tau^a\|_{L^\infty}+\|\Lambda^{\beta+1}u^a\|_{L^{2}}\|\Lambda\tau^a\|_{L^{2}}\right).
\end{align*}
Combining the estimates from $I_1$ to $I_3$, we have
\begin{align}\label{212}
&\quad\frac{1}{2}\frac{d}{dt}\|(\Lambda^{2\beta} u^a,\Lambda^{2\beta} \tau^a)\|_{L^{2}}^{2}+\|\Lambda^{3\beta}\tau^a\|_{L^{2}}^{2}+a\|\Lambda^{2\beta}\tau^a\|_{L^{2}}^{2}\\\nonumber
&\lesssim\left(\|u^a\|_{H^{2\beta}}+\|\tau^a\|_{H^{2\beta}}\right)\left(\|\Lambda^{\beta+1} u^a\|_{L^{2}}^{2}+\|\Lambda^{\beta+1}\tau^a\|_{H^{2\beta-1}}^{2}\right).
\end{align}
Now we consider the inner product for $u^a$ and $\mathrm{div}\tau^a$ to generate the dissipation of $u^a$. We choose $k>0$, which will be determined later. According to (\ref{eq2}), one can deduce that
\begin{align}\label{213}
  &\frac{d}{dt}\langle u^a,k\mathrm{div}\tau^a\rangle+\frac{k}{2}\|\nabla u^a\|_{L^{2}}^{2}\\\nonumber
  =&-k\langle(u^a\cdot\nabla u^a),\mathbb{P}\mathrm{div}\tau^a\rangle-k\langle\mathrm{div}(u^a\cdot\nabla\tau^a),u\rangle
  -k\langle\mathrm{div} Q(\nabla u^a,\tau^a),u^a\rangle\\ \nonumber
  &-k\langle(-\Delta)^\beta\mathrm{div}\tau^a,u^a\rangle-ak\langle\mathrm{div}\tau^a,u^a\rangle
  +k\langle\mathbb{P}\mathrm{div}\tau^a, \mathrm{div}\tau^a\rangle\\\nonumber
  \lesssim&\ \|u^a\|_{L^\infty}\|\nabla u^a\|_{L^2}\|\nabla \tau^a\|_{L^2}+\|\tau^a\|_{L^\infty}\|\nabla u^a\|_{L^2}^{2}\\\nonumber
  &+\|\Lambda^{\beta}\tau^a\|_{L^2}\|\Lambda^{\beta+1} u^a\|_{L^2}^{2}+a\|\tau^a\|_{L^2}\|\nabla u^a\|_{L^2}^{2}+\|\nabla\tau^a\|_{L^{2}}^{2}\\\nonumber
  \le&\ C(\|u^a\|_{H^{2\beta}}+\|\tau^a\|_{H^{2\beta}})\left(\|\nabla u^a\|_{H^{\beta}}^{2}+\|\Lambda^{\beta}\tau^a\|_{H^{2\beta}}^{2}\right)\\\nonumber
  &+\frac{1}{16}\|\Lambda^{\beta}\tau^a\|_{H^{2\beta}}^2+4k^{2}\|\nabla u^a\|_{H^{\beta}}^{2}+\frac{a^{2}}{16}\|\tau^a\|_{H^{2\beta}}^{2}+4k^{2}\|\nabla u^a\|_{H^{\beta}}^{2}+k\|\Lambda^{\beta}\tau^a\|_{H^{2\beta}}^{2}.
\end{align}
Along the same line, we find
\begin{align*}
  &\frac{d}{dt}\langle \Lambda^{\beta}u^a,k\Lambda^{\beta}\mathrm{div}\tau^a\rangle_{L^{2}}+\frac{k}{2}\|\Lambda^{\beta}\nabla u^a\|_{L^{2}}^{2} \\
  =&-k\langle\Lambda^{\beta}(u^a\cdot\nabla u^a),\Lambda^{\beta}\mathbb{P}\mathrm{div}\tau^a\rangle-k\langle\Lambda^{\beta}\mathrm{div}(u^a\cdot\nabla\tau^a),\Lambda^{\beta}u\rangle
  -k\langle\Lambda^{\beta}\mathrm{div} Q(\nabla u^a,\tau^a),\Lambda^{\beta}u^a\rangle\\
  &-k\langle\Lambda^{\beta}(-\Delta)^\beta\mathrm{div}\tau^a,\Lambda^{\beta}u^a\rangle-ak\langle\Lambda^{\beta}\mathrm{div}\tau^a,\Lambda^{\beta}u^a\rangle
  +k\langle\Lambda^{\beta}\mathbb{P}\mathrm{div}\tau^a, \Lambda^{\beta}\mathrm{div}\tau^a\rangle\\
  \triangleq&\sum_{i=4}^{9}I_i .
\end{align*}
Firstly, we infer that
\begin{align*}
|I_4|\lesssim\|\Lambda^{\beta+1}u^a\|_{L^2}\|\Lambda^{\beta+1}\tau^a\|_{L^2}\|u^a\|_{L^\infty}.
\end{align*}
and 
\begin{align*}
|I_5|\lesssim \|\Lambda^{\beta+1}u^a\|_{L^2}\left(\|\Lambda^{\beta+1}u^a\|_{L^2}\|\tau^a\|_{L^\infty}+\|\Lambda^{\beta+1}\tau^a\|_{L^2}\|u^a\|_{L^\infty}\right).
\end{align*}
By a similar derivation of $I_3$, we deduce that
\begin{align*}
|I_6|&\lesssim \|\Lambda^{\beta+1}u^a\|_{L^2}\left(\|\Lambda^{\beta+1}u^a\|_{L^2}\|\tau^a\|_{L^\infty}+\|\Lambda^{\beta}\tau^a\|_{L^{\frac{2}{\beta}}}\|\nabla u\|_{L^{\frac{2}{1-\beta}}}\right)\\
&\lesssim \|\Lambda^{\beta+1}u^a\|_{L^2}\left(\|\Lambda^{\beta+1}u^a\|_{L^2}\|\tau^a\|_{L^\infty}+\|\Lambda \tau^a\|_{L^{2}}\|\Lambda^{1+\beta} u\|_{L^{2}}\right).
\end{align*}
Using Young's inequality, we have
\begin{align*}
|I_7|\le &k\|\Lambda^{3\beta}\tau^a\|_{L^2}\|\Lambda^{\beta+1} u^a\|_{L^2}
\le Ck\|\Lambda^{3\beta}\tau^a\|_{L^2}^2+\frac{k}{100}\|\Lambda^{\beta+1} u^a\|_{L^2}^{2},\\
|I_8|\le &\frac{a}{16}\|\tau^a\|_{H^{2\beta}}^{2}+4ak^{2}\|\nabla u^a\|_{H^{\beta}}^{2}.
\end{align*}
Due to properties of Leray projection operator, we get
\begin{align*}
|I_9| \lesssim k\|\Lambda^{1+\beta}\tau^a\|_{L^{2}}^{2}.
\end{align*}
Substituting the estimates from $I_4$ to $I_9$, we yield 
\begin{align}\label{214}
  &\frac{d}{dt}\langle \Lambda^{\beta} u^a,k\Lambda^{\beta}\mathrm{div}\tau^a\rangle_{L^{2}}+\frac{k}{2}\|\Lambda^{\beta}\nabla u^a\|_{L^{2}}^{2} \\\nonumber
  \le &\ (\|u^a\|_{H^{2\beta}}+\|\tau^a\|_{H^{2\beta}})(\|\Lambda^{1+\beta} u^a\|_{L^{2}}^2+\|\Lambda^{1+\beta}\tau^a\|_{L^{2}}^2)\\
  &+Ck\|\Lambda^{3\beta}\tau^a\|_{L^{2}}^2+\frac{k}{100}\|\Lambda^{1+\beta} u^a\|_{L^{2}}^{2}+\frac{a}{16}\|\tau^a\|_{H^{2\beta}}^{2}+4ak^{2}\|\nabla u^a\|_{H^{\beta}}^{2}+k\|\Lambda^{1+\beta}\tau^a\|_{L^{2}}^{2}.\nonumber
\end{align}
Combining (\ref{211})-(\ref{214}) and choosing $k$ small enough, we conclude that 
\begin{align}\label{215}
&\frac{1}{2}\frac{d}{dt}\left(\|(u^a,\tau^a)\|_{H^{2\beta}}^{2}+2k\langle-\nabla u^a,\tau^a\rangle_{H^{\beta}}\right)+\frac{k}{4}\|\nabla u^a\|_{H^{\beta}}^{2}+\frac{1}{2}\|\Lambda^{\beta}\tau^a\|_{H^{2\beta}}^{2}+\frac{a}{2}\|\tau^a\|_{H^{2\beta}}^{2}\\ \nonumber
\lesssim & \left(\|u^a\|_{H^{2\beta}}+\|\tau^a\|_{H^{2\beta}})(\|\nabla u^a\|_{H^{\beta}}^{2}+\|\Lambda^{\beta}\tau^a\|_{H^{2\beta}}\right).\nonumber
\end{align}
Integrating (\ref{215}) in time on $[0,t]$, then we deduce that
\begin{align*}
&\sup_{t}\|u^a\|_{H^{{2\beta}}}^{2}+\sup_{t}\|\tau^a\|_{H^{{2\beta}}}^{2}+\int_{0}^{t}(\|\Lambda^{\beta}\tau^a\|_{H^{{2\beta}}}^{2}+\frac{k}{2}\|\nabla u^a\|_{H^{\beta}}^{2})dt^{\prime} \\
\lesssim& \|u_0\|_{H^{2\beta}}^2+\|\tau_0\|_{H^{2\beta}}^2+(\sup_t\|u^a\|_{H^{2\beta}}+\sup_t\|\tau^a\|_{H^{2\beta}})\int_0^t(\|\Lambda^\beta\tau^a\|_{H^{2\beta}}^2+\|\nabla u^a\|_{H^{\beta}}^2)dt^\prime.
\end{align*}
There exists $\delta>0$, when $\|(u_0,\tau_0)\|_{H^{2\beta}}<\delta$, we conclude by the bootstrap arguments that
\begin{align}\label{217}
\|u^a\|_{H^{{2\beta}}}+\|\tau^a\|_{H^{{2\beta}}}\lesssim\delta.
\end{align}
Combining (\ref{217}) and (\ref{215}) and taking $\delta$ small enough, we get
\begin{align*}
\frac{1}{2}\frac{d}{dt}\left(\|(u^a,\tau^a)\|_{H^{2\beta}}^{2}+2k\langle-\nabla u^a,\tau^a\rangle_{H^{\beta}}\right)+\frac{k}{4}\|\nabla u^a\|_{H^{\beta}}^{2}+\frac{1}{2}\|\Lambda^{\beta}\tau^a\|_{H^{2\beta}}^{2}\lesssim  0.
\end{align*}
Then we achieve the conclusion. \hfill$\Box$

	\section{Optimal time decay rate with $a=0$}
	\par
	In this section, we investigate the optimal time decay rate for (\ref{eq2}) with $a=0$. For the sake of convenience, we denote $u^0=u,\ \tau^0=\tau$ in this section. Moreover, we agree that all occurrences of $\delta$ and $C_k$ denote their positive powers throughout this paper. We will present the proof of Theorem \ref{1theo2}. 

\subsection{The upper bound}

We now consider the optimal time decay of (\ref{eq2}) based on Schonbek's \cite{Schonbek1985,Schonbek1991} strategy . According to \cite{LLY}, in order to prove optimal decay rate for (\ref{eq2}), we add a term $\left\langle\tau,-\Lambda^{-2+2 \beta} \nabla u\right\rangle$ to obtain lower energy dissipation estimate for $u$. We introduce the following energy and dissipation functionals for $(u,\tau)$:
\begin{align*}
E_{0}&=\left\|(u, \tau)\right\|_{H^{s}}^{2}+2k\left(\left\langle-\nabla  u, \tau\right\rangle_{H^{s-\beta}}+\langle\tau,-\Lambda^{-2+2\beta}\nabla u \rangle\right),\\
D_{0}&=\frac{k}{2}\left\|\Lambda^{\beta}u\right\|_{H^{s+1-2\beta}}^{2}+\left\|\Lambda^{\beta} \tau\right\|_{H^{s}}^{2},
\end{align*}
where $k$ is a sufficiently small constant.
Then, we get the following proposition.
\begin{prop}{\label{3prop1}}
Let $a=0$. Let $(u,\tau)$  be a strong solution of (\ref{eq2})  with the initial data $(u_{0},\tau_{0})$ under the condition in Theorem \ref{1theo1}. Then we have
\begin{align*}
\frac{d}{dt}E_{0}+D_{0}\le 0.
\end{align*}
\end{prop}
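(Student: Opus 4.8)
The plan is to differentiate $E_0$ in time and bound the resulting right-hand side by $-D_0$. Writing $E_0$ as the sum of $\|(u,\tau)\|_{H^s}^2$ and the two cross terms, I would organize its time derivative into three contributions. The first, from $\frac{d}{dt}\|(u,\tau)\|_{H^s}^2$, is the standard energy estimate: testing the equations against $(u,\tau)$ at the $L^2$ and $\dot{H}^s$ levels and summing yields the diffusive dissipation $-2\|\Lambda^\beta\tau\|_{H^s}^2$ plus nonlinear remainders. The second, from $2k\frac{d}{dt}\langle-\nabla u,\tau\rangle_{H^{s-\beta}}$, is exactly the div--grad coupling already computed in the proof of Theorem \ref{1theo1} (the terms $I_4$--$I_9$ there); it furnishes the high-frequency velocity dissipation $-k\|\nabla u\|_{H^{s-\beta}}^2$, together with the positive Leray term $\|\mathbb{P}\,\mathrm{div}\,\tau\|_{H^{s-\beta}}^2$ and nonlinear remainders. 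The genuinely new ingredient is the third contribution.

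For the new cross term I would differentiate in time and substitute the equations with $a=0$, namely $\partial_t u=-\mathbb{P}(u\cdot\nabla u)+\mathbb{P}\,\mathrm{div}\,\tau$ and $\partial_t\tau=-u\cdot\nabla\tau-Q(\nabla u,\tau)-\Lambda^{2\beta}\tau+D(u)$. The decisive point is the linear self-interaction $\langle D(u),-\Lambda^{-2+2\beta}\nabla u\rangle$: distributing the self-adjoint multiplier $\Lambda^{-1+\beta}$ into both slots and using $\mathrm{div}\,u=0$ with the symmetry identity $\langle D(v),\nabla v\rangle=\frac12\|\nabla v\|_{L^2}^2$ gives
\[
\langle D(u),-\Lambda^{-2+2\beta}\nabla u\rangle=-\tfrac12\|\Lambda^\beta u\|_{L^2}^2 .
\]
This supplies precisely the low-frequency velocity dissipation absent from the first two contributions. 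Since $\beta\le 1$, one has $\|\nabla u\|_{H^{s-\beta}}^2+\|\Lambda^\beta u\|_{L^2}^2\gtrsim\|\Lambda^\beta u\|_{H^{s+1-2\beta}}^2$ (the top slots coincide and the low slot is recovered from $\|\Lambda^\beta u\|_{L^2}^2$), so the two velocity dissipations combine to reconstitute the $\frac{k}{2}\|\Lambda^\beta u\|_{H^{s+1-2\beta}}^2$ term of $D_0$.

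It then remains to absorb the errors. The nonlinear remainders from all three contributions are estimated by the Kato--Ponce commutator and product inequalities (Lemmas \ref{lemma4} and \ref{lemma5}) and the Gagliardo--Nirenberg inequality (Lemma \ref{lemma3}); each is bounded by $(\|u\|_{H^s}+\|\tau\|_{H^s})D_0$, hence by $\delta D_0$ via the smallness of Theorem \ref{1theo1}, and is absorbed for $\delta$ small. The linear errors of the third contribution must be controlled using only the norms present in $D_0$, as there is no spare derivative on this zeroth-order term: the stress self-coupling coming from $\mathbb{P}\,\mathrm{div}\,\tau$ is of order $2\beta$ in $\tau$, hence $\lesssim\|\Lambda^\beta\tau\|_{L^2}^2$; and for the delicate base-level term, rewriting $\langle\Lambda^{2\beta}\tau,-\Lambda^{-2+2\beta}\nabla u\rangle$ as a pairing of $\Lambda^{3\beta-1}\tau$ against $\Lambda^\beta u$ and using $\|\Lambda^{3\beta-1}\tau\|_{L^2}\lesssim\|\Lambda^\beta\tau\|_{H^s}$ (valid since $0\le 2\beta-1\le s$) lets Young's inequality split it into $\eta\|\Lambda^\beta u\|_{L^2}^2+C_\eta\|\Lambda^\beta\tau\|_{H^s}^2$. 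Carrying the prefactor $2k$, all these positive terms are absorbed into $-2\|\Lambda^\beta\tau\|_{H^s}^2$ and into the dissipation already gained, upon first choosing $k$ small and then $\delta$ small.

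The main obstacle I anticipate stems from the inviscid momentum equation: no diffusion acts on $u$, so the highest velocity norm supplied by the coupled dissipation is only $\|\Lambda^{s+1-\beta}u\|_{L^2}$, one derivative short of the $\|\Lambda^{s+1}u\|_{L^2}$ that a naive bound on the top-order convective commutator $\langle[\Lambda^s,u\cdot\nabla]u,\Lambda^s u\rangle$ would require. Overcoming this is the crux, and it relies on $\beta>\frac12$ exactly as in the estimate of $I_1$ in Theorem \ref{1theo1}: Gagliardo--Nirenberg (Lemma \ref{lemma3}) bounds the commutator by $\|\Lambda^{s+1-\beta}u\|_{L^2}^2\|u\|_{H^s}$, which sits inside $\delta D_0$. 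I therefore expect the interplay between this loss of one velocity derivative and the base-frequency control of the new cross term to be the principal technical difficulty, the remaining estimates being routine adaptations of the computations in Theorem \ref{1theo1}.
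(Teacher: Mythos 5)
Your proposal is correct and follows essentially the same route as the paper: the paper likewise reuses the differential inequality of Theorem \ref{1theo1} for the $H^s$ energy and the $\langle-\nabla u,\tau\rangle_{H^{s-\beta}}$ coupling, and its only new computation is the time derivative of $\langle\tau,-\Lambda^{-2+2\beta}\nabla u\rangle$, where the pairing of $D(u)$ with $-\Lambda^{-2+2\beta}\nabla u$ yields the low-frequency dissipation $\tfrac12\|\Lambda^\beta u\|_{L^2}^2$ and the remaining terms $J_1$--$J_4$ are bounded exactly as you describe (nonlinear terms by $\delta D_0$, the linear term by $\|\Lambda^{3\beta-1}\tau\|_{L^2}\|\Lambda^\beta u\|_{L^2}$ split via Young, and the Leray term by $\|\Lambda^\beta\tau\|_{L^2}^2$). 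Your identification of the $\beta>\tfrac12$ Gagliardo--Nirenberg bound for the convective commutator as the crux, and the reassembly of $\|\nabla u\|_{H^{s-\beta}}^2+\|\Lambda^\beta u\|_{L^2}^2\gtrsim\|\Lambda^\beta u\|_{H^{s+1-2\beta}}^2$, matches the paper's argument.
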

\begin{proof}
A simple calculation ensures that
\begin{align*}
&\quad\frac{d}{dt}\left\langle\tau,-\Lambda^{-2+2 \beta} \nabla u\right\rangle+\frac{1}{2}\left\|\Lambda^{\beta} u\right\|_{L^{2}}^{2} \\ \nonumber
&=\left\langle G,-\Lambda^{-2+2 \beta} \nabla u\right\rangle+\left\langle -\Lambda^{2 \beta} \tau,-\Lambda^{-2+2 \beta} \nabla u\right\rangle-\left\langle \mathbb{P}F,\Lambda^{-2+2 \beta} \operatorname{div} \tau\right\rangle-\left\langle \mathbb{P}\operatorname{div} \tau,\Lambda^{-2+2 \beta} \operatorname{div} \tau\right\rangle\\ \nonumber
&=J_1+J_2+J_3+J_4,
\end{align*}
where $G=-Q(\nabla u,\tau)-u\cdot\nabla\tau$ and $F=-u\cdot\nabla u$. One can yield the following estimates
\begin{align*}
J_1
\lesssim\  & \|Q(\nabla u,\tau)\|_{L^{\frac{2}{2-\beta}}}\|\Lambda^{2\beta-1}u\|_{L^\frac{2}{\beta}}+\|u\cdot\nabla\tau\|_{L^{\frac{2}{2-\beta}}}\|\Lambda^{2\beta-1}u\|_{L^\frac{2}{\beta}}\\
\lesssim\  & \|\nabla u\|_{L^2}\|\tau\|_{L^\frac{2}{1-\beta}}\|\Lambda^{\beta}u\|_{L^2}+\|\nabla \tau\|_{L^2}\|u\|_{L^\frac{2}{1-\beta}}\|\Lambda^{\beta}u\|_{L^2}\\
\lesssim\  & \|\nabla u\|_{L^2}\|\Lambda^{\beta}\tau\|_{L^{2}}\|\Lambda^{\beta}u\|_{L^2}+\|\nabla \tau\|_{L^2}\|\Lambda^{\beta}u\|_{L^{2}}\|\Lambda^{\beta}u\|_{L^2},
\end{align*}
and
\begin{align*}
J_2\lesssim \|\Lambda^{3\beta-1}\tau\|_{L^2}\|\Lambda^{\beta}u\|_{L^2}.
\end{align*}
Moreover, we find
\begin{align*}
J_3\lesssim\  \|u\cdot\nabla u\|_{L^{\frac{2}{2-\beta}}}\|\Lambda^{2\beta-1}\tau\|_{L^\frac{2}{\beta}}
\lesssim\   \|\nabla u\|_{L^2}\|\Lambda^{\beta}u\|_{L^{2}}\|\Lambda^{\beta}\tau\|_{L^2},~~~J_4\lesssim  \|\Lambda^{\beta}\tau\|_{L^2}^2.
\end{align*}
Collecting the bounds for $J_1$ through $J_4$, we find 
\begin{align}\label{311}
\frac{d}{dt}\left\langle\tau,-\Lambda^{-2+2 \beta} \nabla u\right\rangle+\frac{1}{2}\left\|\Lambda^{\beta} u\right\|_{L^{2}}^{2} \le C\left(\delta+\frac{1}{100}\right) \|\Lambda^{\beta}u\|_{L^2}^{2}+C(\|\Lambda^{3\beta-1}\tau\|_{L^2}^{2}+\|\Lambda^{\beta}\tau\|_{L^2}^{2}).
\end{align}
By (\ref{311}) and Theorem \ref{1theo1}, we finish the proof.
\end{proof}

Next we establish the $L^2$ time decay rate in following proposition.
\begin{prop}{\label{3prop2}}
  Under the same conditions as in Proposition \ref{3prop1}, if additionally $(u_0,\tau_0)\in \dot B_{2,\infty}^{-1}$, then there exists $C>0$ such that for every $t>0$, there holds
\begin{align*}
\|\Lambda^{s_1}(u,\tau)\|_{H^{s-s_{1}}}\le C(1+t)^{-\frac{1+s_{1}}{2\beta}},
\end{align*}
where $0\le s_{1}\le\beta$.
\end{prop}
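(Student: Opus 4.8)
The plan is to run Schonbek's Fourier splitting method on the energy--dissipation inequality $\frac{d}{dt}E_{0}+D_{0}\le 0$ from Proposition \ref{3prop1}, to boost the low-frequency control with the assumption $(u_{0},\tau_{0})\in\dot{B}^{-1}_{2,\infty}$, and then to climb from the base level $s_{1}=0$ up to $s_{1}=\beta$ by repeating the argument with $\Lambda^{s_{1}}$-weighted functionals. Write $w=(u,\tau)$. I would first record that, since $k$ is small, $2\beta-1\in(0,1)$ and $2\beta-1\le s$, the cross terms in $E_{0}$ are of lower order ($\|\Lambda^{-2+2\beta}\nabla u\|_{L^{2}}\lesssim\|u\|_{H^{s}}$), so $E_{0}$ is comparable to $\|w\|_{H^{s}}^{2}$.

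The crucial preliminary is to propagate the negative Besov norm: using the Duhamel formula for \eqref{eq2} together with the product estimate of Lemma \ref{lemma5} and the uniform $H^{s}$ bound from Theorem \ref{1theo1}, I would show $\sup_{t\ge0}\|w(t)\|_{\dot{B}^{-1}_{2,\infty}}\le C$. A dyadic splitting then yields the low-frequency bound $\int_{|\xi|\le r}|\xi|^{2\sigma}|\widehat{w}|^{2}\,d\xi\lesssim r^{2\sigma+2}\|w\|_{\dot{B}^{-1}_{2,\infty}}^{2}$ for every $\sigma\ge0$, which is the engine of the decay.

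For the base case $s_{1}=0$, set the ball radius by $r(t)^{2\beta}=\lambda/(1+t)$. On $\{|\xi|\ge r(t)\}$ the $\tau$-dissipation satisfies $(1+|\xi|^{2})^{s}|\xi|^{2\beta}\ge r(t)^{2\beta}(1+|\xi|^{2})^{s}$, while for $u$ one checks that $(1+|\xi|^{2})^{1-2\beta}|\xi|^{2\beta}$ is increasing in $|\xi|$, hence bounded below by its value $\gtrsim r(t)^{2\beta}$ at $|\xi|=r(t)$; therefore $D_{0}\gtrsim r(t)^{2\beta}\big(E_{0}-\int_{|\xi|\le r(t)}(1+|\xi|^{2})^{s}|\widehat{w}|^{2}\,d\xi\big)$. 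Feeding the Besov bound ($\sigma=0$) into the correction turns this into $\frac{d}{dt}E_{0}+\frac{\alpha}{1+t}E_{0}\lesssim(1+t)^{-1-1/\beta}$ with $\alpha=\alpha(\lambda)$; choosing $\lambda$ (hence $\alpha$) larger than $1/\beta$ and integrating against $(1+t)^{\alpha}$ gives $E_{0}\lesssim(1+t)^{-1/\beta}$, i.e. $\|w\|_{H^{s}}\lesssim(1+t)^{-1/(2\beta)}$.

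To reach $s_{1}\in(0,\beta]$, I would repeat the scheme with the weighted functional $E_{0}^{(s_{1})}=\|\Lambda^{s_{1}}w\|_{H^{s-s_{1}}}^{2}+2k(\text{the same cross terms carrying an extra }\Lambda^{s_{1}})$ and its dissipation $D_{0}^{(s_{1})}$. The same splitting produces the correction $\int_{|\xi|\le r}|\xi|^{2s_{1}}|\widehat{w}|^{2}\,d\xi\lesssim r^{2s_{1}+2}\sim(1+t)^{-(s_{1}+1)/\beta}$, exactly the target rate, and the differential inequality integrates as before. The main obstacle is to make $D_{0}^{(s_{1})}$ genuinely dominate $E_{0}^{(s_{1})}$ on high frequencies, and it has two faces: (i) because $u$ carries no diffusion of its own, its dissipation is produced only through the coupling term $\langle\tau,-\Lambda^{-2+2\beta}\nabla u\rangle$, so the awkward weight $(1+|\xi|^{2})^{s+1-2\beta}|\xi|^{2\beta}$ must be handled carefully for $\beta>\tfrac12$; and (ii) weighting by $\Lambda^{s_{1}}$ generates error terms (from the $-\Lambda^{2\beta}\tau$ pairing and from the nonlinearities $Q$ and the convective terms) that have to be absorbed into $D_{0}^{(s_{1})}$, which is precisely where the restriction $s_{1}\le\beta$ together with the already-established base decay are used to close the estimate.
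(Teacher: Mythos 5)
Your Fourier-splitting mechanics are sound: the choice $r(t)^{2\beta}=\lambda/(1+t)$, the monotonicity of $(1+|\xi|^{2})^{1-2\beta}|\xi|^{2\beta}$ used to make $D_{0}$ dominate $E_{0}$ off the low-frequency ball, and the integration of the resulting differential inequality are all correct, and if the uniform bound $\sup_{t\ge0}\|(u,\tau)(t)\|_{\dot B^{-1}_{2,\infty}}\le C$ were available, your scheme would indeed deliver the optimal rates in essentially one pass. The genuine gap is precisely that ``crucial preliminary''. It cannot be proved from the Duhamel formula, the product estimates of Lemma \ref{lemma5}, and the uniform $H^{s}$ bound of Theorem \ref{1theo1} alone: any propagation estimate of this type reduces to controlling $\int_{0}^{t}\|(F,G)\|_{\dot B^{-1}_{2,\infty}}\,dt'\lesssim\int_{0}^{t}\|(u,\tau)\|_{L^{2}}\|\nabla(u,\tau)\|_{L^{2}}\,dt'$, and with only the smallness bound $\|(u,\tau)\|_{H^{s}}\lesssim\delta$ together with the dissipation integral $\int_{0}^{\infty}D_{0}\,dt\lesssim\delta^{2}$, Cauchy--Schwarz yields at best a bound of order $\delta^{2}t^{1/2}$, which grows in time. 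The linear semigroup cannot rescue this either: the $u$-equation carries no damping, and at low frequencies the coupled semigroup is merely bounded, not time-integrable, as an operator on $\dot B^{-1}_{2,\infty}$, so the Duhamel integral of a source that has not yet been shown to decay cannot be summed uniformly. In short, the uniform negative-Besov bound needs decay of the solution, while in your plan decay is deduced from that bound: the argument is circular.

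The paper breaks this circle by making the Besov propagation the \emph{last} step of a bootstrap rather than the first. In Step 1 it integrates the pointwise Fourier identity for $(\widehat u,\widehat\tau)$ in time, bounds the initial-data contribution on $S_{1}(t)$ by the assumption $(u_{0},\tau_{0})\in\dot B^{-1}_{2,\infty}$, and estimates the nonlinear low-frequency contribution crudely by $(1+t)^{-\frac{1}{2\beta}+\frac12}$ via $|S_{1}(t)|^{1/2}$ and $\bigl(\int_{0}^{t}D_{0}\,dt'\bigr)^{1/2}$; this gives the weak rate $E_{0}\lesssim(1+t)^{-\frac{1}{2\beta}+\frac12}$. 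Steps 2--3 upgrade this to $E_{0}\lesssim(1+t)^{-\frac{1}{2\beta}}$ and $E_{\beta}\lesssim(1+t)^{-\frac{1}{2\beta}-1}$ using the auxiliary $\beta$-order functionals $E_{\beta},D_{\beta}$ and a Gronwall argument on $N(t)=\sup_{t'\le t}(1+t')^{\frac{1}{2\beta}}E_{0}(t')$. Only then does Step 4 propagate $\|(u,\tau)\|_{\dot B^{-1}_{2,\infty}}$, because the intermediate rates make $\|(F,G)\|_{L^{1}}\lesssim(1+t)^{-\frac{1}{2\beta}-\frac12}$ time-integrable (this is where $\beta<1$ enters), and the resulting uniform Besov bound is finally fed back into the splitting to reach the optimal exponents. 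If you wish to keep your cleaner two-stage structure, you must insert some such intermediate decay argument before asserting the uniform $\dot B^{-1}_{2,\infty}$ bound; as written, that step fails.
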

\begin{proof}
Since we focus on the long-time behavior of solutions, $t$ will be taken to be sufficiently large. We only consider the  critical case for $s=2\beta$. We firstly split the proof of into the following four steps.

\textbf{Step 1: inital 0-order time decay rate}

  Denote $S_{1}(t)=\left\{\xi:|\xi|^{2\beta}\le C_{2}(1+t)^{-1}\right\}$, $C_{2}$ is large enough. By Proposition \ref{3prop1}, we get
  \begin{align}\label{321}
  \frac{d}{dt}E_{0}(t)+\frac{C_{2}}{1+t}\left(\frac{k}{2}\|u\|_{H^{2\beta}}^{2}+\| \tau\|_{H^{2\beta}}^{2}\right)\leq \frac{C}{1+t}{\int_{S_{1}(t)}|\widehat{u}(\xi)|^{2}+|\widehat \tau}(\xi)|^{2}d\xi.
  \end{align}  
Applying
Fourier transform to (\ref{eq2}), we have
\begin{align*}
\left\{\begin{array}{l}\widehat{u}^{j}_{t}+i\xi_{j}\widehat{P}-i\xi_{k}\widehat{\tau}^{ jk}=\widehat{F}^{j},\\ [1ex] \widehat{\tau}^{jk}_{t}+|\xi|^{2\beta}\widehat{\tau}^{jk}-\frac{i}{2}(\xi_{k}\widehat{u}^{j}+ \xi_{j}\widehat{u}^{k})=\widehat{G}^{jk}.\end{array}\right.
\end{align*}
Then we duduce
\begin{align}\label{323}
\frac{1}{2}\frac{d}{dt}(| \widehat{u}|^{2}+|\widehat{\tau}|^{2})+|\xi|^{2\beta}|\widehat{\tau}|^{2}=\mathcal{R}e[\widehat{F} \cdot\bar{\widehat{u}}]+\mathcal{R}e[\hat{G}\bar{\widehat{\tau}}].
\end{align}
Integrating (\ref{323}) in time on $[0,t]$, we get
\begin{align}\label{324}
|\widehat{u}|^{2}+|\widehat{\tau}|^{2}\lesssim (|\widehat{u}_{0}|^{2}+|\widehat{\tau}_{0}|^{2})+\int_{0}^{t}|\widehat{F}\cdot\bar{\widehat{u}}|+|\widehat{G}\cdot\bar{\widehat{\tau}}|dt^{\prime}.
\end{align}
Integrating (\ref{324}) over $S_{1}(t)$ with $\xi$, then we derive
\begin{align}\label{325}
\int_{S_{1}(t)}|\widehat{u}|^{2}+|\widehat{\tau}|^{2}d\xi\lesssim \int_{S_{1}(t)}|\widehat{u}_{0}|^{2}+|\widehat{\tau}_{0}|^{2}d\xi+\int_{S_{1}(t)}\int_{0}^{t}|\widehat{F}\cdot\bar{\widehat{u}}|+|\widehat{G}\cdot\bar{\widehat{\tau}}|dt^{\prime}d\xi.   
\end{align}
Due to the fact $E(0)<\infty$ and $(u_0,\tau_0)\in \dot B_{2,\infty}^{-1}$, we find
\begin{align}\label{326}
\int_{S_{1}(t)}(|\hat{u}_{0}|^{2}+|\hat{\tau}_{0}|^{2})d\xi &\lesssim\sum_{j\leq\log_{2}\left[\frac{4}{3}C_{2}^{\frac{1}{2\beta}}(1+t)^{-\frac{1}{2\beta}}\right]}\int_{\mathbb{R}^{2}}2\varphi^{2}(2^{-j}\xi)\left(|\widehat{u}_{0}|^{2}+|\widehat{\tau}_{0}|^{2}\right)d\xi \\ \nonumber
 & \lesssim\sum_{j\leq\log_{2}\left[\frac{4}{3}C_{2}^{\frac{1}{2\beta}}(1+t)^{-\frac{1}{2\beta}}\right]}\left(\|\dot{\Delta}_{j}u_{0}\|_{L^{2}}^{2}+\|\dot{\Delta}_{j}\tau_{0}\|_{L^{2}}^{2}\right) \\ \nonumber
 & \lesssim (1+t)^{-\frac{1}{\beta}}\|(u_{0},\tau_{0})\|_{\dot{B}_{2,\infty}^{-1}}^{2}.\nonumber
\end{align}
By Minkowski's inequality, we arrive at
\begin{align}\label{327}
\int_{S_{1}(t)}\int_{0}^{t}|\widehat{F}\cdot\bar{\widehat{u}}|+|\widehat{G}\cdot\bar{\widehat{\tau}}|dt^{\prime}d\xi=& \int_{0}^{t}\int_{S_{1}(t)}|\widehat{F}\cdot\bar{\widehat{u}}|+|\widehat{G}\cdot\bar{\widehat{\tau}}|d\xi dt^{\prime} \\\nonumber
\lesssim &\ |S_{1}(t)|^{\frac{1}{2}}\int_{0}^{t}\|F\|_{L^1}\|u\|_{L^2}+\|G\|_{L^1}\|\tau\|_{L^2}dt^{\prime}\\\nonumber
\lesssim &\ (1+t)^{-\frac{1}{2\beta}}\int_{0}^{t}\left(\|u\|_{L^2}^2+\|\tau\|_{L^2}^2\right)\left(\|\nabla u\|_{L^2}+\|\nabla\tau\|_{L^2}\right)dt^{\prime}\\\nonumber
\lesssim &\ (1+t)^{-\frac{1}{2\beta}+\frac{1}{2}}\left(\int_{0}^{t}D_{0}(s)dt^{\prime}\right)^{\frac{1}{2}}\\\nonumber
\lesssim &\ (1+t)^{-\frac{1}{2\beta}+\frac{1}{2}}.\nonumber
\end{align}
Inserting (\ref{326}) and (\ref{327}) into (\ref{325}), we have
\begin{align}\label{328}
{\int_{S_{1}(t)}|\hat{u}(\xi)|^{2}+|\hat \tau}(\xi)|^{2}d\xi\lesssim (1+t)^{-\frac{1}{2\beta}+\frac{1}{2}}.
\end{align}
Plugging (\ref{328}) into (\ref{321}) gives rise to 
\begin{align*}
  \frac{d}{dt}E_{0}(t)+\frac{C_{2}}{1+t}\left(\frac{k}{2}\|u\|_{H^{2\beta}}^{2}+\| \tau\|_{H^{2\beta}}^{2}\right)\lesssim (1+t)^{-(\frac{1}{2\beta}+\frac{1}{2})}.
  \end{align*}  
Consequently, we get the initial time decay rate
\begin{align}\label{3210}
E_{0}(t)\lesssim (1+t)^{-\frac{1}{2\beta}+\frac{1}{2}}.
  \end{align}
 
\textbf{Step 2: inital $\beta$-order decay rate}

Time decay rate in (\ref{3210}) is slow, in order to improve the time decay rate, we need to introduce $E_{\beta}$ and $D_{\beta}$:
\begin{align*}
E_{\beta}&=\left\|(\Lambda ^{\beta}u, \Lambda ^{\beta}\tau)\right\|_{H^{\beta}}^{2}+2 k\left\langle-\Lambda ^{2\beta-1}\nabla  u, \Lambda ^{2\beta-1}\tau\right\rangle_{H^{1-\beta}},\\
D_{\beta}&=\frac{k}{2}\left\|\nabla\Lambda^{2\beta-1} u\right\|_{H^{1-\beta}}^{2}+\left\|\Lambda^{2\beta} \tau\right\|_{H^{\beta}}^{2},
\end{align*}
where $k$ is a sufficiently small constant. Next we want to study the large-time behavior of $E_{\beta}$. Thanks to $\frac{1}{2}<\beta<1$, we obtain
\begin{align}\label{3211}
\quad&\frac{1}{2}\frac{d}{dt}\|(\Lambda^{\beta} u,\Lambda^{\beta} \tau)\|_{L^{2}}^{2}+\|\Lambda^{2\beta}\tau\|_{L^{2}}^{2}\\\nonumber
=&-\langle \Lambda^{\beta} (u\cdot \nabla u),\Lambda^{\beta} u \rangle
-\langle \Lambda^{\beta} (u\cdot \nabla \tau),\Lambda^{\beta} \tau\rangle
-\langle \Lambda^{\beta} Q(\nabla u,\tau), \Lambda^{\beta}\tau\rangle\\\nonumber
\lesssim &\ \|\Lambda^{2\beta} u\|_{L^{2}}\|u\|_{L^{\frac{1}{\beta-\frac{1}{2}}}}\|\nabla u\|_{L^{\frac{1}{1-\beta}}}
+\|\Lambda^{2\beta} \tau\|_{L^{2}}\|u\|_{L^{\frac{1}{\beta-\frac{1}{2}}}}\|\nabla \tau\|_{L^{\frac{1}{1-\beta}}}\\\nonumber
&+\|\Lambda^{2\beta} \tau\|_{L^{2}}\|\tau\|_{L^{\frac{1}{\beta-\frac{1}{2}}}}\|\nabla u\|_{L^{\frac{1}{1-\beta}}}\\\nonumber
\lesssim &\ \|u\|_{H^{2\beta}}\|\Lambda^{2\beta} u\|_{L^{2}}^{2}+\|u\|_{H^{2\beta}}\|\Lambda^{2\beta}\tau\|_{L^{2}}^{2}+\|\tau\|_{H^{2\beta}}\|\Lambda^{2\beta}\tau\|_{L^{2}}\|\Lambda^{2\beta}u\|_{L^{2}}\\
\lesssim &\ \delta D_{\beta}. \nonumber
\end{align}
Along the same line, for any $k>0$, we have
\begin{align}\label{3212}
&\frac{d}{dt}\langle\Lambda ^{2\beta-1} u,k\Lambda ^{2\beta-1}\mathrm{div}\tau\rangle+\frac{k}{2}\|\nabla\Lambda^{2\beta-1} u\|_{L^{2}}^{2} \\\nonumber
=\ &-k\langle\Lambda ^{2\beta-1}\mathbb{P}(u\cdot\nabla u),\Lambda ^{2\beta-1}\mathrm{div}\tau\rangle+k\langle\Lambda ^{2\beta-1}\mathbb{P}\mathrm{div}\tau,\Lambda ^{2\beta-1}\mathrm{div}\tau \rangle\\\nonumber
&-k\langle\Lambda ^{2\beta-1}\mathrm{div}(u\cdot\nabla\tau),\Lambda ^{2\beta-1}u\rangle
-k\langle \Lambda ^{2\beta-1}\mathrm{div}Q(\nabla u,\tau),\Lambda ^{2\beta-1}u\rangle\\\nonumber
&-k\langle\Lambda ^{2\beta-1}(-\Delta)^\beta\mathrm{div}\tau,\Lambda ^{2\beta-1}u\rangle\\\nonumber
\le\  & C\|u\|_{H^{s}}\|\Lambda^{2\beta}\tau\|_{L^{2}}\|\Lambda^{2\beta}u\|_{L^{2}}+C\|u\|_{H^{s}}\|\Lambda^{2\beta}u\|_{L^{2}}\|\Lambda^{2\beta}\tau\|_{L^{2}}\\\nonumber
&+k\|\tau\|_{H^{s}}\|\Lambda^{2\beta}u\|_{L^{2}}^2+k\|\Lambda^{2\beta}\tau\|_{L^{2}}^2+Ck\|\Lambda^{2\beta}u\|_{L^{2}}\|\Lambda^{4\beta-1}\tau\|_{L^{2}}\\\nonumber
\le\  & C\delta D_{\beta}+\frac{k}{100}\|\Lambda^{2\beta}u\|_{L^{2}}^{2}+kC\|\Lambda^{2\beta} \tau\|_{H^{\beta}}^{2}.\nonumber
\end{align}
By (\ref{212}) and (\ref{214}) in the Theorem \ref{1theo1}, we arrive at
\begin{align}\label{3213}
\quad\frac{1}{2}\frac{d}{dt}\|(\Lambda^{2\beta} u,\Lambda^{2\beta} \tau)\|_{L^{2}}^{2}+\|\Lambda^{3\beta}\tau\|_{L^{2}}^{2}
\lesssim \ \delta D_{\beta},
\end{align}
and
\begin{align}\label{3214}
\frac{d}{dt}\langle\Lambda ^{\beta} u,k\Lambda ^{\beta}\mathrm{div}\tau\rangle+\frac{k}{2}\|\nabla\Lambda^{\beta} u\|_{L^{2}}^{2} 
\le C\delta D_{\beta}+Ck\|\Lambda^{\beta+1}\tau\|_{H^{3\beta-1}}^{2}+\frac{k}{100}\|\Lambda^{\beta+1}u\|_{L^{2}}^{2}.
\end{align}
Combining (\ref{3211})-(\ref{3214}) and taking $k>0$ sufficiently small, then we find
\begin{align*}
\frac{d}{dt}E_{\beta}+D_{\beta}\le 0,
\end{align*}
which together with (\ref{3210}) leads to
\begin{align}\label{3216}
  \frac{d}{dt}E_{\beta}(t)+\frac{C_{2}}{1+t}\left(\frac{k}{2}\|\Lambda^{\beta}u\|_{H^{\beta}}^{2}+\|\Lambda^{\beta}\tau\|_{H^{\beta}}^{2}\right)\lesssim &(1+t)^{-1}\int_{S_{1}(t)}|\xi|^{2\beta}\left(|\widehat{u}(\xi)|^{2}+|\widehat {\tau}(\xi)|^{2}\right)d\xi\\ \nonumber
  \lesssim &(1+t)^{-\frac{1}{2\beta}-\frac{3}{2}}.\nonumber
  \end{align}  
We infer that
\begin{align}\label{3217}
  E_{\beta}(t)\lesssim (1+t)^{-\frac{1}{2\beta}-\frac{1}{2}}.
  \end{align} 

\textbf{Step 3: improve inital $0$-order time decay rate}

Plugging (\ref{324})-(\ref{326}) into (\ref{321}), we achieve
\begin{align*}
  \frac{d}{dt}E_{0}(t)&+\frac{C_{2}}{1+t}\left(\frac{k}{2}\|u\|_{H^{2\beta}}^{2}+\| \tau\|_{H^{2\beta}}^{2}\right)\lesssim (1+t)^{-1-\frac{1}{\beta}}\\\nonumber
  &+(1+t)^{-1-\frac{1}{2\beta}}\int_{0}^{t}\left(\|u\|_{L^2}^2+\|\tau\|_{L^2}^2\right)\left(\|\nabla u\|_{L^2}+\|\nabla\tau\|_{L^2}\right)dt^{\prime},\nonumber
  \end{align*} 
which implies
\begin{align}\label{3219}
(1+t)^{1+\frac{1}{2\beta}}\frac{d}{dt}E_{0}(t)&+C_{2}(1+t)^{\frac{1}{2\beta}}\left(\frac{k}{2}\|u\|_{H^{2\beta}}^{2}+\| \tau\|_{H^{2\beta}}^{2}\right)\\\nonumber
&\lesssim (1+t)^{-\frac{1}{2\beta}}+\int_{0}^{t}\left(\|u\|_{L^2}^2+\|\tau\|_{L^2}^2\right)\left(\|\nabla u\|_{L^2}+\|\nabla\tau\|_{L^2}\right)dt^{\prime}.\nonumber
\end{align}
Integrating (\ref{3219}) in time on $[0,t]$, we deduce that
\begin{align*}
(1+t)^{\frac{1}{2\beta}}E_{0}(t)\lesssim (1+t)^{-\frac{1}{2\beta}}+\int_{0}^{t}\left(\|u\|_{L^2}^2+\|\tau\|_{L^2}^2\right)\left(\|\nabla u\|_{L^2}+\|\nabla\tau\|_{L^2}\right)dt^{\prime}.
\end{align*}
Taking $\displaystyle N(t)=\sup_{0\le t^{\prime}\le t}(1+t^{\prime})^{\frac{1}{2\beta}}E_{0}(t^{\prime})$, then we find
\begin{align*}
N(t)\le C+C\int_{0}^{t}(1+t^{\prime})^{-\frac{1}{2\beta}}N(t^{\prime})(\|\nabla u\|_{L^2}+\|\nabla\tau\|_{L^2})dt^{\prime}.
\end{align*}
Applying Gronwall's inequality, we infer that for any $t>0$, $N(t)<\infty$, which gives rise to
\begin{align}\label{3222}
  E_{0}(t)\lesssim (1+t)^{-\frac{1}{2\beta}}.
\end{align}
Combining (\ref{3222}) with (\ref{3216}), we deduce that 
\begin{align}\label{3224}
  E_{\beta}(t)\lesssim (1+t)^{-\frac{1}{2\beta}-1}.
\end{align}

\textbf{Step 4: optimal $0$-order time decay rate  }

Then we prove the solution of (\ref{eq2}) belongs to some negative index Besov space. Applying $\dot\Delta_{j}$ to (\ref{eq2}), we find
\begin{align*}
 \left\{\begin{array}{l}\dot{\Delta}_{{j}}u_{t}+ \nabla\dot{\Delta}_{j}P-\text{div}\ \dot{\Delta}_{j}\tau=\dot{\Delta}_{j}F,\\ [1ex] \dot{\Delta}_{j}\tau_{t}+(-\Delta)^{\beta}\dot{\Delta}_{j}\tau-\dot{\Delta}_{j}D(u)=\dot{\Delta}_{j}G.\end{array}\right.
\end{align*}
Then we get
\begin{align}\label{3226}
\frac{d}{dt}(\|\dot{\Delta}_{j}u\|_{L^{2}}^{2}+\|\dot{\Delta}_{j}\tau\|_{L^{2} }^{2})+2\|\Lambda^{\beta}\dot{\Delta}_{j}\tau\|_{L^{2}}^{2}\lesssim \|\dot{\Delta}_{j}F\| _{L^{2}}\|\dot{\Delta}_{j}u\|_{L^{2}}+\|\dot{\Delta}_{j}G\|_{L^{2}}\|\dot{ \Delta}_{j}\tau\|_{L^{2}}.
\end{align}
Multiplying (\ref{3226}) by $2^{-2j}$ and taking $l^{\infty}$ norm, we have
\begin{align*}
\frac{d}{dt}\left(\|u\|_{\dot{B}^{-1}_{2,\infty}}^{2}+\|\tau\|_{\dot{B}^{-1}_{2,\infty}}^{2}\right)\lesssim \|F\|_{\dot{B}^{-1}_{2,\infty}}\|u\|_{ \dot{B}^{-1}_{2,\infty}}+\|G\|_{\dot{B}^{-1}_{2,\infty}}\|\tau\|_{ \dot{B}^{-1}_{2,\infty}}.  
\end{align*}
Let $\displaystyle M(t)=\sup_{0\le t^{\prime}\le t}\left(\|u\|_{\dot{B}^{-1}_{2,\infty}}+\|\tau\|_{\dot{B}^{-1}_{2,\infty}}\right)$, then we yield
\begin{align*}
M^{2}(t)\leq CM^{2}(0)+CM(t)\int_{0}^{t}\left(\|F\|_{\dot{B}^{-1}_{2,\infty}}+\|G\|_{\dot{B}^{-1}_{2,\infty}}\right)dt^{\prime}. 
\end{align*}
Note that $ L^{1}\hookrightarrow\dot{B}^{-1}_{2,\infty}$, we conclude, for any $t> 0$, that
\begin{align}\label{3229}
\int_{0}^{t}(\|F\|_{\dot{B}^{-1}_{2,\infty}}+\|G\|_{\dot{B}^{-1}_{2,\infty}})dt^{\prime}&\lesssim  \int_{0}^{t}(\|F\|_{L^1}+\|G\|_{L^1})dt^{\prime}\\\nonumber
&\lesssim  \int_{0}^{t}(\|u\|_{L^2}+\|\tau\|_{L^2})(\|\nabla u\|_{L^2}+\|\nabla\tau\|_{L^2})dt^{\prime}\\\nonumber
&\lesssim  \int_{0}^{t}(1+t^{\prime})^{-\frac{1}{2\beta}-\frac{1}{2}}dt^{\prime}<\infty.\nonumber
\end{align}
Hence, we deduce $M(t)<C$. From this we can obtain the optimal time decay rate for $E_{0}$. By (\ref{3222}), (\ref{3224}) and (\ref{3229}), we arrive at
\begin{align}\label{3230}
\int_{S_{1}(t)}\int_{0}^{t}|\widehat{F}\cdot\bar{\widehat{u}}|+|\widehat{G}\cdot\bar{\widehat{\tau}}|dt^{\prime}d\xi=& \int_{0}^{t}\int_{S_{1}(t)}|\widehat{F}\cdot\bar{\widehat{u}}|+|\widehat{G}\cdot\bar{\widehat{\tau}}|d\xi dt^{\prime} \\\nonumber
\lesssim & \int_{0}^{t}\left(\|F\|_{L^1}\int_{S_{1}(t)}|{\widehat{u}}|d\xi+\|G\|_{L^1}\int_{S_{1}(t)}|{\widehat{\tau}}|d\xi\right
) dt^{\prime} \\\nonumber
\lesssim & \ (1+t)^{-\frac{1}{2\beta}}\int_{0}^{t}\left[\left(\|F\|_{L^1}+\|G\|_{L^1}\right)\left(\int_{S_{1}(t)}|\widehat{u}|^{2}+|\widehat{\tau}|^{2}d\xi\right)^{\frac{1}{2}}\right]dt^{\prime}\\\nonumber
\lesssim &\ (1+t)^{-\frac{1}{\beta}}M(t)\int_{0}^{t}(1+t)^{-\frac{1}{2\beta}-\frac{1}{2}}dt^{\prime}\\\nonumber
\lesssim &\ (1+t)^{-\frac{1}{\beta}}.\nonumber
\end{align}
Thanks to (\ref{321}), (\ref{326}), (\ref{3216}) and (\ref{3230}), we achieve
\begin{align}\label{3231}
 E_{0}(t)\lesssim  (1+t)^{-\frac{1}{\beta}},\quad E_{\beta}(t)\lesssim  (1+t)^{-\frac{1}{\beta}-1},
  \end{align}
  which together with interpolation between Sobolev spaces gives the results.
 \end{proof}

 Now we introduce the energy and dissipation functionals for $(u,\tau)$ as follows:
 \begin{align*}
  \widetilde{E}_{s}&=(1+t)^{a^{\prime}}\left\|\Lambda^{s}(u, \tau)\right\|_{L^{2}}^{2}+k\left\langle\Lambda^{s-\beta} \tau,-\nabla \Lambda^{s-\beta} u\right\rangle,\\
\widetilde{D}_{s}&=(1+t)^{a^{\prime}}\|\Lambda^{s+\beta} \tau\|_{L^{2}}^{2}+\frac{k}{4}\|\nabla \Lambda^{s-\beta} u\|_{L^{2}}^{2},
 \end{align*}
 where $a^{\prime}=2-\frac{1}{\beta}\in [0,1)$ and $k>0$ is a small enough constant. Next we will prove the time decay rate for the highest derivative of the solution to (\ref{eq2}).
  \begin{prop}\label{3prop3}
Under the same conditions as in Proposition \ref{3prop2}, then there exists $C>0$ such that for every  $t>0$, there holds
\begin{align*}
\left\|\Lambda^{s}(u, \tau)\right\|_{L^{2}} \leq C(1+t)^{-\frac{s+1}{2 \beta}}.
\end{align*}  
\end{prop}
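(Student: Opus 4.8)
The plan is to run a time-weighted energy estimate at the top order, closed by a Fourier splitting argument, with the weight $a'=2-\frac1\beta$ tuned so that the growth it produces is reabsorbed by the dissipation. Working at the critical level $s=2\beta$ (the general case being identical), I would first differentiate $\widetilde E_s$ in time. Using the equation \eqref{eq2}, integration by parts, and the commutator/product estimates of Lemma \ref{lemma4} and Lemma \ref{lemma5}, together with the top-order identities behind \eqref{212} and \eqref{214} and the smallness from Theorem \ref{1theo1}, all the nonlinear contributions are of the form $\delta\widetilde D_s$ and get absorbed. Differentiating the weight $(1+t)^{a'}$ produces the single troublesome term, so the outcome is
\begin{align*}
\frac{d}{dt}\widetilde E_s+\widetilde D_s\lesssim a'(1+t)^{a'-1}\|\Lambda^s(u,\tau)\|_{L^2}^2,
\end{align*}
where the cross term in $\widetilde E_s$ supplies, through $\partial_t\tau=D(u)-\Lambda^{2\beta}\tau+G$, the velocity dissipation $\frac k4\|\nabla\Lambda^{s-\beta}u\|_{L^2}^2$ appearing in $\widetilde D_s$.

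Next I would split frequencies along $S_1(t)=\{|\xi|^{2\beta}\le C_2(1+t)^{-1}\}$. On $S_1^c$ the high-frequency part of the weighted top-order norm is dominated by the dissipation: writing $|\xi|^{2s}=|\xi|^{2(s+\beta)}|\xi|^{-2\beta}$ for $\tau$ and $|\xi|^{2s}=|\xi|^{2(s-\beta+1)}|\xi|^{-2(1-\beta)}$ for $u$, and using $|\xi|^{-2\beta}\le (1+t)/C_2$ there, one obtains
\begin{align*}
(1+t)^{a'}\!\!\int_{S_1^c}\!|\xi|^{2s}(|\widehat u|^2+|\widehat\tau|^2)\,d\xi\le \kappa(1+t)\widetilde D_s,\qquad \kappa=\frac1{C_2}+\frac{4}{k}C_2^{-\frac{1-\beta}{\beta}},
\end{align*}
the velocity power matching because $a'+\frac{1-\beta}{\beta}=1$ precisely when $a'=2-\frac1\beta$. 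Hence $\widetilde D_s\gtrsim \frac1{\kappa(1+t)}\big(\widetilde E_s-(1+t)^{a'}\int_{S_1}|\xi|^{2s}(|\widehat u|^2+|\widehat\tau|^2)\,d\xi\big)$, and since $\widetilde E_s$ is coercive (the cross term being lower order for $k$ small), choosing $C_2$ large makes $\kappa$ small and converts the estimate into
\begin{align*}
\frac{d}{dt}\widetilde E_s+\frac{c_0}{1+t}\widetilde E_s\lesssim \frac{(1+t)^{a'}}{1+t}\int_{S_1}|\xi|^{2s}(|\widehat u|^2+|\widehat\tau|^2)\,d\xi,
\end{align*}
with $c_0$ as large as we wish.

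For the low-frequency remainder I would use $|\xi|^{2s}\le (C_2(1+t)^{-1})^{s/\beta}$ on $S_1$ together with the already-proven zeroth-order decay $\|(u,\tau)\|_{L^2}^2\lesssim E_0(t)\lesssim(1+t)^{-1/\beta}$ from Proposition \ref{3prop2} (equivalently the bound $M(t)<\infty$ in $\dot B^{-1}_{2,\infty}$), giving $\int_{S_1}|\xi|^{2s}(|\widehat u|^2+|\widehat\tau|^2)\,d\xi\lesssim (1+t)^{-(s+1)/\beta}$ and hence a right-hand side $\lesssim(1+t)^{a'-1-(s+1)/\beta}$. Integrating the resulting linear differential inequality with the integrating factor $(1+t)^{c_0}$ (taking $c_0$ large enough that the exponent is positive) yields $\widetilde E_s\lesssim(1+t)^{a'-(s+1)/\beta}$, and therefore $\|\Lambda^s(u,\tau)\|_{L^2}^2\lesssim(1+t)^{-a'}\widetilde E_s\lesssim(1+t)^{-(s+1)/\beta}$, which is the claim.

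I expect the main obstacle to be the Fourier-splitting absorption in the second step: one must verify that the weight exponent $a'=2-\frac1\beta$ makes the high-frequency part of $a'(1+t)^{a'-1}\|\Lambda^s(u,\tau)\|_{L^2}^2$ balance the dissipation $\widetilde D_s$ (weighted for $\tau$, unweighted for $u$) up to a constant $\kappa$ that is small for large $C_2$, while simultaneously keeping $\widetilde E_s$ coercive and all nonlinear terms absorbed uniformly. Tracking these competing constants in the correct order ($k$ small first, for coercivity and the energy estimate, then $C_2$ large for the splitting) is the delicate point; once the inequality $\frac{d}{dt}\widetilde E_s+\frac{c_0}{1+t}\widetilde E_s\lesssim(1+t)^{a'-1-(s+1)/\beta}$ is in hand, the conclusion is routine.
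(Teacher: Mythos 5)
Your overall strategy coincides with the paper's own proof: the same functionals $\widetilde E_s,\widetilde D_s$ with the weight $a'=2-\frac1\beta$, the same Fourier splitting along $S_1(t)$ with the exponent identity $a'-1+\frac{1-\beta}{\beta}=0$ doing the work for the velocity part, and the same use of Proposition \ref{3prop2} to bound the low-frequency remainder by $(1+t)^{-(s+1)/\beta}$. The genuine gap is your coercivity claim for $\widetilde E_s$. For $\frac12<\beta<1$ the cross term $k\langle\Lambda^{s-\beta}\tau,-\nabla\Lambda^{s-\beta}u\rangle$ is \emph{not} lower order: by Plancherel it is controlled only by $\int|\xi|^{2(s-\beta)+1}|\widehat\tau|\,|\widehat u|\,d\xi$, which involves $s+1-\beta>s$ derivatives and cannot be dominated by $(1+t)^{a'}\|\Lambda^{s}(u,\tau)\|_{L^2}^2$, no matter how small $k$ is. You invoke this coercivity twice — once to convert $\widetilde D_s\gtrsim\frac{1}{\kappa(1+t)}\bigl(\widetilde E_s-\text{low-frequency part}\bigr)$ into the Gronwall inequality, and once at the very end when writing $\|\Lambda^s(u,\tau)\|_{L^2}^2\lesssim(1+t)^{-a'}\widetilde E_s$ — and in both places the step fails as written. (A minor related overstatement: the nonlinear terms are not all absorbable as $\delta\widetilde D_s$; interpolation of $\|\Lambda^{1+\beta}\tau\|_{L^2}^2$ forces an extra term of order $(1+t)^{-\frac2\beta-1}$, though this is harmless since it matches your right-hand side.)

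The repair is exactly the extra device the paper uses. Split the cross term by Young's inequality with the correct time weights: using $2\beta\ge1$, $k\bigl|\langle\Lambda^{s-\beta}\tau,-\nabla\Lambda^{s-\beta}u\rangle\bigr|\le \epsilon\,(1+t)^{a'}\|\Lambda^{s}(u,\tau)\|_{L^2}^2+C_\epsilon\,k\,(1+t)^{-\frac{a'}{2\beta-1}}\|(u,\tau)\|_{L^2}^2$. The first piece is absorbable because $(1+t)^{a'}\ge1$ and $k,\epsilon$ are small; the second piece, by Proposition \ref{3prop2}, decays like $(1+t)^{-\frac{a'}{2\beta-1}-\frac1\beta}=(1+t)^{-\frac2\beta}$, which is precisely the target order of $\widetilde E_s$ — and this works only because of the exact identity $\frac{a'}{2\beta-1}=\frac1\beta$, i.e. because $a'=\frac{2\beta-1}{\beta}$. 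This is what the paper's final two displays accomplish (there, at the level $s=2\beta$, the split reads $k(1+t)^{\frac2\beta+1}\langle\Lambda^{\beta}\tau,-\nabla\Lambda^{\beta}u\rangle\lesssim k(1+t)^{\frac1\beta+3}\|\Lambda^{2\beta}(u,\tau)\|_{L^2}^2+k(1+t)^{\frac1\beta-1}\|(u,\tau)\|_{L^2}^2$, the first term being absorbed into the left-hand side), carried out through two iterated time-weighted integrations instead of an integrating factor. With this replacement of your coercivity step, your argument goes through; without it, the cross term is simply unaccounted for.
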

 
 \begin{proof}
 We also solely consider the case with $s=2\beta$. It is easy to derive 
 \begin{align}\label{3235}
 \frac{d}{dt}\widetilde{E}_{2\beta}+2\widetilde{D}_{2\beta}=&\ a^{\prime}(1+t)^{a^{\prime}-1}\|\Lambda^{2\beta}(u,\tau)\|_{L^{2}}^{2}+2(1+t)^ {a^{\prime}}(\langle\Lambda^{2\beta}F,\Lambda^{2\beta}u\rangle+\langle\Lambda^{s}G,\Lambda^{2\beta} \tau\rangle)\\ \nonumber
 &+k\langle(-\Lambda^{\beta}G-\Lambda^{3\beta}\tau),\nabla \Lambda^{\beta}u\rangle+k\langle\Lambda^{\beta}\mathbb{P}(F+\text{div }\tau),\text{div }\Lambda^{\beta}\tau\rangle.\nonumber
 \end{align}
Due to (\ref{3231}), we observe that
 \begin{align}\label{3236}
 (1+t)^{a}\langle\Lambda^{2\beta}F,\Lambda^{2\beta}u\rangle& \lesssim (1+t)^{a^{\prime}}\|\Lambda^{\beta+1} u\|_{L^{2}}^{2}\|\Lambda^{2\beta}u\|_{L^{2}}\\ \nonumber
 &\lesssim (1+t)^{a^{\prime}}(1+t)^{-\frac{1}{2}-\frac{1}{2\beta}}\|\Lambda^{ 1+\beta}u\|_{L^{2}}^{2}\\ \nonumber
 &\lesssim (1+t)^{\frac{3}{2}-\frac{3}{2\beta}}\|\Lambda^{\beta+1}u \|_{L^{2}}^{2}\\ \nonumber
 &\lesssim\delta \widetilde{D}_{2\beta}.\nonumber
 \end{align}
 According to Lemma \ref{lemma3}, we get
 \begin{align}\label{3238}
\|\Lambda^{1+\beta}\tau\|_{L^{2}}^{2}& \lesssim \|\tau\|_{L^{2}}^{\frac{2\beta-1}{3\beta}}\|\Lambda^{3\beta}\tau\|_{L^{2}}^{\frac{1+\beta}{3\beta}}\\\nonumber
 & \lesssim (1+t)^{-\frac{1}{\beta}-1}\|\tau\|_{L^{2}}^{2}+(1+t)^{a^{\prime}} \|\Lambda^{3\beta}\tau\|_{L^{2}}^{2}\\ \nonumber
 &\lesssim (1+t)^{-\frac{2}{\beta}-1}+(1+t)^{a^{\prime}}\|\Lambda^{3 \beta}\tau\|_{L^{2}}^{2},\nonumber
 \end{align} 
 from which we deduce that
 \begin{align}\label{3237}
(1+t)^{a^{\prime}}\langle\Lambda^{2\beta}G,\Lambda^{2\beta}\tau\rangle &\lesssim (1+t)^{a^{\prime}}\|\Lambda^{\beta}G\|_{L^{2}}\|\Lambda^{3\beta} \tau\|_{L^{2}}\\\nonumber
 &\lesssim (1+t)^{a^{\prime}}\|\Lambda^{3\beta}\tau\|_{L^{2}}(\|u\|_{L^{ \infty}}\|\Lambda^{\beta+1}\tau\|_{L^{2}}+\|\tau\|_{L^{\infty}}\|\nabla \Lambda^{\beta}u\|_{L^{2}})\\\nonumber
&\le\delta\widetilde{D}_{2\beta}+C(1+t)^{a^{\prime}}(\|u\|_{L^{ \infty}}^{2}\|\Lambda^{\beta+1}\tau\|_{L^{2}}^{2}+\|\tau\|_{L^{\infty}}^{2} \|\nabla\Lambda^{\beta}u\|_{L^{2}}^{2})\\ \nonumber
&\le 2\delta\widetilde{D}_{2\beta}+C(1+t)^{a^{\prime}-\frac{1}{\beta}}\| \Lambda^{\beta+1}\tau\|_{L^{2}}^{2}\\\nonumber
&\le 2\delta\widetilde{D}_{2\beta}+C(1+t)^{-\frac{2}{\beta}-1}.
 \end{align}
Similarly, we obtain
\begin{align*}
&k\left\langle  {\left( {-{\Lambda }^{\beta }G - {\Lambda }^{3\beta }\tau }\right) ,\nabla {\Lambda }^{\beta }u}\right\rangle   + k\left\langle  {{\Lambda }^{\beta }\mathbb{P}\left( {F + \operatorname{div}\tau }\right) ,\operatorname{div}{\Lambda }^{\beta }\tau }\right\rangle\\
\le &  C( \delta  + k) {\widetilde{D}}_{2\beta }+\frac{k}{100}\| \Lambda^{\beta+1}u\|_{L^{2}}^{2} + C (1 + t)^{-\frac{2}{\beta}-1},
\end{align*}
which together with (\ref{3236}) and (\ref{3237}) leads to 
\begin{align*}
\frac{d}{dt}{\widetilde{E}}_{2\beta } + 2{\widetilde{D}}_{2\beta } \leq  a^{\prime}{\left( 1 + t\right) }^{a^{\prime} - 1}\|{{\Lambda }^{2\beta}\left( u,\tau \right) }\|_{{L}^{2}}^{2} + C\left( {\delta  + k}\right) {\widetilde{D}}_{2\beta }+\frac{k}{100}\| \Lambda^{\beta+1}u\|_{L^{2}}^{2} + C{\left( 1 + t\right) }^{-\frac{2}{\beta}-1}.
\end{align*}
Recall that $S_{1}(t)=\left\{\xi:|\xi|^{2\beta}\le C_{2}(1+t)^{-1}\right\}$. For sufficiently large $C_2$, we conclude that
\begin{align*}
\frac{d}{dt}{\widetilde{E}}_{2\beta } + \frac{kC_{2}}{4}(1+t)^{a^{\prime}-1}\|\Lambda^{2\beta}(u,\tau)\|_{L^2} \lesssim  {\left( 1 + t\right) }^{-\frac{2}{\beta}-1}.
\end{align*}
Applying the time weighted energy estimate, taking  $kC_2$ big enough, by (\ref{3231}), then we obtain
\begin{align*}
  &(1+t)^{\frac{2}{\beta}+1}\widetilde{E}_{2\beta}+\int_{0}^{t}\frac{kC_{2}}{4}(1+t^{\prime})^{\frac{1}{\beta}+2}\|\Lambda^{2\beta}(u,\tau)\|_{L^{2}}^{2}dt^{\prime} \\
 & \lesssim (1+t)+\int_0^t(1+t^{\prime})^{\frac{2}{\beta}}\langle\Lambda^{\beta}\tau,-\nabla\Lambda^{\beta}u\rangle dt^{\prime}+\int_{0}^{t}(1+t^{\prime})^{\frac{1}{\beta}+2}\|\Lambda^{2\beta}(u,\tau)\|_{L^2}^2dt^{\prime} \\
 & \lesssim (1+t)+\int_0^t(1+t^{\prime})^{\frac{1}{\beta}+2}\|\Lambda^{2\beta}(u,\tau)\|_{L^2}^2dt^{\prime}+\int_0^t(1+t^{\prime})^{\frac{1}{\beta}-2}\|(u,\tau)\|_{L^2}^2dt^{\prime} \\
 & \lesssim (1+t)+\int_{0}^{t}(1+t^{\prime})^{\frac{1}{\beta}+2}\|\Lambda^{2\beta}(u,\tau)\|_{L^{2}}^{2}dt^{\prime}.
\end{align*}
By virtue of the time weighted energy estimate again, we achieve
\begin{align*}
(1+t)^{\frac{1}{\beta}+3}\|\Lambda^{2\beta}(u,\tau)\|_{L^{2}}^{2} 
&\lesssim (1+t)+k(1+t)^{\frac{2}{\beta}+1}\langle\Lambda^{\beta}\tau,-\nabla\Lambda^{\beta}u\rangle\\
 & \lesssim (1+t)+k(1+t)^{\frac{1}{\beta}+3}\|\Lambda^{2\beta}(u,\tau)\|_{L^2}^2+k(1+t)^{\frac{1}{\beta}-1}\|(u,\tau)\|_{L^2}^2 \\
 & \lesssim (1+t)+k(1+t)^{\frac{1}{\beta}+3}\|\Lambda^{2\beta}(u,\tau)\|_{L^{2}}^{2},
\end{align*}
which give rises to
\begin{align*}
\left\|\Lambda^{2\beta}(u, \tau)\right\|_{L^{2}} \leq C(1+t)^{-\frac{1}{2 \beta}-1}.
\end{align*}
This finishes the proof of Proposition \ref{3prop3}.
\end{proof}
\subsection{The lower bound}
Furthermore, we study the lower bound of the time decay rate. Our results can be stated as follows. 
\begin{prop}\label{3prop4}
  Under the same conditions as in Proposition \ref{3prop2}, if additionally $\left(u_{0}, \tau_{0}\right) \in H^{s+\beta}$  and  $0<\left|\int_{\mathbb{R}^{2}}\left(u_{0}, \tau_{0}\right) d x\right|$, then there exists  $C_{\beta}>0$ such that
\begin{align*}
\left\|\Lambda^{s}(u, \tau)\right\|_{L^{2}} \geq \frac{C_{\beta}}{2}(1+t)^{-\frac{s+1}{2 \beta}}.
\end{align*}
\end{prop}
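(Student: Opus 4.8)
The plan is to localise in frequency and show that the solution cannot decay faster than the fractional heat rate on the scale $|\xi|\sim(1+t)^{-\frac{1}{2\beta}}$. Writing $g(\xi,t)=|\widehat u(\xi,t)|^2+|\widehat\tau(\xi,t)|^2$, I have
\[
\left\|\Lambda^{s}(u,\tau)\right\|_{L^2}^2=\int_{\mathbb R^2}|\xi|^{2s}g(\xi,t)\,d\xi\ \ge\ \int_{A(t)}|\xi|^{2s}g(\xi,t)\,d\xi,
\]
where $A(t)=\{\xi:\tfrac12 r(t)\le|\xi|\le r(t)\}$ and $r(t)=\big(C_2(1+t)^{-1}\big)^{\frac{1}{2\beta}}$. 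On $A(t)$ one has $|\xi|^{2s}\gtrsim(1+t)^{-\frac{s}{\beta}}$ and $|A(t)|\gtrsim(1+t)^{-\frac1\beta}$, so it suffices to prove the pointwise lower bound $g(\xi,t)\ge c_0>0$ on $A(t)$, with $c_0$ depending only on $\beta$ and $\big|\int_{\mathbb R^2}(u_0,\tau_0)\,dx\big|$. Integrating then yields $\left\|\Lambda^{s}(u,\tau)\right\|_{L^2}^2\gtrsim c_0(1+t)^{-\frac{s+1}{\beta}}$, which is the claim.

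To obtain this frequency-localised lower bound I would run the energy mechanism of Proposition \ref{3prop1} at the level of a single Fourier mode; this plays the role of the announced weighted energy estimate, avoiding a spectral decomposition of the linearised operator. Define the Fourier analogue of $E_0$,
\[
\mathcal E(\xi,t)=|\widehat u|^2+|\widehat\tau|^2+2k\,\mathcal{R}e\big(\overline{\widehat\tau}\cdot(-i\xi)|\xi|^{2\beta-2}\widehat u\big).
\]
Since the coupling term is bounded by $2k|\xi|^{2\beta-1}|\widehat u||\widehat\tau|$ and $2\beta-1\ge0$, for $|\xi|\le1$ and $k$ small one has $\mathcal E\sim g$. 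Differentiating in $t$ and using the Fourier form \eqref{323} together with the computation in Proposition \ref{3prop1}, every linear contribution to $\frac{d}{dt}\mathcal E$ is controlled by $C|\xi|^{2\beta}\mathcal E$: the genuine dissipation is $-2|\xi|^{2\beta}|\widehat\tau|^2-k|\xi|^{2\beta}|\widehat u|^2$, while the only delicate linear cross term, coming from $\langle-\Lambda^{2\beta}\tau,-\Lambda^{-2+2\beta}\nabla u\rangle$, has size $|\xi|^{4\beta-1}|\widehat u||\widehat\tau|\lesssim|\xi|^{2\beta}\mathcal E$ precisely because $\beta\ge\frac12$. Hence
\[
\frac{d}{dt}\mathcal E(\xi,t)\ \ge\ -C|\xi|^{2\beta}\mathcal E(\xi,t)-C\big(|\widehat F(\xi,t)|+|\widehat G(\xi,t)|\big)\sqrt{\mathcal E(\xi,t)}.
\]

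Solving this differential inequality (passing to $\sqrt{\mathcal E}$) and using $|\widehat F(\xi,s)|\le\|F\|_{L^1}$ gives
\[
\sqrt{\mathcal E(\xi,t)}\ \ge\ e^{-C|\xi|^{2\beta}t}\sqrt{\mathcal E(\xi,0)}-C\int_0^t e^{-C|\xi|^{2\beta}(t-s)}\big(\|F(s)\|_{L^1}+\|G(s)\|_{L^1}\big)\,ds.
\]
For $\xi$ with $|\xi|^{2\beta}\le C_2(1+t)^{-1}$ the exponentials are bounded below by a constant, and by Propositions \ref{3prop2}--\ref{3prop3} one has $\|F(s)\|_{L^1}+\|G(s)\|_{L^1}\lesssim\|(u,\tau)\|_{L^2}\|\nabla(u,\tau)\|_{L^2}\lesssim\delta^2(1+s)^{-\frac{3}{2\beta}}$, whose time integral is finite and of size $O(\delta^2)$. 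Since $u_0,\tau_0\in L^1$ makes $\widehat U_0$ continuous at the origin (with $U_0=(u_0,\tau_0)$), we get $\mathcal E(\xi,0)\gtrsim|\widehat U_0(0)|^2=\big|\int(u_0,\tau_0)\,dx\big|^2$ once $|\xi|\le r(t)$ and $t$ is large. Choosing the data, or $\delta$, so that the mean dominates the $O(\delta^2)$ correction, I conclude $\mathcal E(\xi,t)\ge c_0>0$, hence $g(\xi,t)\ge c_0$, on $A(t)$; small $t$ is handled by continuity and non-degeneracy of the solution.

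The main obstacle is this final balancing step: the Duhamel correction generated by $F$ and $G$ must be strictly dominated by the conserved zeroth mode $\int(u_0,\tau_0)\,dx$, which is exactly why the hypothesis $0<\big|\int(u_0,\tau_0)\,dx\big|$ is imposed and why the sharp decay and integrability of the nonlinear forcing supplied by Propositions \ref{3prop2}--\ref{3prop3} (together with the extra regularity $(u_0,\tau_0)\in H^{s+\beta}$, used to make the highest-order nonlinear contributions integrable) are indispensable. A secondary point needing care is checking that $\mathcal E$ does not dissipate faster than $|\xi|^{2\beta}\mathcal E$; this is precisely where the borderline exponent $4\beta-1\ge 2\beta$, i.e. $\beta\ge\frac12$, enters, so the argument degenerates at the endpoint, consistent with the stated range.
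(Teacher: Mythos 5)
Your argument is correct in substance but takes a genuinely different route from the paper. The paper decomposes $(u,\tau)=(u_L,\tau_L)+(u_N,\tau_N)$ into the solution of the linear system \eqref{341} and a nonlinear remainder with zero initial data: it obtains the lower bound $\|(u_L,\tau_L)\|_{\dot H^{s_1}}^2\ge C_0^2(1+t)^{-\frac{1+s_1}{\beta}}$ from the explicit Fourier representation of the semigroup (nonzero mean plus continuity of $(\widehat u_0,\widehat\tau_0)$ at $\xi=0$), and then devotes the bulk of the proof to Fourier-splitting and time-weighted energy estimates showing $\|\Lambda^{s}(u_N,\tau_N)\|_{L^2}^2\le \tfrac{C_0^2}{4}(1+t)^{-\frac{s+1}{\beta}}$ --- this is where the hypothesis $(u_0,\tau_0)\in H^{s+\beta}$ is genuinely consumed, since the remainder estimates use the dissipation $\|\Lambda^{s+\beta}\tau_N\|_{L^2}^2$ --- before concluding by the triangle inequality. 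You instead prove a pointwise-in-frequency lower bound on the annulus $|\xi|\sim(1+t)^{-\frac{1}{2\beta}}$ via a single-mode Gronwall/Duhamel inequality and integrate over the annulus. This is shorter and more elementary: it bypasses the remainder analysis and the weighted estimates altogether, and in fact your proof never uses $H^{s+\beta}$ (your parenthetical that it makes "the highest-order nonlinear contributions integrable" does not correspond to anything in your argument; what both proofs really need is the implicit assumption $(u_0,\tau_0)\in L^1$, so that the mean is defined and $\widehat{(u_0,\tau_0)}$ is continuous at the origin). A further simplification you missed: the cross term in $\mathcal{E}$ is superfluous for a \emph{lower} bound, since the plain identity \eqref{323} already gives $\frac{d}{dt}g\ge -2|\xi|^{2\beta}g-2\bigl(|\widehat F|+|\widehat G|\bigr)\sqrt{g}$; extra dissipation only needs to be bounded, not produced, so the delicate $|\xi|^{4\beta-1}$ term you worried about never has to appear.

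Two points of rigor deserve mention, though the paper's own proof shares both. First, the claim $\|F\|_{L^1}+\|G\|_{L^1}\lesssim\delta^2(1+s)^{-\frac{3}{2\beta}}$ overstates Propositions \ref{3prop2}--\ref{3prop3}: the decay constants there depend on $\|(u_0,\tau_0)\|_{\dot B^{-1}_{2,\infty}}$, which is finite but not small, so as written the Duhamel correction is $O(1)$ rather than $O(\delta^2)$; a positive power of $\delta$ can be recovered by interpolating one factor against the uniform bound $\|(u,\tau)\|_{H^s}\lesssim\delta$ from Theorem \ref{1theo1}, at the cost of a slightly worse (still integrable) rate. Second, your final balancing step --- the forcing integral must be strictly dominated by the contribution of the mean $c_0$ --- is a smallness-relative-to-$c_0$ requirement not literally implied by the stated hypotheses; the paper makes exactly the same move when it takes "$\delta+(\delta C_2+k_0)C_2$ small enough" against $C_0\sim c_0$ in \eqref{3415}--\eqref{3416}, so this is an imprecision inherited from the result itself rather than a defect specific to your approach.
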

\begin{proof}
We also consider the case for $s=2\beta$.
We first consider the linear system:
\begin{align}\label{341}
\left\{\begin{array}{l}\partial_{t}u_{L}+\nabla P_{L}-\mathrm{div}\ \tau_{L}=0,\ \mathrm{div}\ u_{L}=0,\\ [1ex] \partial_{t}\tau_{L}-D(u_{L})+(-\Delta)^{\beta}\tau_{L}=0,\\[1ex]
  u_{L}|_{t=0}=u_{0},\ \tau_{L}|_{t=0}=\tau_{0}.\end{array}\right.
\end{align}
According to Propositions \ref{3prop2}-\ref{3prop3}, one can arrive at $\|\Lambda^{s_1}(u_{L},\tau_{L})\|_{L^2}\le C(1+t)^{-\frac{1+s_1}{2\beta}}$ and $(u_{L},\tau_{L})\in L^{ \infty}\left([0,\infty),\dot{B}_{2,\infty}^{-1}\right)$. Applying Fourier transform to (\ref{341}), we get
\begin{align}\label{342}
\left\{\begin{array}{l}\partial_{t}\widehat{u}_{L}^{j}+i\xi_{j}\widehat{P}_{L}-i\xi _{k}\widehat{\tau}_{L}^{jk}=0,\\ [1ex] \partial_{t}\widehat{\tau}_{L}^{jk}+|\xi|^{2\beta}\widehat{\tau}_{L}^{jk}-\frac{i}{2}(\xi_ {k}\widehat{u}_{L}^{j}+\xi_{j}\widehat{u}_{L}^{k})=0.\end{array}\right.
\end{align}
Then we observe that
\begin{align*}
\frac{1}{2}\frac{d}{dt}\left[e^{2|\xi|^{2\beta}t}|(\widehat{u}_{L},\widehat{\tau}_{L})|^{2}\right]-| \xi|^{2\beta}e^{2|\xi|^{2\beta}t}|\widehat{u}_{L}|^{2}=0,
\end{align*}
which implies that
\begin{align*}
|\xi|^{2s_{1}}|(\widehat{u}_{L},\widehat{\tau}_{L})|^{2}=|\xi|^{2s_{1}}e^{-2|\xi|^{2\beta}t }|(\widehat{u}_{0},\widehat{\tau}_{0})|^{2}+\int_{0}^{t}2|\xi|^{2(s_{1}+\beta)}e^{-2|\xi|^{2\beta}(t-t^{\prime})}|\widehat{u}_{L}|^{2}dt^{\prime}.
\end{align*}
Due to the fact $0<c_{0}=|\int_{\mathbb{R}^{2}}(u_{0}, \tau_{0})dx|=|(\widehat{u}_{0}(0),\widehat{\tau}_{0}(0))|$, we deduce that there exists $\eta>0$ such that $|(\widehat{u}_{0}(\xi),\widehat{\tau}_{0}(\xi))|\geq\frac{c_{0}}{2}$ if $\xi\in B(0, \eta)$. Then we have 
 \begin{align}\label{343}
   \|(u_{L},\tau_{L})\|_{\dot{H}^{s_{1}}}^{2}& \geq\int_{|\xi|\leq\eta}|\xi|^{2s_{1}}e^{-2|\xi|^{2\beta}t}|(\widehat{u}_{0}, \widehat{\tau}_{0})|^{2}d\xi\\ \nonumber &\geq\frac{c_{0}^{2}}{4}\int_{|\xi|\leq\eta}|\xi|^{2s_{1}}e^{-2|\xi|^{2\beta}t}d\xi\\ \nonumber
   &\geq C_{0}^{2}(1+t)^{-\frac{1+s_{1}}{\beta}},\nonumber
 \end{align}
 where $C_0^2=\frac{c_{0}^{2}}{4}\int_{|y|\le \eta}|y|^{2s_{1}}e^{-2|y|^{2\beta}t}dy$.
 
 Taking $u_{N}=u-u_{L}, \tau_{N}= \tau-\tau_{L}$ and $P_{N}=P-P_{L}$, By Propositions \ref{3prop2}-\ref{3prop3}, we easily deduce $\|\Lambda^{s_{1}}(u_{N},\tau_{N})\|_{L^{2}}^{2} \leq C(1+t)^{-\frac{s_{1}+1}{\beta}}$ and $(u_{N},\tau_{N})\in L^{ \infty}\left([0,\infty),\dot{B}_{2,\infty}^{-1}\right)$,  Moreover, we have
 \begin{align}\label{344}
  \left\{\begin{array}{l}\partial_{t}u_{N}+\nabla P_{N}-\mathrm{div}\ \tau_{N}=F,\ \  \mathrm{div}\ u_{N}=0, \\ [1ex] \partial_{t}\tau_{N}-D(u_{N})+(-\Delta)^{\beta}\tau_{N}=G, \\[1ex]
   u_{N}|_{t=0}=\tau_{N}|_{t=0}=0.\end{array}\right. 
 \end{align}
According to the time decay rates for $(u_N,\tau_N)$ and $(u,\tau)$, we conclude from (\ref{344}) that
\begin{align*}
\frac{1}{2}\frac{d}{dt}\|(u_{N},\tau_{N})\|_{L^{2}}^ {2}+\|\Lambda^{\beta}\tau_{N}\|_{L^{2}}^{2}&=\langle F,u_{N}\rangle+ \langle G,\tau_{N}\rangle\\\nonumber
 &\lesssim \|\nabla u\|_{L^{2}}\|u\|_{L^{4}}\|u_{N}\|_{L^{4}}+\|\tau_ {N}\|_{L^{4}}\|\nabla(u,\tau)\|_{L^{2}}\|(u,\tau)\|_{L^{4}}\\ \nonumber
&\lesssim \delta(1+t)^{-\frac{1}{\beta}-1},\nonumber
\end{align*}
and
\begin{align*}
\frac{d}{dt}\langle\Lambda^{2\beta-2}\tau_{N},-\nabla u_{N}\rangle+ \frac{1}{2}\|\nabla\Lambda^{\beta-1} u_{N}\|_{L^{2}}^{2}=&\langle\Lambda^{2\beta-2}( G-(-\Delta)^{\beta}\tau_{N}),-\nabla u_{N}\rangle\\\nonumber
&-\langle\Lambda^{2\beta-2}\ \mathbb{P}(F+\text{div }\tau_{N}),\text{div }\tau_{N}\rangle\\\nonumber
\lesssim& (1+t)^{-\frac{1}{\beta}-1},\nonumber
\end{align*}
which lead to
\begin{align*}
\frac{d}{dt}[\|(u_{N},\tau_{N})\|_{L^{2}}^ {2}+2k_{0}\langle\Lambda^{2\beta-2}\tau_{N},-\nabla u_{N}\rangle]+2\|\Lambda^{\beta}\tau_{N}\|_{L^{2}}^{2}+ k_{0}\|\nabla^{\beta} u_{N}\|_{L^{2}}^{2}&\le\ C(\delta+k_{0})(1+t)^{-1-\frac{1}{\beta}}.
\end{align*}
Then we conclude that
\begin{align}\label{345}
&\frac{d}{dt}\left[\|(u_{N},\tau_{N})\|_{L^{2}}^{2}+2k_{0}\langle\Lambda^{2\beta-2}\tau_{N},-\nabla u_{N}\rangle\right]+\frac{ k_{0}C_{2}}{(1+t)}\|u_{N}\|_{L^{2}}^{2}+\frac{2C_{2}}{1+t}\|\tau_{N}\|_{L^{2}}^{2} \\\nonumber
 \lesssim &\frac{C_{2}}{1+t}\int_{S_{1}(t)}|\widehat{u_{N}}(\xi)|^{2}+|\widehat{ \tau_{N}}(\xi)|^{2}d\xi+(\delta+k_{0})(1+t)^{-\frac{1}{\beta}-1}.\nonumber
\end{align}
We deduce from a similar derivation of (\ref{328}) that
\begin{align}\label{346}
\int_{S_{1}(t)}|\widehat{u_{N}}|^{2}+|\widehat{\tau_{N}}|^{2}d \xi&\lesssim \int_{S_{1}(t)}\int_{0}^{t}|\widehat{F}\cdot\overline{\widehat{u_ {N}}}|+|\widehat{G}\cdot\overline{\widehat{\tau_{N}}}|dt^{\prime}d\xi\\ \nonumber
&\lesssim (1+t)^{-\frac{1}{\beta}}\int_{0}^{t}\|(u,\tau)\|_{L^{2}}\|\nabla(u,\tau) \|_{L^{2}}\|(u_{N},\tau_{N})\|_{\dot{B}^{-1}_{2,\infty}}dt^{\prime}\\ &\lesssim \delta(1+t)^{-\frac{1}{\beta}}.\nonumber
\end{align}
Plugging (\ref{346}) into (\ref{345}), we find
\begin{align*}
&\frac{d}{dt}[\|(u_{N},\tau_{N})\|_{L^{2}}^{2}+2k_{0}\langle\Lambda^{2\beta-2}\tau_{N},-\nabla u_{N}\rangle]+\frac{ k_{0}C_{2}}{2(1+t)}\|u_{N}\|_{L^{2}}^{2}+\frac{C_{2}}{1+t}\|\tau_{N}\|_{L^{2}}^{2} \\ \nonumber
\lesssim& \left(\delta C_2+k_{0}\right)(1+t)^{-\frac{1}{\beta}-1}.\nonumber
\end{align*}
Thanks to Propositions \ref{3prop2}-\ref{3prop3} again, we arrive at
\begin{align*}
&(1+t)^{1+\frac{1}{\beta}}\|(u_{N},\tau_{N})\|_{L^{2}}^{2}\\\nonumber
\lesssim &\ \int_{0}^{t}2k_{0}(1+t^{\prime})^{\frac{1}{\beta}}\langle\Lambda^{2\beta-2}\tau_{N},-\nabla u_{N}\rangle dt^{\prime}
+2k_{0}(1+t)^{1+\frac{1}{\beta}}\langle\Lambda^{2\beta-2}\tau_{N},\nabla u_{N}\rangle+(\delta C_2+k_{0})(1+t)\\\nonumber
 \lesssim &\ \left(\delta C_2+k_{0}\right)(1+t),
\end{align*}
which yields
\begin{align}\label{347}
\|(u_{N},\tau_{N})\|_{L^{2}}^{2}\le C(\delta C_2+k_{0})(1+t)^{-\frac{1}{\beta}}.
\end{align}
Applying $\Lambda^{s_{1}}$ to (\ref{344}), we get
\begin{align*}
\left\{\begin{array}{l}\partial_{t}\Lambda^{s_{1}}u_{N}+\nabla\Lambda^{s_{1 }}P_{N}- \mathrm{div} \Lambda^{s_{1}}\tau_{N}=\Lambda^{s_{1}}F,\\ [1ex] \partial_{t}\Lambda^{s_{1}}\tau_{N}-D(\Lambda^{s_{1}}u_{N})+(-\Delta)^{\beta}\Lambda^{s_{1}}\tau_{N}=\Lambda^{s_{1}}G.\end{array}\right.
\end{align*}
Standard energy estimate yields 
 \begin{align}\label{349}
&\frac{1}{2} \frac{d}{d t}\left[(1+t)^{a^{\prime}}\left\|\Lambda^{2\beta}(u_{N}, \tau_{N})\right\|_{L^{2}}^{2}\right]+(1+t)^{a^{\prime}}\left\|\Lambda^{3\beta} \tau_{N}\right\|_{L^{2}}^{2}\\\nonumber
&=-a^{\prime}(1+t)^{a^{\prime}-1}\left\|\Lambda^{2\beta}(u_{N}, \tau_{N})\right\|_{L^{2}}^{2}+(1+t)^{a^{\prime}}\left\langle\Lambda^{2\beta} F, 
\Lambda^{2\beta} u_{N}\right\rangle+(1+t)^{a^{\prime}}\left\langle\Lambda^{2\beta} G, \Lambda^{2\beta} \tau_{N}\right\rangle.\nonumber
 \end{align}
Notice that
 \begin{align*}
 \left\langle\Lambda^{2\beta} F, \Lambda^{2\beta} u_{N}\right\rangle=-\left\langle\Lambda^{2\beta}(u\cdot\nabla u), \Lambda^{2\beta} u_{N}\right\rangle=-\left\langle\Lambda^{2\beta}(u\cdot\nabla u), \Lambda^{2\beta} u\right\rangle+\left\langle\Lambda^{2\beta}(u\cdot\nabla u), \Lambda^{2\beta} u_{L}\right\rangle.
 \end{align*}
  Using Lemma \ref{lemma4}, Lemma \ref{lemma5} and Proposition \ref{3prop3}, we infer that
  \begin{align*}
(1+t)^{a^{\prime}}\left\langle\Lambda^{2\beta}(u\cdot\nabla u), \Lambda^{2\beta} u\right\rangle & \lesssim (1+t)^{a^{\prime}}\|\Lambda^{\beta+1}u\|_{L^2}^{2}\|\Lambda^{2\beta}u\|_{L^2}\\\nonumber
& \lesssim (1+t)^{a^{\prime}}\|\Lambda^{2\beta}u\|_{L^2}\left(\|\Lambda^{\beta+1}u_{N}\|_{L^2}^{2}+\|\Lambda^{\beta+1}u_{L}\|_{L^2}^{2}\right)\\\nonumber
& \lesssim \delta\|\Lambda^{\beta+1}u_{N}\|_{L^2}^{2}+\delta(1+t)^{-\frac{2}{\beta}-1},\nonumber
  \end{align*}
  and
  \begin{align*}
(1+t)^{a^{\prime}}\left\langle\Lambda^{2\beta}(u\cdot\nabla u), \Lambda^{2\beta} u_{L}\right\rangle&=(1+t)^{a^{\prime}}\left\langle\Lambda^{\beta}(u\cdot\nabla u), \Lambda^{3\beta} u_{L}\right\rangle\\\nonumber
& \lesssim (1+t)^{a^{\prime}}\|\Lambda^{\beta+1}u\|_{L^2}\|u\|_{L^\infty}\|\Lambda^{3\beta}u_{L}\|_{L^2}\\\nonumber
& \lesssim (1+t)^{a^{\prime}}(\|\Lambda^{\beta+1}u_{L}\|_{L^2}+\|\Lambda^{\beta+1}u_{N}\|_{L^2})\|u\|_{L^\infty}\|\Lambda^{3\beta}u_{L}\|_{L^2}\\\nonumber
& \lesssim \delta(1+t)^{-\frac{2}{\beta}-1}+\delta\|\Lambda^{\beta+1}u_{N}\|_{L^2}^{2}.\nonumber
  \end{align*}
By summarizing the above estimates, we conclude that
  \begin{align}\label{3410}
   (1+t)^{a^{\prime}}\left\langle\Lambda^{2\beta} F, \Lambda^{2\beta} u_{N}\right\rangle\lesssim \delta(1+t)^{-\frac{2}{\beta}-1}+\delta\|\Lambda^{\beta+1}u_{N}\|_{L^2}^{2}.
  \end{align}
By (\ref{3238}) and time decay rate for $\tau_N$, there holds
\begin{align*}
(1+t)^{a^{\prime}}\|u\|_{L^{\infty}}\|\Lambda^{\beta+1}\tau\|_{L^{2}}^{2}
& \lesssim (1+t)^{a^{\prime}}\|u\|_{L^{\infty}}\|\tau\|_{L^{2}}^{\frac{2(2\beta-1)}{3\beta}}\|\Lambda^{3\beta}\tau\|_{L^{2}}^{\frac{2(\beta+1)}{3\beta}}\\
& \lesssim \delta(1+t)^{a^{\prime}}\|\Lambda^{3\beta}\tau\|_{L^{2}}^{2}+\delta(1+t)^{-\frac{2}{\beta}-1}\\ \nonumber
& \lesssim \delta(1+t)^{a^{\prime}}\|\Lambda^{3\beta}\tau_N\|_{L^{2}}^{2}+\delta(1+t)^{-\frac{2}{\beta}-1}.
\end{align*}  
Then, we have 
\begin{align}\label{3411}
(1+t)^{a^{\prime}}\left\langle\Lambda^{2\beta} G, \Lambda^{2\beta} \tau_{N}\right\rangle &=(1+t)^{a^{\prime}}\left\langle\Lambda^{\beta}G, \Lambda^{3\beta} \tau_{N}\right\rangle\\\nonumber
& \lesssim  (1+t)^{a^{\prime}}\|\Lambda^{3\beta}\tau_{N}\|_{L^{2}}(\|\Lambda^{\beta+1}u\|_{L^{2}}\|\tau\|_{L^{\infty}}+\|\Lambda^{\beta+1}\tau\|_{L^{2}}\|u\|_{L^{\infty}})\\\nonumber
& \lesssim \delta(1+t)^{a^{\prime}}\|\Lambda^{3\beta}\tau_{N}\|_{L^{2}}^{2}+\delta(1+t)^{-\frac{2}{\beta}-1}+\delta\|\Lambda^{\beta+1}u_{N}\|_{L^{2}}^{2}.\nonumber
  \end{align}
Plugging  (\ref{3410}) and (\ref{3411}) into (\ref{349}) gives rise to
 \begin{align}\label{3412}
&\frac{1}{2} \frac{d}{d t}\left[(1+t)^{a^{\prime}}\left\|\Lambda^{2\beta}(u_{N}, \tau_{N})\right\|_{L^{2}}^{2}\right]+(1+t)^{a^{\prime}}\left\|\Lambda^{3\beta} \tau_{N}\right\|_{L^{2}}^{2}\\\nonumber
\le &\ a^{\prime}(1+t)^{a^{\prime}-1}\left\|\Lambda^{2\beta}(u_{N}, \tau_{N})\right\|_{L^{2}}^{2}+C\delta\|\Lambda^{\beta+1}u_{N}\|_{L^2}^{2}\\\nonumber
&+C\delta(1+t)^{a^{\prime}}\|\Lambda^{3\beta}\tau_N\|_{L^{2}}^{2}+C\delta(1+t)^{-\frac{2}{\beta}-1}.\nonumber
 \end{align}
Along the same line to the proof of (\ref{3412}), we achieve
\begin{align}\label{3413}
  &\frac{d}{dt}\langle\Lambda^{\beta}\tau_{N},-\nabla \Lambda^{\beta}u_{N}\rangle+\frac{1}{2}\|\nabla\Lambda^{\beta}u_{N}\|_{L^{2}}^{2} \\ \nonumber =&\langle\Lambda^{\beta}(G-(-\Delta)^{\beta}\tau_{N}),-\nabla\Lambda^{\beta}u_{N }\rangle+\langle\Lambda^{\beta}\mathbb{P}(F+\mathrm{div}\ \tau_{N}),\mathrm{div}\ \Lambda^{\beta}\tau_{N}\rangle\\ \nonumber
  \leq&\left(\frac{1}{100}+C\delta\right)\|\nabla\Lambda^{\beta}u_{N}\|_{L^{2}}^{2}+C\| \Lambda^{3\beta}\tau_{N}\|_{L^{2}}^{2}+C\| \Lambda^{\beta+1}\tau_{N}\|_{L^{2}}^{2}+C\delta(1+t)^{-\frac{2}{\beta}-1}.\nonumber
\end{align}
Combining (\ref{3412}) and (\ref{3413}), we get
\begin{align}\label{3414}
&\frac{d}{dt}\left[(1+t)^{a^{\prime}}\|\Lambda^{2\beta}(u_{N},\tau_{N})\|_{L^{2}}^{2}+4 \langle\Lambda^{\beta}\tau_{N},-\nabla\Lambda^{\beta}u_{N}\rangle\right]\\\nonumber
&+\frac{3}{4}(1+t)^{a^{\prime}}\| \Lambda^{3\beta}\tau_{N}\|_{L^{2}}^{2}+\|\nabla\Lambda^{\beta}u_{N}\|_{L^{2}}^{2}\\\nonumber
 \le& a^{\prime}(1+t)^{a^{\prime}-1}\|\Lambda^{2\beta}(u_{N},\tau_{N})\|_{L^{2}}^{2}+C\| \Lambda^{3\beta}\tau_{N}\|_{L^{2}}^{2}+C\delta(1+t)^{-\frac{2}{\beta}-1},\nonumber
\end{align} 
where we have used the fact that
\begin{align*}
\|\Lambda^{\beta+1}\tau_N\|_{L^{2}}^{2}
\lesssim&  \|\tau_N\|_{L^{2}}^{\frac{2(2\beta-1)}{3\beta}}\|\Lambda^{3\beta}\tau_N\|_{L^{2}}^{\frac{2(1+\beta)}{3\beta}}\\\nonumber
\le&  \frac{1}{4C}(1+t)^{a^{\prime}}\|\Lambda^{3\beta}\tau_N\|_{L^{2}}^{2}+C(\delta C_2+k_0)(1+t)^{-\frac{2}{\beta}-1}.\nonumber
\end{align*}
It follows from (\ref{3414}) that
\begin{align*}
&\frac{d}{dt}\left[(1+t)^{a^{\prime}}\|\Lambda^{2\beta}(u_{N},\tau_{N})\|_{L^{2}}^{2}+4 \langle\Lambda^{\beta}\tau_{N},-\nabla\Lambda^{s-\beta}u_{N}\rangle\right]+\frac{C_{2}}{2}(1+t)^{a^{\prime}-1}\|\Lambda^{2\beta}(\tau_{N},u_{N})\|_{L^2}^{2}\\\nonumber
\le&  C_{2}(1+t)^{a^{\prime}-1}\int_{S_{\beta}(t)}|\xi|^{4\beta}\left(|\widehat{\tau_{N}}(\xi)|^{2}+|\widehat{u_{N}}(\xi)|^{2}\right)d \xi+C[\delta+(\delta C_2+k_0)](1+t)^{-\frac{2}{\beta}-1}\\\nonumber
 \lesssim& \left[\delta+(\delta C_{2}+k_{0})C_{2}\right](1+t)^{-\frac{2}{\beta}-1}.
\end{align*}
Thanks to (\ref{347}),  we infer
\begin{align*}
&(1+t)^{\frac{2}{\beta}+1}\left[(1+t)^{a^{\prime}}\|\Lambda^{2\beta}(u_{N},\tau_{N})\|_{L^ {2}}^{2}+4\langle\Lambda^{\beta}\tau_{N},-\nabla\Lambda^{\beta}u_{N}\rangle\right]\\\nonumber
&+\frac{1}{2}\int_{0}^{t}  C_{2}\left(1+t^{\prime}\right)^{\frac{1}{\beta}+2}\left\|\Lambda^{2\beta}(\tau, u)\right\|_{L^{2}}^{2} d t^{\prime}\\\nonumber
\lesssim &\int_{0}^{t}(1+t^{\prime})^{{\frac{2}{\beta}}}\langle\Lambda^{\beta}\tau_{N},\nabla \Lambda^{\beta}u_{N}\rangle+(1+t^{\prime})^{{\frac{1}{\beta}+2}}\|\Lambda^{2\beta}(u_{N},\tau_{N})\|_{L^ {2}}^{2} d t^{\prime}+[\delta+(\delta C_{2}+k_{0})C_{2}](1+t)\\ \nonumber
\lesssim &\int_{0}^{t}(1+t^{\prime})^{\frac{1}{\beta}+2}\|\Lambda^{2\beta}(u_N,\tau_N)\|_{L^2}^2+(1+t^{\prime})^{\frac{1}{\beta}-2}\|(u_N,\tau_N)\|_{L^2}^2 d t^{\prime}+[\delta+(\delta C_{2}+k_{0})C_{2}](1+t)\\\nonumber
\lesssim &\int_{0}^{t}(1+t^{\prime})^{\frac{1}{\beta}+2}\|\Lambda^{2\beta}(u_N,\tau_N)\|_{L^2}^2+C(\delta C_2+k_{0})(1+t^{\prime})^{-2} d t^{\prime}+[\delta+(\delta C_{2}+k_{0})C_{2}](1+t)\\\nonumber
\lesssim &\int_{0}^{t}(1+t^{\prime})^{\frac{2}{\beta}+1}\|\Lambda^{2\beta}(u_N,\tau_N)\|_{L^2}^2 d t^{\prime}+[\delta+(\delta C_{2}+k_{0})C_{2}](1+t).\nonumber
\end{align*}
Taking $C_{2}$ big enough, we infer that
\begin{align*}
&\quad(1+t)^{\frac{1}{\beta}+3}\|\Lambda^{2\beta}(u_{N},\tau_{N})\|_{L^{2}}^{2}\\\nonumber
& \leq 4(1+t)^{\frac{2}{\beta}+1}\langle\Lambda^{\beta}\tau_{N},\nabla \Lambda^{\beta}u_{N}\rangle+C[\delta+(\delta C_{2}+k_{0})C_{2}](1+t)\\ \nonumber
& \leq \frac{1}{2} (1+t)^{\frac{1}{\beta}+3}\|\Lambda^{2\beta}(u_N,\tau_N)\|_{L^2}^2+C(1+t)^{\frac{1}{\beta}-1}\|(u_N,\tau_N)\|_{L^2}^2 +C[\delta+(\delta C_{2}+k_{0})C_{2}](1+t)\\\nonumber
&\leq \frac{1}{2} (1+t)^{\frac{1}{\beta}+3}\|\Lambda^{2\beta}(u_N,\tau_N)\|_{L^2}^2+C[\delta+(\delta C_{2}+k_{0})C_{2}] (1+t)^{-1}+C[\delta+(\delta C_{2}+k_{0})C_{2}](1+t)\\\nonumber
&\leq \frac{1}{2} (1+t)^{\frac{1}{\beta}+3}\|\Lambda^{2\beta}(u_N,\tau_N)\|_{L^2}^2+C[\delta+(\delta C_{2}+k_{0})C_{2}](1+t),\nonumber
\end{align*}
which gives rise to 
\begin{align}\label{3415}
\|\Lambda^{2\beta}(u_{N},\tau_{N})\|_{L^{2}}^{2}\le C[\delta+(\delta C_{2}+k_{0})C_{2}](1+t)^{-\frac{1}{\beta}-2}.
\end{align}
Taking $\delta+(\delta C_{2}+k_{0})C_{2}$ small enough, we arrive at
\begin{align}\label{3416}
\|\Lambda^{2\beta}(u_{N},\tau_{N})\|_{L^{2}}^{2}\le \frac{C_{0}^{2}}{4}(1+t)^{-\frac{1}{\beta}-2}.
\end{align}
Combining (\ref{343}) and (\ref{3416}), it comes out
\begin{align*}
\|\Lambda^{2\beta}(u,\tau)\|_{L^{2}}^{2}\ge \frac{C_{0}^{2}}{4}(1+t)^{-\frac{1}{\beta}-2}.
\end{align*}
\end{proof}
\textbf{Proof of Theorem \ref{1theo2}:} \\
By summarizing the Propositions \ref{3prop2}-\ref{3prop4}, we complete the proof of Theorem \ref{1theo2} \hfill$\Box$

	\section{Uniform vanishing damping limit}
\par
In this section, we consider the uniform vanishing damping limit for (\ref{eq2}). In Section 5.1, we will present the proof of Theorem \ref{1theo3}. In Section 5.2, we establish the uniform vanishing damping limit for (\ref{eq2}), namely Theorem \ref{1theo5}. In Section 5.3, we study the time decay rate of $\mathrm{tr}\tau^a$. Moreover, we can establish sharp uniform vanishing damping rate for $\mathrm{tr}\tau^a$, which is Theorem \ref{1theo6}.

\subsection{Time decay rates with $a\in (0,1]$ and the time integrability}
\par
We firstly study the large-time behavior of (\ref{eq2}) with $a\in (0,1]$. Due to the  presence of damping term, the estimate of energy $E_{0}$ defined in Section 3.1 is not closed. However, we can establish a time decay rate for (\ref{eq2}) by virtue of mix-order Fourier splitting method. Note that the time decay rate is uniform with respect to $a\in (0,1]$. 
Denote that
\begin{align*}
\bar{E}_{\theta}&=\|\Lambda^{\theta}(u^a,\tau^a)\|_{H^{s-\theta}}^{2}+2k \langle\nabla\Lambda^{\theta}u^a,\Lambda^{\theta}\tau^a\rangle_{H^{s-\beta-\theta}},\newline \\ \bar{D}_{\theta}&=\frac{k}{2}\|\nabla\Lambda^{\theta}u^a\|_{H^{s-\beta-\theta }}^{2}+\|\Lambda^{\beta+\theta}\tau^a\|_{H^{s-\theta}}^{2},
\end{align*}
where $\theta=0,1$. We have the following proposition.
\begin{prop}\label{4prop1}
 Let $d=2,\ a\in [0,1]$, $\frac{1}{2}\le\beta<1$ and $s\ge 1+\beta$. Let $(u^a,\tau^a)$  be a strong solution of (\ref{eq2}) with the initial data $(u_{0},\tau_{0})\in H^s$ under the condition in Theorem \ref{1theo1}. If  $\left(u_{0}, \tau_{0}\right) \in \dot{B}_{2, \infty}^{-1}$, then there exists a positive constant $C$  such that
 \begin{align*}
\left\|\Lambda^{s_1}(u^a, \tau^a)\right\|_{ H^{s-s_1}} \leq C(1+t)^{-\frac{1+s_1}{2}},
 \end{align*}
 where $s_{1}\in [0,1]$.
\end{prop}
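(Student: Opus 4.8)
The plan is to run a Fourier splitting argument on the mix-order functionals $\bar E_\theta,\bar D_\theta$ ($\theta=0,1$) introduced above. The decisive structural choice is that these functionals are built only from the coupling $\langle\nabla\Lambda^{\theta}u^a,\Lambda^{\theta}\tau^a\rangle_{H^{s-\beta-\theta}}$ and deliberately omit the auxiliary term $\langle\tau^a,-\Lambda^{-2+2\beta}\nabla u^a\rangle$ that was available for $a=0$. As noted in the introduction, that term produces the uncontrollable contribution $a\langle\tau^a,-\Lambda^{-2+2\beta}\nabla u^a\rangle$; dropping it leaves us with the velocity dissipation $\|\nabla u^a\|_{H^{s-\beta}}$ (rather than the lower-order $\|\Lambda^{\beta}u^a\|$), which is weaker at low frequencies. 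Consequently I would split at the heat scale $|\xi|^2\sim(1+t)^{-1}$ instead of the $\beta$-scale $|\xi|^{2\beta}\sim(1+t)^{-1}$ of Theorem \ref{1theo2}. Since for $|\xi|\le1$ one has $|\xi|^{2\beta}\ge|\xi|^2$, the velocity scale is binding for both fields, and the resulting rate $(1+t)^{-1/2}$ is $\beta$-independent, hence uniform in $a$.

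First I would establish the energy--dissipation inequality $\frac{d}{dt}\bar E_\theta+\bar D_\theta\le0$ for $\theta=0,1$, which follows from the estimates of Theorem \ref{1theo1}: the damping contributes $a\|\tau^a\|^2\ge0$, and the remaining $a$-dependent terms are absorbed using $a\le1$, so all constants are uniform in $a\in(0,1]$. Integrating gives $\int_0^\infty\bar D_0\,dt'<\infty$ and the uniform bound $\bar E_0(t)\le\bar E_0(0)$. Splitting on $B_{R(t)}=\{|\xi|^2\le C_2(1+t)^{-1}\}$ and using that $\bar D_0$ dominates $R(t)^2$ times the high-frequency energy, I obtain
\begin{align*}
\frac{d}{dt}\bar E_0+\frac{C_2k}{1+t}\bar E_0\lesssim\frac{C_2}{1+t}\int_{B_{R(t)}}\big(|\widehat{u^a}|^2+|\widehat{\tau^a}|^2\big)\,d\xi ,
\end{align*}
whose initial-data part is controlled by $(1+t)^{-1}\|(u_0,\tau_0)\|_{\dot B^{-1}_{2,\infty}}^2$ exactly as in \eqref{326}.

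The delicate point, and the main obstacle, is the nonlinear part of the low-frequency integral. Because only $\|\nabla u^a\|$ dissipation is available, the crude bound that uses solely the boundedness of the energy and $\int_0^\infty\bar D_0<\infty$ produces merely an $O(1)$ contribution, so the displayed inequality does not close to any algebraic rate. I would resolve this in two stages, mirroring the introduction. In the first stage one extracts a crude logarithmic decay $\|(u^a,\tau^a)\|_{L^2}\lesssim\ln^{-1/2}(e+t)$ from a preliminary application of the splitting combined with the smallness $\delta$ and the dissipation integrability; this modest gain suffices to make $\int_0^\infty\|(u^a,\tau^a)\|_{L^2}\|\nabla(u^a,\tau^a)\|_{L^2}\,dt'<\infty$, so that, using $L^1\hookrightarrow\dot B^{-1}_{2,\infty}$ and the Littlewood--Paley argument of \eqref{3229}, one gets the uniform-in-$a$ bound $(u^a,\tau^a)\in L^\infty([0,\infty),\dot B^{-1}_{2,\infty})$. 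In the second stage this membership yields $\int_{B_{R(t)}}|\widehat{u^a}|^2\lesssim(1+t)^{-1}\|u^a\|_{\dot B^{-1}_{2,\infty}}^2$, so the nonlinear low-frequency term genuinely decays; setting $N(t)=\sup_{t'\le t}(1+t')\,\bar E_0(t')$ and closing by Gronwall, as in Step 3 of Proposition \ref{3prop2}, gives $\bar E_0\lesssim(1+t)^{-1}$, which is the $s_1=0$ estimate.

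Finally I would repeat the splitting for the gradient-level functional $\bar E_1$, now feeding in the $s_1=0$ decay and using a time-weighted energy estimate in the spirit of Proposition \ref{3prop3} to obtain $\bar E_1\lesssim(1+t)^{-2}$, that is $\|\Lambda(u^a,\tau^a)\|_{H^{s-1}}\lesssim(1+t)^{-1}$. Interpolation between the $\theta=0$ and $\theta=1$ bounds then covers every $s_1\in[0,1]$ and controls the full $H^{s-s_1}$ norms. Throughout, the only use of $a$ is through $0\le a\le1$, so the constant $C$ is genuinely independent of $a$, which is exactly the uniformity asserted.
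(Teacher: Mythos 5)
Your overall architecture matches the paper's: drop the $\langle\tau^a,-\Lambda^{-2+2\beta}\nabla u^a\rangle$ correction, prove $\frac{d}{dt}\bar E_\theta+\bar D_\theta\le 0$, split at the heat scale, get a preliminary logarithmic decay, upgrade to a uniform negative-index Besov bound, and then rerun the splitting to reach $(1+t)^{-1}$ and $(1+t)^{-2}$. However, there is a genuine gap at the pivot of your argument, namely the claim in your first stage that the crude decay $\|(u^a,\tau^a)\|_{L^2}\lesssim\ln^{-1/2}(e+t)$, together with $\int_0^\infty\bar D_0\,dt<\infty$, already yields
\begin{align*}
\int_0^\infty\|(u^a,\tau^a)\|_{L^2}\,\|\nabla(u^a,\tau^a)\|_{L^2}\,dt'<\infty ,
\end{align*}
which is what the $L^1\hookrightarrow\dot B^{-1}_{2,\infty}$ argument of \eqref{3229} requires. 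This does not follow: Cauchy--Schwarz in time would need $\ln^{-1/2}(e+t)\in L^2_t$, which fails (and fails for \emph{any} logarithmic power), while $\|\nabla(u^a,\tau^a)\|_{L^2}\in L^2_t$ alone cannot compensate. Worse, even after one obtains the first algebraic rates $\|(u^a,\tau^a)\|_{L^2}\lesssim(1+t)^{-1/4}$ and $\|\nabla(u^a,\tau^a)\|_{L^2}\lesssim(1+t)^{-3/4}$, the product only decays like $(1+t)^{-1}$, which is still log-divergent, so the uniform $\dot B^{-1}_{2,\infty}$ bound cannot be extracted at that stage either.

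This is precisely why the paper's proof is longer than your two-stage plan. It must: (i) bootstrap the logarithmic decay to an arbitrary power $\ln^{-l}(e+t)$, and crucially also at the $H^1$ level, $\bar E_1\lesssim(1+t)^{-1}\ln^{-l}(e+t)$ (Steps 1--2); (ii) close a \emph{weighted} Gronwall inequality for $N(t)=\sup_{t'\le t}(1+t')^{1/2}\bar E_0(t')$, whose integrand $(1+t)^{-1}\ln^{-l/2}(e+t)$ is integrable only because $l\ge 3$ — this is where the extra $(1+t)^{-1/2}$ weight, absent from your direct integrability claim, saves the day and yields $\bar E_0\lesssim(1+t)^{-1/2}$, $\bar E_1\lesssim(1+t)^{-3/2}$ (Step 3); (iii) insert an \emph{intermediate} Besov bound in $\dot B^{-1/2}_{2,\infty}$ via $L^{4/3}\hookrightarrow\dot B^{-1/2}_{2,\infty}$, for which the relevant integrand $\|(u^a,\tau^a)\|_{L^4}\|\nabla(u^a,\tau^a)\|_{L^2}\lesssim(1+t)^{-5/4}$ \emph{is} integrable, and use it to improve the rates to $\bar E_0\lesssim(1+t)^{-5/8}$, $\bar E_1\lesssim(1+t)^{-13/8}$; and only then (iv) run the $L^1$-based $\dot B^{-1}_{2,\infty}$ argument, whose integrand is now $(1+t)^{-9/8}$, to conclude the final rates (Step 4). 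Your proposal collapses (i)--(iv) into a single unjustified step; to repair it you would need to reproduce this hierarchy (or find a genuinely different mechanism to reach a summable nonlinear flux), since the logarithmic decay by itself is quantitatively too weak to launch the Besov argument.
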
 
\begin{proof}
We only prove the case with $1+\beta\le s\le2$, The proof for case with $s>2$ can be conducted by the method in \cite{LLY}. We now divide the proof of Proposition \ref{4prop1} into the following four steps.

\textbf{Step1: inital 0-order time decay rate}

Taking $\theta=0$, we deduce from Theorem \ref{1theo1} that
\begin{align}\label{401}
\frac{d}{dt}\bar{E}_{0}+\bar{D}_{0}\le 0.
\end{align}
Denote that $S_{0}(t)=\left\{\xi:|\xi|^{2}\leq 2C_{2}\frac{f^{ \prime}(t)}{f(t)}\right\}$, where $f(t)=\ln^{3}(e+t)$ and $C_{2}$ is large enough. By (\ref{401}),
we deduce that
\begin{align}\label{402}
\frac{d}{dt}[f(t)\bar{E}_{0}(t)]+C_{2}f^{\prime}(t)\left(\frac{k}{2}\|u^a\|_{H^{s}}^{2}+\| \tau^a\|_{H^{s}}^{2}\right)\lesssim f^{\prime}(t)\int_{S_{0}(t)}|\widehat{u^a}(\xi)|^{2}+|\widehat{\tau^a}(\xi)|^{2}d\xi.
\end{align}
Applying
Fourier transform to (\ref{eq2}), we have
\begin{align}\label{403}
\left\{\begin{array}{l}\widehat{u_{t}}^{j}+i\xi_{j}\widehat{P}-i\xi_{k}\widehat{\tau}^{ jk}=\widehat{F}^{j},\\ [1ex] \widehat{\tau}^{jk}_{t}+|\xi|^{2\beta}\widehat{\tau}^{jk}+a\tau^{jk}-\frac{i}{2}(\xi_{k}\widehat{u}^{j}+ \xi_{j}\widehat{u}^{k})=\widehat{G}^{jk},\end{array}\right.
\end{align}
Integrating (\ref{403}) in time on $[0,t]$, we get
\begin{align}\label{404}
|\widehat{u^a}|^{2}+|\widehat{\tau^a}|^{2}\lesssim (|\widehat{u}_{0}|^{2}+|\widehat{\tau}_{0}|^{2})+\int_{0}^{t}|\widehat{F}\cdot\overline{\widehat{u^a}}|+|\widehat{G}\cdot\overline{\widehat{\tau^a}}|dt^{\prime}.
\end{align}
Integrating (\ref{404}) over $S_{0}(t)$ with $\xi$, then we derive
\begin{align}\label{405}
\int_{S_{0}(t)}|\widehat{u^a}|^{2}+|\widehat{\tau^a}|^{2}d\xi\lesssim \int_{S_{0}(t)}|\widehat{u}_{0}|^{2}+|\widehat{\tau}_{0}|^{2}d\xi+\int_{S_{0}(t)}\int_{0}^{t}|\widehat{F}\cdot\overline{\widehat{u^a}}|+|\widehat{G}\cdot\overline{\widehat{\tau^a}}|dt^{\prime}d\xi.   
\end{align}
Due to the fact $E(0)<\infty$ and $(u_0,\tau_0)\in \dot B_{2,\infty}^{-1}$, we find
\begin{align}\label{406}
\int_{S_{0}(t)}(|\hat{u}_{0}|^{2}+|\hat{\tau}_{0}|^{2})d\xi &\lesssim\sum_{j\leq\log_{2}\left[\frac{4}{3}C_{2}^{\frac{1}{2}}\sqrt{\frac{f^{\prime}(t)}{f(t)}}\right]}\left(\|\dot{\Delta}_{j}u_{0}\|_{L^{2}}^{2}+\|\dot{\Delta}_{j}\tau_{0}\|_{L^{2}}^{2}\right) \\ \nonumber
 & \lesssim\ \frac{f^{\prime}(t)}{f(t)}\|(u_{0},\tau_{0})\|_{\dot{B}_{2,\infty}^{-1}}^{2}.\nonumber
\end{align}
By Minkowski's inequality, we arrive at
\begin{align}\label{407}
\int_{S_{0}(t)}\int_{0}^{t}|\widehat{F}\cdot\bar{\widehat{u}}|+|\widehat{G}\cdot\bar{\widehat{\tau}}|dt^{\prime}d\xi=& \int_{0}^{t}\int_{S_{0}(t)}|\widehat{F}\cdot\bar{\widehat{u}}|+|\widehat{G}\cdot\bar{\widehat{\tau}}|d\xi dt^{\prime} \\\nonumber
\lesssim &\ \frac{f^{\prime}(t)}{f(t)}\int_{0}^{t}\left(\|u\|_{L^2}^2+\|\tau\|_{L^2}^2\right)\left(\|\nabla u\|_{L^2}+\|\nabla\tau\|_{L^2}\right)dt^{\prime}\\\nonumber
\lesssim &\ \frac{f^{\prime}(t)}{f(t)}(1+t)^{\frac{1}{2}}.\nonumber
\end{align}
By inserting (\ref{406}) and (\ref{408}) into (\ref{405}), we obtain
\begin{align}\label{408}
\int_{S_{0}(t)}|\widehat{u^a}|^{2}+|\widehat{\tau^a}|^{2}d\xi\lesssim \ln^{-\frac{1}{2}}(e+t).
\end{align}
Plugging (\ref{408}) into (\ref{402}), we have
\begin{align*}
\frac{d}{dt}[f(t)\bar{E}_{0}(t)]\lesssim f^{\prime}(t)\ln^{-\frac{1}{2}}(e+t).
\end{align*}
Consequently, we get the initial time decay rate
\begin{align*}
\bar{E}_{0}(t)\lesssim \ln^{-\frac{1}{2}}(e+t).
\end{align*}
Using the bootstrap argument from \cite{LUO2025113443}, for any $l\in N^{+}$, we obtain
\begin{align}\label{410}
\bar{E}_{0}(t)\lesssim \ln^{-l}(e+t).
\end{align}

\textbf{Step2: inital $1$-order decay rate}

Similar to the proof of (\ref{3211}), one has
\begin{align}\label{411}
\frac{1}{2}\frac{d}{dt}\|\nabla(u^a,\tau^a)\|_{L^{2}}^{2}+\|\Lambda^{\beta+1}\tau^a\|_{L^{ 2}}^{2}+a\|\nabla \tau^a\|_{L^2}^{2}&=-\langle\nabla\left( u^a\cdot\nabla\tau+Q(\nabla u^a,\tau^a)\right),\nabla\tau^a\rangle\\ \notag
&\lesssim  \delta \bar{D}_{1}.
\end{align}
Thanks to $s\ge1+\beta$, by applying Lemma \ref{lemma5}, for any $k>0$, we obtain
\begin{align}\label{412}
&\frac{d}{dt}\langle\Delta\tau^a,k\nabla u^a\rangle+ \frac{k}{2}\|\nabla^{2}u^a\|_{L^{2}}^{2}\\\nonumber
=&\ \langle\text{div }(u^a\cdot\nabla\tau^a+Q(\nabla u^a,\tau^a)+a\tau^a+(-\Delta)^{\beta}\tau^a),k\Delta u^a\rangle
-\langle\nabla\mathbb{P}(u^a\cdot\nabla u^a-\text{div }\tau^a),k\Delta\tau^a\rangle\\ \nonumber
\le &\ k\|\tau^a\|_{L^{\infty}}\|\nabla^{2} u^a\|_{L^{2}}\|\Delta u^a\|_{L^{2}}+k\|u^a\|_{L^{\infty}}\|\nabla^{2} \tau^a\|_{L^{2}}\|\Delta u^a\|_{L^{2}}
+k\|\nabla u^a\|_{L^{\frac{2}{\beta}}}\|\nabla \tau^a\|_{L^{\frac{2}{1-\beta}}}\|\Delta u^a\|_{L^{2}}\\\nonumber
&+ak\|\nabla\tau^a\|_{L^2}\|\nabla^{2}u^a\|_{L^{2}}
+k\|\nabla^{1+2\beta}\tau^a\|_{L^{2}}\|\nabla^{2}u^a\|_{L^{2}}+k\|u^a\|_{L^{\infty}}\|\nabla^{2} u^a\|_{L^{2}}\|\Delta \tau^a\|_{L^{2}}\\\nonumber
&+k\|\nabla^{2}\tau^a\|_{L^{2}}\|\nabla^{2}u^a\|_{L^{2}}\\\nonumber
\leq &\ C\delta \bar{D}_{1}+kC\|\nabla^{\beta+1}\tau^a\|_{H^{s-1}}^{2}+\frac{k}{100}\|\nabla^{2}u^a\|_{L^{2}}^{2}+Cka^{2}\|\nabla\tau^a\|_{L^2}^{2},\nonumber
\end{align}
Along the same line to the proof of (\ref{3213}), we find
\begin{align}\label{413}
&\frac{1}{2}\frac{d}{dt}\|\Lambda^{s}(u^a,\tau^a)\|_{L^{2}}^{2}+\|\Lambda^{s+\beta}\tau^a\|_{L^{2}}^{2}+a\|\Lambda^{s}\tau^a\|_{L^{2}}^{2}\\\nonumber
=&-\langle[ \Lambda^{s},u^a\cdot\nabla]u^a,\Lambda^{s}u^a\rangle-\langle\Lambda^{s}(u^a\cdot\nabla\tau^a+Q(\nabla u^a,\tau^a)),\Lambda^{s}\tau^a\rangle\\ \nonumber
\lesssim & \|u^a\|_{H^{s}}\|\Lambda^{s-\beta+1} u^a\|_{L^{2}}^{2}+\|\tau^a\|_{L^{\infty}}\|\Lambda^{s+\beta}\tau^a\|_{L^{2}}\|\Lambda^{s-\beta+1}u^a\|_{L^{2}}\\ \nonumber &+\|u^a\|_{L^{\infty}}\|\Lambda^{s+\beta}\tau^a\|_{L^{2}}\|\Lambda^{s-\beta+1}\tau^a\|_{L^{2}}+\|u^a\|_{H^{s}}\|\Lambda^{\beta+1}\tau^a\|_{H^{s-1}}^{2}\\ \nonumber
\lesssim &\ \delta \bar{D}_{1},\nonumber
\end{align}
Thanks to $s\ge1+\beta$ again, by proceeding as in the estimates of (\ref{3214}), we deduce that
\begin{align}\label{414}
&\frac{d}{dt}\langle\Lambda^{s-\beta}\tau^a,-k\nabla \Lambda^{s-\beta}u^a\rangle+\frac{k}{2}\|\nabla\Lambda^{s-\beta}u^a\|_{L^{2}}^{2}\\ \nonumber =&\langle\Lambda^{s-\beta}\left(u^a\cdot\nabla\tau^a+Q(\nabla u^a,\tau^a)+(-\Delta)^{\beta} \tau^a+a\tau^a\right),k\nabla\Lambda^{s-\beta}u^a\rangle\\ \nonumber &-\langle\Lambda^{s-\beta}\mathbb{P}\left(u^a\cdot\nabla u^a-\text{div }\tau^a\right),k\text{div }\Lambda^{s-\beta}\tau^a\rangle\\ \nonumber
\leq&\ kC\|\Lambda^{s-\beta+1}u^a\|_{L^{2}}(\|u^a\|_{L^{\infty}}\|\Lambda^{s-\beta+1} \tau^a\|_{L^{2}}+\|\tau^a\|_{L^{\infty}}\|\Lambda^{s-\beta+1}u^a\|_{L^{2}}\\\nonumber
&+\|\Lambda^{\beta+1}\tau^a\|_{H^{s-1}}\|u^a\|_{H^{s}}+\|\Lambda^{s+\beta} \tau^a\|_{L^{2}})\\ \nonumber 
&+kC\|\Lambda^{s-\beta+1}\tau^a\|_{L^{2}}^{2}+Ca^{2}\|\nabla\tau^a\|_{H^{s-1}}^{2}+\frac{k}{100}\|\Lambda^{s-\beta+1}u^a\|_{L^{2}}^{2}\\ \nonumber
\leq\ &C\delta \bar{D}_{1}+C\|\Lambda^{\beta+1}\tau^a\|_{H^{s-1}}^{2}+kCa^{2}\|\nabla\tau^a\|_{H^{s-1}}^{2}+\frac{k}{100}\|\Lambda^{s-\beta+1}u^a\|_{L^{2}}^{2}.\nonumber
\end{align}
Combining (\ref{411})-(\ref{414}) and taking $k$ small enough, we obtain
\begin{align}\label{415}
  \frac{d}{dt}\bar{E}_{1}+\bar{D}_{1}\leq 0,
\end{align}
which implies that
\begin{align*}
\frac{d}{dt}[f(t)\bar{E}_{1}]+C_{2}f^{\prime}(t)\left(\frac{k}{2}\|\nabla u^a\|_{H^{s-1}}^{2}+\|\nabla\tau^a\|_{H^{s-1}}^{2}\right)&\lesssim f^{\prime}(t)\int_{S_{0}(t)}|\xi|^{2}(| \widehat{u^a}(\xi)|^{2}+|\widehat{\tau^a}(\xi)|^{2})d\xi\\\nonumber
&\lesssim (1+t)^{-2}\ln^{-l+1}(e+t).
\end{align*}
This ensures that 
\begin{align}\label{416}
\bar{E}_{1}\leq C(1+t)^{-1}\ln^{-l}(e+t).
\end{align}

\textbf{Step3: improve inital time decay rate}

Define $S(t)=\{\xi:|\xi|^{2}\leq C_{2}(1+t)^{-1}\}$ with $C_{2}$ large enough. By (\ref{402}), we infer that
\begin{align}\label{417}
\frac{d}{dt}\bar{E}_{0}(t)+\frac{kC_{2}}{2(1+t)}\|u^a\|_{H^{s}}^{2}+\frac{C_{2}}{1+t} \|\tau^a\|_{H^{s}}^{2}\lesssim \frac{1}{1+t}\int_{S(t)}|\widehat{u^a}(\xi)|^{2}+|\widehat{\tau^a} (\xi)|^{2}d\xi.
\end{align}
Integrating (\ref{404}) over $S(t)$ with $\xi$, then we derive
\begin{align}\label{L405}
\int_{S(t)}|\widehat{u^a}|^{2}+|\widehat{\tau^a}|^{2}d\xi\lesssim \int_{S(t)}|\widehat{u}_{0}|^{2}+|\widehat{\tau}_{0}|^{2}d\xi+\int_{S(t)}\int_{0}^{t}|\widehat{F}\cdot\overline{\widehat{u^a}}|+|\widehat{G}\cdot\overline{\widehat{\tau^a}}|dt^{\prime}d\xi.   
\end{align}
Similar to the derivation of (\ref{326}) and (\ref{327}), one can deduce that
\begin{align}\label{418}
\int_{S(t)}(|\hat{u}_{0}|^{2}+|\hat{\tau}_{0}|^{2})d\xi 
 & \lesssim (1+t)^{-1}\|(u_{0},\tau_{0})\|_{\dot{B}_{2,\infty}^{-1}}^{2},
\end{align}
and
\begin{align*}
\int_{S(t)}\int_{0}^{t}|\widehat{F}\cdot\overline{\widehat{u^a}}|+|\widehat{G}\cdot\overline{\widehat{\tau^a}}|dt^{\prime}d\xi
\lesssim &\ (1+t)^{-\frac{1}{2}}\int_{0}^{t}\|F\|_{L^1}\|u^a\|_{L^2}+\|G\|_{L^1}\|\tau^a\|_{L^2}dt^{\prime}\\\nonumber
\lesssim &\ (1+t)^{-\frac{1}{2}}\int_{0}^{t}\left(\|u^a\|_{L^2}^2+\|\tau^a\|_{L^2}^2\right)\left(\|\nabla u^a\|_{L^2}+\|\nabla\tau^a\|_{L^2}\right)dt^{\prime}.\nonumber
\end{align*}
Combining the above inequalities, (\ref{L405}) and (\ref{417}), we find
\begin{align*}
&\frac{d}{dt}\bar{E}_{0}(t)+\frac{kC_{2}}{2(1+t)}\|u^a\|_{H^ {s}}^{2}+\frac{C_{2}}{1+t}\|\tau^a\|_{H^{s}}^{2}\\ \nonumber &\lesssim \frac{1}{1+t}\left[(1+t)^{-1}+(1+t)^{-\frac{1}{2}}\int_{0}^{t}(\| u^a\|_{L^{2}}^{2}+\|\tau^a\|_{L^{2}}^{2})(\|\nabla u^a\|_{L^{2}}+\|\nabla\tau^a\|_{L^{2}}) dt^{\prime}\right],\nonumber
\end{align*}
which implies that
\begin{align*}
 (1+t)^{\frac{3}{2}}\bar{E}_{0}(t)\lesssim (1+t)^{\frac{1}{2}}+(1+t)\int_{0}^{t}\left(\|u^a\|_ {L^{2}}^{2}+\|\tau^a\|_{L^{2}}^{2}\right)\left(\|\nabla u^a\|_{L^{2}}+\|\nabla\tau^a\|_{L^{2}}\right) dt^{\prime}.
\end{align*}
Let $\displaystyle N(t)=\sup_{0\leq t^{\prime}\leq t}(1+t^{ \prime})^{\frac{1}{2}}\bar{E}_{0}(t^{\prime})$. Then we find
\begin{align*}
N(t)\leq C+C\int_{0}^{t}N(t^{\prime})(1+t^{\prime})^{-\frac{1}{2}}\left(\|\nabla u^a\|_{L^{2} }+\|\nabla\tau^a\|_{L^{2}}\right)dt^{\prime}.
\end{align*}
Applying Gronwall's inequality and taking $l=4$ in (\ref{410}), we get $N(t)<C$. We duduce that
\begin{align}\label{419}
\bar{E}_{0}\lesssim (1+t)^{-\frac{1}{2}}.
\end{align}
Due to (\ref{415}), we can derive that
\begin{align}\label{420}
\frac{d}{dt}\bar{E}_{1}(t)+\frac{kC_{2}}{2(1+t)}\|\nabla u^a\|_{H^{s-1}}^{2}+\frac{C_{2}}{1+t} \|\nabla\tau^a\|_{H^{s-1}}^{2}&\lesssim \frac{1}{1+t}\int_{S(t)}|\xi|^{2}(|\widehat{u}(\xi)|^{2}+|\widehat{\tau} (\xi)|^{2})d\xi.
\end{align}
Moreover, inserting  (\ref{419}) into (\ref{420}), we get
\begin{align}\label{421}
\bar{E}_{1}\lesssim (1+t)^{-\frac{3}{2}}.
\end{align}

\textbf{Step4: improve time decay rate agian}

Applying $\dot\Delta_{j}$ to (\ref{eq2}), we find
\begin{align*}
 \left\{\begin{array}{l}\dot{\Delta}_{{j}}u_{t}+ \nabla\dot{\Delta}_{j}P-\text{div}\ \dot{\Delta}_{j}\tau=\dot{\Delta}_{j}F,\\ [1ex] \dot{\Delta}_{j}\tau_{t}+a\dot{\Delta}_{j}\tau+(-\Delta)^{\beta}\dot{\Delta}_{j}\tau-\dot{\Delta}_{j}D(u)=\dot{\Delta}_{j}G.\end{array}\right.
\end{align*}
where $G=-Q(\nabla u^a,\tau^a)-u^a\cdot\nabla\tau^a$ and $F=-u^a\cdot\nabla u^a$. Then we get
\begin{align}\label{422}
\frac{d}{dt}(\|\dot{\Delta}_{j}u^a\|_{L^{2}}^{2}+\|\dot{\Delta}_{j}\tau^a\|_{L^{2} }^{2})+2\|\Lambda^{\beta}\dot{\Delta}_{j}\tau^a\|_{L^{2}}^{2}\lesssim \|\dot{\Delta}_{j}F\| _{L^{2}}\|\dot{\Delta}_{j}u^a\|_{L^{2}}+\|\dot{\Delta}_{j}G\|_{L^{2}}\|\dot{ \Delta}_{j}\tau^a\|_{L^{2}}.
\end{align}
Multiplying (\ref{422}) by $2^{-j}$ and taking $l^{\infty}$ norm, we have
\begin{align}\label{423}
\frac{d}{dt}\left(\|u^a\|_{\dot{B}^{-\frac{1}{2}}_{2,\infty}}^{2}+\|\tau^a\|_{\dot{B}^{-\frac{1}{2}}_{2,\infty}}^{2}\right)\lesssim \|F\|_{\dot{B}^{-\frac{1}{2}}_{2,\infty}}\|u^a\|_{ \dot{B}^{-\frac{1}{2}}_{2,\infty}}+\|G\|_{\dot{B}^{-\frac{1}{2}}_{2,\infty}}\|\tau^a\|_{ \dot{B}^{-\frac{1}{2}}_{2,\infty}}.  
\end{align}
Let $\displaystyle M(t)=\sup_{0\le t^{\prime}\le t}\left(\|u^a\|_{\dot{B}^{-\frac{1}{2}}_{2,\infty}}+\|\tau^a\|_{\dot{B}^{-\frac{1}{2}}_{2,\infty}}\right)$, then we obtain
\begin{align}\label{424}
M^{2}(t)\leq CM^{2}(0)+CM(t)\int_{0}^{t}\left(\|F\|_{\dot{B}^{-\frac{1}{2}}_{2,\infty}}+\|G\|_{\dot{B}^{-\frac{1}{2}}_{2,\infty}}\right)dt^{\prime}. 
\end{align}
Using the fact $L^{\frac{4}{3}}\hookrightarrow\dot{B}^{-\frac{1}{2}}_{2,\infty}$, we deduce for any $t> 0$, that
\begin{align*}
\int_{0}^{t}(\|F\|_{\dot{B}^{-\frac{1}{2}}_{2,\infty}}+\|G\|_{\dot{B}^{-\frac{1}{2}}_{2,\infty}})dt^{\prime}&\lesssim  \int_{0}^{t}(\|F\|_{L^{\frac{4}{3}}}+\|G\|_{L^{\frac{4}{3}}})dt^{\prime}\\\nonumber
&\lesssim  \int_{0}^{t}(\|u^a\|_{L^4}+\|\tau\|_{L^4})(\|\nabla u^a\|_{L^2}+\|\nabla\tau^a\|_{L^2})dt^{\prime}\\\nonumber
&\lesssim  \int_{0}^{t}(1+t^{\prime})^{-\frac{5}{4}}dt^{\prime}<\infty.\nonumber
\end{align*}
Hence, applying gronwall's inequality, we deduce $M(t)<C$. Due to (\ref{419}) and (\ref{421}), we have
\begin{align*}
\int_{S_{1}(t)}\int_{0}^{t}|\widehat{F}\cdot\overline{\widehat{u^a}}|+|\widehat{G}\cdot\overline{\widehat{\tau^a}}|dt^{\prime}d\xi
\lesssim & \int_{0}^{t}\left(\|F\|_{L^\frac{4}{3}}\int_{S_{1}(t)}|{\widehat{u^a}}|d\xi+\|G\|_{L^\frac{4}{3}}\int_{S_{1}(t)}|{\widehat{\tau^a}}|d\xi\right
) dt^{\prime} \\
\lesssim &\ (1+t)^{-\frac{3}{4}}M(t)\ln(1+t)\\
\lesssim &\ (1+t)^{-\frac{5}{8}}.
\end{align*}
This together with (\ref{417}), (\ref{418}) and (\ref{420}) ensures that 
\begin{align}\label{427}
\bar{E}_{0}(t) \lesssim (1+t)^{-\frac{5}{8}},~~~\bar{E}_{1}(t) \lesssim (1+t)^{-\frac{13}{8}}.
\end{align} 
Multiplying (\ref{422}) by $2^{-2j}$ and taking $l^{\infty}$ norm, we achieve
\begin{align*}
\frac{d}{dt}\left(\|u^a\|_{\dot{B}^{-1}_{2,\infty}}^{2}+\|\tau^a\|_{\dot{B}^{-1}_{2,\infty}}^{2}\right)\lesssim \|F\|_{\dot{B}^{-1}_{2,\infty}}\|u^a\|_{ \dot{B}^{-1}_{2,\infty}}+\|G\|_{\dot{B}^{-1}_{2,\infty}}\|\tau^a\|_{ \dot{B}^{-1}_{2,\infty}}.  
\end{align*}
Let $\displaystyle M_1(t)=\sup_{0\le t^{\prime}\le t}\left(\|u^a\|_{\dot{B}^{-1}_{2,\infty}}+\|\tau^a\|_{\dot{B}^{-1}_{2,\infty}}\right)$, then we yield
\begin{align*}
M_1^{2}(t)\leq CM_1^{2}(0)+CM_1(t)\int_{0}^{t}\left(\|F\|_{\dot{B}^{-1}_{2,\infty}}+\|G\|_{\dot{B}^{-1}_{2,\infty}}\right)dt^{\prime}. 
\end{align*}
For any $t>0$, using (\ref{427}) and the fact $L^{1}\hookrightarrow \dot{B}^{-1}_{2,\infty}$, we arrive at
\begin{align*}
\int_{0}^{t}(\|F\|_{\dot{B}^{-1}_{2,\infty}}+\|G\|_{\dot{B}^{-1}_{2,\infty}})dt^{\prime}&\lesssim  \int_{0}^{t}(\|F\|_{L^{1}}+\|G\|_{L^{1}})dt^{\prime}\\\nonumber
&\lesssim  \int_{0}^{t}(\|u^a\|_{L^2}+\|\tau\|_{L^2})(\|\nabla u^a\|_{L^2}+\|\nabla\tau^a\|_{L^2})dt^{\prime}\\\nonumber
&\lesssim  \int_{0}^{t}(1+t^{\prime})^{-\frac{9}{8}}dt^{\prime}<\infty.\nonumber
\end{align*}
By Gronwall's inequality again, we know $M_1(t)<C$. Using (\ref{427}), we have
\begin{align}\label{428}
\int_{S_{1}(t)}\int_{0}^{t}|\widehat{F}\cdot\bar{\widehat{u}}|+|\widehat{G}\cdot\bar{\widehat{\tau}}|dt^{\prime}d\xi
\lesssim &\ (1+t)^{-1}M(t)\int_{0}^{t}(1+t)^{-\frac{9}{8}}dt^{\prime}\\\nonumber
\lesssim &\ (1+t)^{-1},\nonumber
\end{align}
This together with (\ref{417}), (\ref{L405}) and (\ref{418}) ensures that
\begin{align*}
\frac{d}{dt}\bar{E}_{0}(t)+\frac{kC_{2}}{2(1+t)}\|u^a\|_{H^{s} }^{2}+\frac{C_{2}}{1+t}\|\tau^a\|_{H^{s}}^{2} \lesssim (1+t)^{-2}.
\end{align*} 
Similarly, we deduce that
\begin{align}\label{425}
\bar{E}_{0}(t) \lesssim (1+t)^{-1},~~~\bar{E}_{1}(t) \lesssim (1+t)^{-2}.
\end{align} 
Using interpolation between Sobolev spaces, we finish the proof of Proposition \ref{4prop1}.
\end{proof}

Next we consider the integrability of $\|\nabla u^a\|_{L^\infty}$ and $\|\nabla \tau^a\|_{L^\infty}$ which is useful to establish the rate of uniform vanishing damping limit for (\ref{eq2}).
\begin{prop}\label{4prop4}
  Under the condition of Proposition \ref{4prop1}, if $s>1+\beta$, then we have
  \begin{align*}
  \int_{0}^{+\infty}\|\nabla (u^a,\tau^a)\|_{L^\infty}dt < C.
  \end{align*}
\end{prop}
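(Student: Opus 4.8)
The plan is to control $\|\nabla(u^a,\tau^a)\|_{L^\infty}$ by interpolation and then integrate in time, the essential idea being to trade the (merely) $(1+t)^{-1}$ pointwise decay of the top-order norms against the \emph{time-integrated} smoothing encoded in the dissipation $\bar D_1$. First, since $d=2$, for any fixed $s_2>2$ the Gagliardo--Nirenberg inequality (Lemma \ref{lemma3}) gives, for a.e.\ $t>0$,
\begin{align*}
\|\nabla(u^a,\tau^a)\|_{L^\infty}\lesssim \|\nabla(u^a,\tau^a)\|_{L^2}^{1-\theta}\,\|\Lambda^{s_2}(u^a,\tau^a)\|_{L^2}^{\theta},\qquad \theta=\frac{1}{s_2-1}\in(0,1).
\end{align*}
I would then fix $s_2\in(2,\,s-\beta+1]$, a nonempty interval precisely because $s>1+\beta$. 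Recalling $\bar D_1=\frac k2\|\nabla\Lambda u^a\|_{H^{s-\beta-1}}^2+\|\Lambda^{1+\beta}\tau^a\|_{H^{s-1}}^2$, the choice $s_2\le s-\beta+1\le s+\beta$ (the last inequality uses $\beta\ge\frac12$) ensures $\|\Lambda^{s_2}(u^a,\tau^a)\|_{L^2}^2\lesssim \bar D_1$: the $u^a$-part is controlled by $\|\nabla\Lambda u^a\|_{H^{s-\beta-1}}=\|\Lambda^2u^a\|_{H^{s-\beta-1}}$ and the $\tau^a$-part by $\|\Lambda^{1+\beta}\tau^a\|_{H^{s-1}}$. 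Note that $\|\Lambda^{s_2}(u^a,\tau^a)\|_{L^2}$ is finite for a.e.\ $t$ since $\bar D_1\in L^1_{\mathrm{loc}}$, so the above interpolation is legitimate even though $s_2$ may exceed the pointwise regularity~$s$.

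The second ingredient is a weighted-in-time bound on $\bar D_1$. From $\frac{d}{dt}\bar E_1+\bar D_1\le0$ (see \eqref{415}) together with the decay $\bar E_1\lesssim(1+t)^{-2}$ obtained in \eqref{425}, an integration by parts yields
\begin{align*}
\int_0^\infty (1+t)^{m}\bar D_1\,dt\lesssim \bar E_1(0)+m\int_0^\infty(1+t)^{m-1}\bar E_1\,dt<\infty\qquad\text{for every } m<2,
\end{align*}
where I use $\bar E_1\ge0$, valid for $k$ small since then $\bar E_1\simeq\|\nabla(u^a,\tau^a)\|_{H^{s-1}}^2$ and the boundary term vanishes. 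Combining this with the pointwise rate $\|\nabla(u^a,\tau^a)\|_{L^2}\lesssim(1+t)^{-1}$ from Proposition \ref{4prop1} (the case $s_1=1$), I would insert a weight $(1+t)^{\pm m\theta/2}$ and apply H\"older in time with conjugate exponents $\big(\tfrac{2}{2-\theta},\tfrac{2}{\theta}\big)$ to obtain
\begin{align*}
\int_0^\infty\|\nabla(u^a,\tau^a)\|_{L^\infty}\,dt\lesssim\Big(\int_0^\infty(1+t)^{-\frac{m\theta+2(1-\theta)}{2-\theta}}\,dt\Big)^{\frac{2-\theta}{2}}\Big(\int_0^\infty(1+t)^{m}\bar D_1\,dt\Big)^{\frac{\theta}{2}}.
\end{align*}
The second factor is finite for any $m<2$, while the first integral converges as soon as $\frac{m\theta+2(1-\theta)}{2-\theta}>1$, i.e.\ $m>1$; choosing any $m\in(1,2)$ (say $m=\tfrac32$) closes the estimate.

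The main obstacle is exactly the borderline nature of the top-order decay: because the damping forces the uniform-in-$a$ rate to saturate at $(1+t)^{-1}$ for all derivatives of order $\ge 1$, a direct interpolation produces precisely the non-integrable bound $\|\nabla(u^a,\tau^a)\|_{L^\infty}\lesssim(1+t)^{-1}$, and no choice of interpolation exponents built from pointwise rates alone can do better. The resolution, and the point where the hypothesis $s>1+\beta$ is indispensable, is that the dissipation $\bar D_1$ controls derivatives of order strictly above $2$—beyond the pointwise regularity $H^s$ itself—and that this control is integrable against the weight $(1+t)^{m}$, $m<2$, thanks to $\bar E_1\lesssim(1+t)^{-2}$. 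Pairing this integrated smoothing with the decay of $\|\nabla(u^a,\tau^a)\|_{L^2}$ through H\"older is what upgrades the borderline rate into genuine time-integrability; the remaining points (nonnegativity and coercivity of $\bar E_1$ for small $k$, and the smoothness needed to justify the interpolation) are routine.
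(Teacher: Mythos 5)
Your proof is correct and follows essentially the same route as the paper: both rest on the Gagliardo--Nirenberg interpolation made possible by $s>1+\beta$ (so that the dissipation $\bar{D}_1$ controls derivatives of order $s-\beta+1>2$, strictly above what pointwise decay alone can reach) combined with the weighted dissipation bound $\int_0^\infty (1+t)^{m}\bar{D}_1\,dt<\infty$ for $m<2$, which you derive exactly as the paper does in (\ref{442}) from (\ref{415}) and $\bar{E}_1\lesssim (1+t)^{-2}$. The only cosmetic differences are that you interpolate against $\|\nabla(u^a,\tau^a)\|_{L^2}\lesssim(1+t)^{-1}$ and close with H\"older in time, whereas the paper interpolates against $\|u^a\|_{L^2}\lesssim(1+t)^{-1/2}$ and closes with Young's inequality pointwise in $t$ before integrating --- the same trade-off in an equivalent form.
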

\begin{proof}
We only prove 
\begin{align*}
  \int_{0}^{+\infty}\|\nabla u^a\|_{L^\infty}dt < C.
  \end{align*}
  The time integrability of $\|\nabla \tau^a\|_{L^\infty}$ can be obtained by similar argument.
The case with $s>2$ can be easily derived from Proposition \ref{4prop1} and Sobolev embedding. Next, we focus on the case where $1+\beta<s\le 2$. Applying Lemma \ref{lemma3} leads to  
\begin{align}\label{441}
\|\nabla u^a\|_{L^{\infty}}\lesssim   \|u^a\|_{L^2}^{\frac{s-\beta-1}{s-\beta+1}}\|\Lambda^{s-\beta+1}u^a\|_{L^2}^{\frac{2}{s-\beta+1}}.
\end{align}
According to (\ref{415}) and Proposition \ref{4prop1}, for $0<\delta<2$, we get
\begin{align*}
  (1+t)^{\delta}\bar{E}_1+\int_{0}^{t}(1+t^{\prime})^{\delta}\bar{D}_{1}dt^{\prime}\le C+\delta\int_{0}^{t}(1+t^{\prime})^{\delta-1}\bar{E}_{1}dt^{\prime}\le C,
\end{align*}
which in particular implies that 
\begin{align}\label{442}
 \int_{0}^{t}(1+t)^{\delta}\|\Lambda^{s-\beta+1}u^a\|_{L^2}^{2}dt^{\prime}\le C.
\end{align}
Using (\ref{441}) and (\ref{442}) and taking $b(s-\beta+1)=\delta$, we get
  \begin{align}\label{443}
  \int_{0}^{+\infty}\|\nabla u^a\|_{L^\infty}dt & \lesssim  \int_{0}^{+\infty}\|u^a\|_{L^2}^{\frac{s-\beta-1}{s-\beta+1}}\|\Lambda^{s-\beta+1}u^a\|_{L^2}^{\frac{2}{s-\beta+1}}dt\\ \nonumber
  & \lesssim  \int_{0}^{+\infty}(1+t)^{-\frac{b(s-\beta+1)}{s-\beta}}\|u^a\|_{L^2}^{\frac{s-\beta-1}{s-\beta}}dt+ \int_{0}^{+\infty}(1+t)^{b(s-\beta+1)}\|\Lambda^{s-\beta+1}u^a\|_{L^2}^{2}dt.\\ \nonumber
&\lesssim \int_{0}^{+\infty}(1+t)^{-\frac{s-\beta-1+2\delta}{2s-2\beta}}dt+C\lesssim C. \nonumber
  \end{align}
\end{proof}
 \textbf{Proof of Theorem \ref{1theo3}}
  \begin{proof}
Combining Propositions \ref{4prop1}-\ref{4prop4}, we achieve the proof of Theorem \ref{1theo3}.
  \end{proof}
\subsection{Uniform vanishing damping limit}

By virtue of the time decay rate from Proposition \ref{4prop1} and key integrability from Proposition \ref{4prop4}, we finally
obtain the uniform vanishing damping limit.
\begin{prop}\label{4prop5}
  Under the conditions in Proposition \ref{4prop4}, if $ \frac{1}{2}<\beta<1$, for any $\alpha\in [0,1]$, there holds
  \begin{align*}
  \left\|\nabla^{\alpha}\left(u^{a}-u^{0},\tau^{a}-\tau^{0}\right)\right\|_{L^{\infty}([0,\infty),L^{2})}\leq Ca ^{\frac{\beta(1+\alpha)}{\alpha\beta+3\beta-1}}.
  \end{align*} 
\end{prop}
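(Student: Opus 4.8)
The plan is to study the difference $(v,\sigma):=(u^{a}-u^{0},\tau^{a}-\tau^{0})$, which (subtracting the two copies of \eqref{eq2}) is divergence free in $v$ and satisfies $\partial_t v+\nabla\Pi=\operatorname{div}\sigma+F_v$ together with $\partial_t\sigma+(-\Delta)^{\beta}\sigma-D(v)=G_\sigma-a\tau^{a}$, where $F_v=-(v\cdot\nabla u^{a}+u^{0}\cdot\nabla v)$ and $G_\sigma$ collects the analogous bilinear differences of $u\cdot\nabla\tau$ and $Q(\nabla u,\tau)$. The point is that the only genuinely inhomogeneous forcing is the damping term $-a\tau^{a}$, which carries the explicit factor $a$; everything in $F_v,G_\sigma$ is bilinear in the differences and the (known) solutions and will be treated perturbatively.

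First I would establish an $a$-weighted bound. Reproducing the $H^{1}$ functional $\bar E_{1},\bar D_{1}$ of Proposition \ref{4prop1} for $(v,\sigma)$ — in particular adding the cross term $\langle\nabla v,\sigma\rangle_{H^{s-\beta}}$ to manufacture dissipation for $v$ — yields $\frac{d}{dt}\mathcal E_{1}+\mathcal D_{1}\lesssim g(t)\,\mathcal E_{1}+a\,(\text{source})$, where $g(t)$ is built from $\|\nabla(u^{a},\tau^{a})\|_{L^{\infty}}$ and the $L^\infty$ norms of the solutions. Since $\int_{0}^{\infty}\|\nabla(u^{a},\tau^{a})\|_{L^{\infty}}\,dt<\infty$ by Proposition \ref{4prop4} (and likewise for $u^{0}$), Gronwall's inequality absorbs all nonlinear contributions with a constant uniform in $a$ and $t$, leaving only the damping source to drive the estimate. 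Because $-a\tau^{a}$ is undamped at low frequency, a crude energy estimate gives only $\|(v,\sigma)\|_{L^{2}}\lesssim a(1+t)^{1/2}$; to get the sharp rate I would run the Fourier-splitting argument of Section 4 on the difference system, separating $\{|\xi|^{2\beta}\lesssim(1+t)^{-1}\}$ (where $\sigma$ accumulates the source) from the diffusively damped high frequencies. This produces $\|(v,\sigma)\|_{H^{1}}\lesssim a(1+t)^{1-\frac{1}{2\beta}}$, with the $L^{2}$ part dominating while the $\dot H^{1}$ part decays like $a(1+t)^{-\frac{1}{2\beta}}$.

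Second, I would record the $a$-free decay of the difference: by the triangle inequality with the uniform-in-$a$ rate $\|\Lambda^{\alpha}(u^{a},\tau^{a})\|_{L^{2}}\lesssim(1+t)^{-\frac{1+\alpha}{2}}$ of Proposition \ref{4prop1} and the faster rate $\|\Lambda^{\alpha}(u^{0},\tau^{0})\|_{L^{2}}\lesssim(1+t)^{-\frac{1+\alpha}{2\beta}}$ of Theorem \ref{1theo2}, one obtains $\|(v,\sigma)\|_{\dot H^{\alpha}}\lesssim(1+t)^{-\frac{1+\alpha}{2}}$ for every $\alpha\in[0,1]$, the slower $a>0$ rate dominating.

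Finally, I would interpolate in time. Using $\|(v,\sigma)\|_{\dot H^{\alpha}}\le\|(v,\sigma)\|_{H^{1}}$ for $\alpha\in[0,1]$, the two bounds combine to give
\[
\|(v,\sigma)(t)\|_{\dot H^{\alpha}}\le C\min\Big\{a(1+t)^{1-\frac{1}{2\beta}},\ (1+t)^{-\frac{1+\alpha}{2}}\Big\}.
\]
The right-hand side increases while the first term is active and decreases once the second takes over, so its supremum over $t\ge0$ is attained at the crossover time $T$ solving $a(1+T)^{1-\frac{1}{2\beta}}=(1+T)^{-\frac{1+\alpha}{2}}$, i.e. $1+T\sim a^{-1/(p+q)}$ with $p=1-\frac{1}{2\beta}$, $q=\frac{1+\alpha}{2}$. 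Evaluating there gives $\sup_{t}\|(v,\sigma)\|_{\dot H^{\alpha}}\lesssim a^{q/(p+q)}=a^{\frac{\beta(1+\alpha)}{\alpha\beta+3\beta-1}}$, which is the claim. The main obstacle is the first step: a naive energy estimate loses a full power, producing the lossy $a(1+t)^{1/2}$ and hence a $\beta$-independent, non-sharp exponent; the sharp growth $a(1+t)^{1-\frac{1}{2\beta}}$ — and therefore the sharp final exponent — rests entirely on the Fourier-splitting treatment of the low-frequency accumulation of the damping source, closed uniformly in $a$ by means of the global time-integrability from Proposition \ref{4prop4}.
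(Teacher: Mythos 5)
Your overall skeleton matches the paper's proof: write the difference system, run an $H^{1}$-level energy estimate with a cross term to generate dissipation for $u^{a}-u^{0}$, use the time-integrability from Proposition \ref{4prop4} to absorb the bilinear terms, establish the intermediate growth bound $\phi(t)\lesssim a(1+t)^{1-\frac{1}{2\beta}}$, and then optimize against the uniform decay $(1+t)^{-\frac{1+\alpha}{2}}$ of Proposition \ref{4prop1} at the crossover time $a^{\frac{2\beta}{\alpha\beta+3\beta-1}}t\sim 1$; your final computation of the exponent is correct. The gap is in the mechanism you propose for the intermediate bound, which is the heart of the proposition. You correctly note that the crude estimate of the forcing, $a\|\tau^{a}\|_{H^{1}}\phi\lesssim a(1+t)^{-1/2}\phi$, only yields $\phi\lesssim a(1+t)^{1/2}$, but Fourier splitting of the difference system cannot repair this. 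Fourier splitting compensates for the lack of dissipation at low frequencies of the \emph{unknown} $(v,\sigma)$; it does nothing about the size of the \emph{forcing} in the energy inequality. Once the inequality reads $\frac{d}{dt}\phi^{2}+\frac{C}{1+t}\phi^{2}\lesssim(\text{low-frequency terms})+\varphi(t)\phi^{2}+a(1+t)^{-1/2}\phi$, the ODE balance dictated by the last term forces $\phi\sim a(1+t)^{1/2}$ no matter how well the low-frequency integral is controlled. Nor can you confine the forcing to the Fourier-split region: the high-frequency part of $\tau^{a}$ is only $O((1+t)^{-1/2})$ uniformly in $a$, and pairing it against the dissipation $\|\Lambda^{\beta}\sigma\|_{H^{1}}^{2}$ via Young's inequality leaves a remainder $\sim a^{2}$, which again integrates to $\phi\lesssim a(1+t)^{1/2}$. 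With only this bound, your optimization would stall at the non-sharp, $\beta$-independent exponent $a^{\frac{1+\alpha}{2+\alpha}}$.

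What the paper actually does at this step is purely algebraic and uses an input your proposal never invokes here: write $-a\tau^{a}=-a(\tau^{a}-\tau^{0})-a\tau^{0}$. The first piece has a favorable sign and moves to the left-hand side (this is the term $a\|\tau^{a}-\tau^{0}\|_{L^{2}}^{2}$ in \eqref{454}); the surviving source involves only the \emph{undamped} solution, whose optimal decay from Theorem \ref{1theo2} is strictly faster than the uniform rate, namely $\|\tau^{0}\|_{L^{2}}\lesssim(1+t)^{-\frac{1}{2\beta}}$ and $\|\nabla\tau^{0}\|_{L^{2}}\lesssim(1+t)^{-\frac{1}{\beta}}$. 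This converts the forcing into $a(1+t)^{-\frac{1}{2\beta}}\phi(t)$, after which plain Gronwall (no Fourier splitting of the difference system at all) gives \eqref{4518}, i.e.\ $\phi(t)\lesssim a(1+t)^{1-\frac{1}{2\beta}}$. So the sharp exponent rests on the sign of $-a(\tau^{a}-\tau^{0})$ combined with the faster $a=0$ decay rates of Section 4 — you cite Theorem \ref{1theo2} only in your triangle-inequality step, where it is in fact not needed, and omit it at the one point where it is indispensable.
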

\begin{proof}
Let $(u^a,\tau^a)$ be a solution of (\ref{eq2}) with the initial data $(u_0, \tau_0)$ and $a\in [0, 1]$. Then we get
\begin{align}\label{451}
\left\{\begin{array}{l}\partial_{t}(u^{a}-u^{0})+u^{a}\cdot\nabla(u^{a}-u^{0})+ \nabla(P_{u^{a}}-P_{u^{0}}) \\[1ex] = \mathrm{div} (\tau^{a}-\tau^{0})-(u^{a}-u^{0})\cdot\nabla u^{0},\ \ \ \ \mathrm{div} (u^{a}-u^{0})=0,\\ [1ex]
\partial_{t}(\tau^{a}-\tau^{0})+u^{a}\cdot\nabla(\tau^{a}-\tau^{0})+(-\Delta)^{\beta}( \tau^{a}-\tau^{0})+Q(\nabla u^{a},\tau^{a}-\tau^{0})\\ [1ex] =-a\tau^{a}+D(u^{a}-u^{0})-(u^{a}-u^{0})\cdot\nabla\tau^{0}-Q(\nabla(u^{a}-u^{0 }),\tau^{0}),\\[1ex] (u^{a}-u^{0})|_{t=0}=0,\ (\tau^{a}-\tau^{0})|_{t=0}=0.\end{array}\right.
\end{align}
We denote
\begin{align*}
\phi (t)=\left\|(u^{a}-u^{0},\tau^{a}-\tau^{0})\right\|_{L^{2}}+\|\nabla\left(u^{a}-u^{0},\tau^{a}-\tau^{0}\right)\|_{L^{2}},
\end{align*}
and
\begin{align*}
\varphi(t)=&\|\nabla u^0\|_{L^{\infty}}+\|\nabla u^{a}\|_{L^{\infty}}+\|\nabla\tau^0\|_{L^{\infty}}+\|\tau^{0}\|_{L^\infty}^{2}+\|\nabla u^a\|_{L^\infty}^{2}+\|\nabla u^{0}\|_{L^\infty}^{2}+\|\nabla\tau^{0}\|_{L^\infty}^{2}\\\nonumber
&+\|\nabla^{2-\beta}u^{a}\|_{L^2}^{2}+\|\nabla^{2-\beta}\tau^{0}\|_{L^2}^{2}+\|\nabla^{2-\beta}u^{0}\|_{L^2}^{2}+\|\nabla^{1+\beta} u^0\|_{L^2}^{2}.
\end{align*}
According to (\ref{451}), we deduce that
\begin{align}\label{454}
&\frac{1}{2}\frac{d}{dt}\left\|(u^{a}-u^{0},\tau^{a}- \tau^{0})\right\|_{L^{2}}^{2}+\|\Lambda^{\beta}(\tau^{a}-\tau^{0})\|_{L^{2}}^{2}+a\|\tau^{a}-\tau^{0}\|_{L^{2}}^{2} \\ \nonumber
\leq &\|\nabla u^0\|_{L^{\infty}}\phi^{2}(t)+a(1+t)^{-\frac{1}{2\beta}}\phi(t)+\|\nabla u^{a}\|_{L^{\infty}}\phi^{2}(t)\\\nonumber
&+\|\tau^0\|_{L^{\infty}}^{2}\phi^{2}(t)+\frac{1}{100}\|\nabla(u^{a}-u^{0})\|_{L^{2}}^{2}+\|\nabla\tau^0\|_{L^{\infty}}\phi^{2}(t)\\\nonumber
\leq &\ C\varphi(t) \phi^{2}(t)+C(1+t)^{-\frac{1}{2\beta}}\phi(t)+\frac{k}{100}\|\nabla(u^{a}-u^{0})\|_{L^{2}}^{2}.\nonumber
\end{align}
Using (\ref{451}) again,  we find
\begin{align*}
&\frac{1}{2}\frac{d}{dt}\left\|\nabla(u^{a}-u^{0},\tau^{a}- \tau^{0})\right\|_{L^{2}}^{2}+\|\nabla\Lambda^{\beta}(\tau^{a}-\tau^{0})\|_{L^{2}}^{2}+a\|\nabla(\tau^{a}-\tau^{0})\|_{L^{2}}^{2}\\ 
=& -\langle\nabla(u^{a}-u^{0})\cdot\nabla u^{0},\nabla(u^{a}-u^{0})\rangle-\langle a\nabla\tau^{0},\nabla (\tau^{a}-\tau^ {0})\rangle-\langle \nabla Q(\nabla u^{a},\tau^{a}-\tau^{0}),\nabla(\tau^{a}-\tau^ {0})\rangle\\
&-\langle \nabla Q(\nabla(u^{a}-u^{0}),\tau^{0}),\nabla(\tau^{a}-\tau^ {0})\rangle-\langle\nabla((u^{a}-u^{0})\cdot\nabla\tau^{0}),\nabla(\tau^{a}-\tau^ {0})\rangle\\
&-\langle(\nabla(u^{a}\cdot\nabla(\tau^{a}-\tau^{0})),\nabla(\tau^{a}-\tau^ {0})\rangle\\
\triangleq&\sum_{i=1}^{6}K_{i}
\end{align*}
By Lemma \ref{lemma5} and using the fact $s>1+\beta$, we find
\begin{align*}
K_1
\lesssim\ &\|\nabla^{2-\beta}(u^{a}-u^{0})\|_{L^2}\|\nabla^{\beta}(u^{a}-u^{0})\|_{L^2}\|\nabla u^0\|_{L^\infty}\\ \nonumber
&+\|\nabla^{2-\beta}(u^{a}-u^{0})\|_{L^2}\|u^{a}-u^{0}\|_{L^\infty}\|\nabla^{1+\beta} u^0\|_{L^2}\\ \nonumber
\le\ & \frac{k}{100}\|\nabla^{2-\beta}(u^{a}-u^{0})\|_{L^2}^{2}+C\|\nabla^{\beta}(u^{a}-u^{0})\|_{L^2}^{2}\|\nabla u^0\|_{L^\infty}^{2}\\\nonumber
&+C\delta\|\nabla^{2-\beta}(u^{a}-u^{0})\|_{L^{2}}^{2}+C\|\nabla^{1+\beta} u^0\|_{L^2}^{2}\|u^{a}-u^{0}\|_{L^2}^{2}\\\nonumber
\le\ & C\varphi(t)\phi^{2}(t)+\left(C\delta+\frac{k}{100}\right)\|\Lambda^{2-\beta}(u^{a}-u^{0})\|_{L^{2}}^{2},
\end{align*}
where $k$ to be determined below. By Proposition \ref{4prop1}, we get
\begin{align*}
K_2\le a\|\nabla\tau^0\|_{L^{2}}\|\nabla(\tau^{a}-\tau^{0})\|_{L^{2}}  \le aC(1+t)^{-\frac{1}{\beta}}\phi(t).
\end{align*}
By Lemma \ref{lemma5}, we obtain
\begin{align*}
K_3
\lesssim\ & \|\Lambda^{1+\beta}(\tau^{a}-\tau^{0})\|_{L^{2}}\left(\|\Lambda^{2-\beta}(u^{a}-u^{0})\|_{L^{2}}\|\tau^0\|_{L^{\infty}}+\|\Lambda^{1-\beta}\tau^0\|_{L^{\frac{2}{1-\beta}}}\|\nabla(u^a-u^0)\|_{L^{\frac{2}{\beta}}}\right)\\ \nonumber
\lesssim\ & \|\Lambda^{1+\beta}(\tau^{a}-\tau^{0})\|_{L^{2}}\left(\|\Lambda^{2-\beta}(u^{a}-u^{0})\|_{L^{2}}\|\tau^0\|_{L^{\infty}}+\|\Lambda\tau^0\|_{L^{2}}\|\Lambda^{2-\beta}(u^a-u^0)\|_{L^{2}}\right)\\ \nonumber
\lesssim\ &\delta\left(\|\Lambda^{1+\beta}(\tau^{a}-\tau^{0})\|_{L^{2}}^{2}+\|\Lambda^{2-\beta}(u^{a}-u^{0})\|_{L^{2}}^{2}\right).\nonumber
\end{align*}
By a similar derivation of $K_1$, we get
\begin{align*}
K_4
\lesssim\ &\|\nabla^{1+\beta}(\tau^{a}-\tau^{0})\|_{L^2}\|\nabla^{1-\beta}(\tau^{a}-\tau^{0})\|_{L^2}\|\nabla u^{a}\|_{L^\infty}\\ \nonumber
&+\|\tau^{a}-\tau^{0}\|_{L^\infty}\|\nabla^{1+\beta}(\tau^{a}-\tau^{0})\|_{L^2}\|\nabla^{2-\beta}u^{a}\|_{L^2}\\ \nonumber
\le\ &\frac{1}{100}\|\nabla^{1+\beta}(\tau^{a}-\tau^{0})\|_{L^2}^{2}+C\|\nabla^{1-\beta}(\tau^{a}-\tau^{0})\|_{L^2}^{2}\|\nabla u^{a}\|_{L^\infty}^{2}\\\nonumber
&+C\|\nabla^{1+\beta}(\tau^{a}-\tau^{0})\|_{L^2}^{2}\|\nabla^{2-\beta}u^{a}\|_{L^2}^{2}+C\|\nabla^{2-\beta}u^{a}\|_{L^2}^{2}\|\tau^{a}-\tau^{0}\|_{L^2}^{2}\\ \nonumber
\le\ & C\varphi(t)\phi^{2}(t)+\left(C\delta+\frac{1}{100}\right)\|\nabla^{1+\beta}(\tau^{a}-\tau^{0})\|_{L^2}^{2},
\end{align*}
and
\begin{align*}
K_5
\lesssim\ &\|\nabla^{1+\beta}(\tau^{a}-\tau^{0})\|_{L^2}\|\nabla^{1-\beta}(u^{a}-u^{0})\|_{L^2}\|\nabla \tau^{0}\|_{L^\infty}\\ \nonumber &+\|u^{a}-u^{0}\|_{L^\infty}\|\nabla^{1+\beta}(\tau^{a}-\tau^{0})\|_{L^2}\|\nabla^{2-\beta}\tau^{0}\|_{L^2}\\ \nonumber 
\le\ &\frac{1}{100}\|\nabla^{1+\beta}(\tau^{a}-\tau^{0})\|_{L^2}^{2}+C\|\nabla^{1-\beta}(u^{a}-u^{0})\|_{L^2}^{2}\|\nabla \tau^{0}\|_{L^\infty}^{2}+C\|u^{a}-u^{0}\|_{L^2}^{2}\|\nabla^{2-\beta}\tau^{0}\|_{L^2}^{2}\\ \nonumber
&+C\|\nabla^{2-\beta}\tau^{0}\|_{L^2}^{2}\|\Lambda^{2-\beta}(u^{a}-u^{0})\|_{L^2}^{2}\\\nonumber
\le\ & C\varphi(t)\phi^{2}(t)+\frac{1}{100}\|\nabla^{1+\beta}(\tau^{a}-\tau^{0})\|_{L^2}^{2}+C\delta\left(\|\nabla^{1+\beta}(\tau^{a}-\tau^{0})\|_{L^2}^2+\|\Lambda^{2-\beta}(u^{a}-u^{0})\|_{L^2}^2\right).
  \nonumber
\end{align*}
Due to the fact $\mathrm{div}~u^a=0$, we have
\begin{align*}
K_6\lesssim \|\nabla u^a\|_{L^\infty}\|\nabla(\tau^{a}-\tau^{0})\|_{L^2}^{2}\le \|\nabla u^a\|_{L^\infty}\phi^2(t).
\end{align*}
Collecting the estimates of $K_1$ through $K_6$ leads to 
\begin{align}\label{4512}
&\frac{1}{2}\frac{d}{dt}\left\|(\nabla(u^{a}-u^{0}),\nabla(\tau^{a}- \tau^{0})\right\|_{L^{2}}^{2}+\|\nabla\Lambda^{\beta}(\tau^{a}-\tau^{0})\|_{L^{2}}^{2}+a\|\nabla(\tau^{a}-\tau^{0})\|_{L^{2}}^{2}\\ \nonumber
\leq &\ C\varphi(t)\phi^{2}(t)+\left(C\delta+\frac{k}{100}\right)\|\nabla(u^{a}-u^{0})\|_{H^{1-\beta}}^{2}+\left(C\delta+\frac{1}{50}\right)\|\Lambda^{\beta}(\tau^{a}-\tau^{0})\|_{H^1}^{2}\\\nonumber
&+aC(1+t)^{-\frac{1}{\beta}}\phi(t).\nonumber
\end{align}
Along the same line to the proof of (\ref{454}), we have
\begin{align}\label{4513}
 -&\frac{d}{dt}\langle\nabla (u^{a}-u^{0}),\tau^{a}-\tau^{0}\rangle+\frac{1}{2}\|\nabla (u^a-u^0)\|_{L^2}^{2}=\langle(-\Delta)^{\beta}(\tau^a-\tau^0),\nabla(u^a-u^0)\rangle\\ \nonumber
 &+\langle a\tau^a,\nabla (u^{a}-u^{0})\rangle+\langle u^a\cdot\nabla(\tau^a-\tau^0),\nabla (u^{a}-u^{0})\rangle+\langle Q(\nabla u^a,\tau^{a}-\tau^{0}),\nabla (u^{a}-u^{0})\rangle\\\nonumber
 &+\langle (u^a-u^0)\cdot\nabla\tau^0,\nabla (u^{a}-u^{0})\rangle+\langle Q(\nabla (u^a-u^0),\tau^{0}),\nabla (u^{a}-u^{0})\rangle\\\nonumber
 &-\langle \mathbb{P}(u^a\cdot\nabla (u^{a}-u^{0})),\mathrm{div}(\tau^{a}-\tau^{0})\rangle+\langle \mathbb{P}\mathrm{div}(\tau^{a}-\tau^{0}),\mathrm{div}(\tau^{a}-\tau^{0})\rangle\\\nonumber
 &-\langle \mathbb{P}((u^a-u^0)\cdot\nabla u^{0})),\mathrm{div}(\tau^{a}-\tau^{0})\rangle \\\nonumber
 \lesssim &\ \|\Lambda^{2\beta}(\tau^{a}-\tau^{0})\|_{L^{2}}\|\nabla(u^{a}-u^{0})\|_{L^{2}}+a\|\mathrm{div}\tau^a\|_{L^2}\|u^{a}-u^{0}\|_{L^{2}}\\\nonumber
 &+\| u^{a}\|_{L^\infty}\|\nabla(u^{a}-u^{0})\|_{L^{2}}\|\nabla(\tau^{a}-\tau^{0})\|_{L^{2}}+C\|\nabla u^{a}\|_{L^\infty}\|\nabla(u^{a}-u^{0})\|_{L^{2}}\|\tau^a-\tau^0\|_{L^{2}}\\\nonumber
 &+\|\nabla\tau^{0}\|_{L^\infty}\|\nabla(u^{a}-u^{0})\|_{L^{2}}\|u^a-u^0\|_{L^{2}}+\|\nabla(u^{a}-u^{0})\|_{L^{2}}^{2}\|\tau^0\|_{L^\infty}\\\nonumber
 &+\|u^{a}\|_{L^\infty}\|\nabla(u^{a}-u^{0})\|_{L^{2}}\|\nabla(\tau^{a}-\tau^{0})\|_{L^{2}}+\|\nabla(\tau^{a}-\tau^{0})\|_{L^{2}}^{2}\\\nonumber
 &+\|\nabla u^{0}\|_{L^\infty}\|u^{a}-u^{0}\|_{L^{2}}\|\nabla(\tau^{a}-\tau^{0})\|_{L^{2}}\\\nonumber
 \le &\frac{1}{100}\|\nabla(u^{a}-u^{0})\|_{L^{2}}^{2}+C\|\Lambda^{2\beta}(\tau^{a}-\tau^{0})\|_{L^2}^{2}+C\|\nabla(\tau^{a}-\tau^{0})\|_{L^2}^{2}+Ca(1+t)^{-1}\phi(t)\\\nonumber
 &+C\varphi(t)\phi^{2}(t)+C\delta\left(\|\nabla(u^{a}-u^{0})\|_{L^{2}}^{2}+\|\nabla(\tau^{a}-\tau^{0})\|_{L^{2}}^{2}\right)\nonumber
 \end{align}
Using (\ref{451}),  we deduce that
\begin{align}\label{4514}
 &-\frac{d}{dt}\langle\Lambda^{1-\beta}\nabla (u^{a}-u^{0}),\Lambda^{1-\beta}(\tau^{a}-\tau^{0})\rangle+\frac{1}{2}\|\nabla\Lambda^{1-\beta} (u^a-u^0)\|_{L^2}^{2}\\ \nonumber
 =&\ \langle\Lambda^{1-\beta}(-\Delta)^{\beta}(\tau^a-\tau^0),\Lambda^{1-\beta}\nabla(u^a-u^0)\rangle+\langle a\Lambda^{1-\beta}\tau^a,\nabla \Lambda^{1-\beta}(u^{a}-u^{0})\rangle\\\nonumber
 &+\langle\Lambda^{1-\beta}(u^a\cdot\nabla(\tau^a-\tau^0)),\nabla\Lambda^{1-\beta} (u^{a}-u^{0})\rangle+\langle \Lambda^{1-\beta}Q(\nabla u^a,\tau^{a}-\tau^{0}),\nabla\Lambda^{1-\beta} (u^{a}-u^{0})\rangle \\\nonumber
 &+\langle \Lambda^{1-\beta}((u^a-u^0)\cdot\nabla\tau^0),\nabla\Lambda^{1-\beta} (u^{a}-u^{0})\rangle+\langle \Lambda^{1-\beta}Q(\nabla (u^a-u^0),\tau^{0}),\nabla\Lambda^{1-\beta} (u^{a}-u^{0})\rangle\\\nonumber
 &-\langle \Lambda^{1-\beta}\mathbb{P}(u^a\cdot\nabla (u^{a}-u^{0})),\Lambda^{1-\beta}\mathrm{div}(\tau^{a}-\tau^{0})\rangle
 +\langle \Lambda^{1-\beta}\mathbb{P}\mathrm{div}(\tau^{a}-\tau^{0}),\Lambda^{1-\beta}\mathrm{div}(\tau^{a}-\tau^{0})\rangle\\ \nonumber
 &-\langle \Lambda^{1-\beta}\mathbb{P}((u^a-u^0)\cdot\nabla u^{0})),\Lambda^{1-\beta}\mathrm{div}(\tau^{a}-\tau^{0})\rangle\\ \nonumber
 \triangleq&\sum_{i=1}^{9}L_{i}.
\end{align}
Then we infer that
\begin{align*}
 L_1\lesssim  \|\Lambda^{1+\beta}(\tau^{a}-\tau^{0})\|_{L^{2}}\|\Lambda^{2-\beta}(u^{a}-u^{0})\|_{L^{2}}
\le \frac{1}{100}\|\Lambda^{2-\beta}(u^{a}-u^{0})\|_{L^{2}}^{2}+C\|\Lambda^{1+\beta}(\tau^{a}-\tau^{0})\|_{L^2}^{2},\nonumber
\end{align*}
and
\begin{align*}
L_2\lesssim a\|\Lambda\tau^a\|_{L^2}\|\Lambda^{2-2\beta}(u^{a}-u^{0})\|_{L^{2}}
\lesssim a(1+t)^{-1}\phi(t). 
\end{align*}
By virtue of Lemma \ref{lemma5}, we have
\begin{align*}
L_3
\lesssim &\ \|\Lambda^{2-\beta}(u^{a}-u^{0})\|_{L^{2}}\left(\|u^a\|_{L^\infty}\|\Lambda^{2-\beta}(\tau^{a}-\tau^{0})\|_{L^{2}}+\|\nabla(\tau^{a}-\tau^{0})\|_{L^{\frac{2}{\beta}}}\|\Lambda^{1-\beta}u^{a}\|_{L^{\frac{2}{1-\beta}}}\right)\\
\lesssim &\ \|\Lambda^{2-\beta}(u^{a}-u^{0})\|_{L^{2}}\left(\|u^a\|_{L^\infty}\|\Lambda^{2-\beta}(\tau^{a}-\tau^{0})\|_{L^{2}}+\|\Lambda^{2-\beta}(\tau^{a}-\tau^{0})\|_{L^{2}}\|\Lambda u^{a}\|_{L^{2}}\right)\\
\lesssim &\ \delta (\|\nabla(u^{a}-u^{0})\|_{H^{1-\beta}}^{2}+\|\Lambda^{\beta}(\tau^{a}-\tau^{0})\|_{H^1}^{2}).
\end{align*}
By proceeding as in the estimates of $K_4$, we obtain
\begin{align*}
L_4
\lesssim &\ \|\Lambda^{2-\beta}(u^{a}-u^{0})\|_{L^{2}}\left(\|\nabla u^a\|_{L^\infty}\|\Lambda^{1-\beta}(\tau^{a}-\tau^{0})\|_{L^{2}}+\|\tau^{a}-\tau^{0}\|_{L^\infty}\|\Lambda^{2-\beta}u^{a}\|_{L^{2}}\right)\\ 
\le &\ C\varphi(t)\phi^{2}(t)+\frac{1}{100}\|\Lambda^{2-\beta}(u^{a}-u^{0})\|_{L^{2}}^{2}+C\delta\left(\|\Lambda^{\beta}(\tau^{a}-\tau^{0})\|_{H^1}^2+\|\Lambda^{2-\beta}(u^{a}-u^{0})\|_{L^{2}}^{2}\right),
\end{align*}
and 
\begin{align*}
L_5
 \lesssim &\  \|\Lambda^{2-\beta}(u^{a}-u^{0})\|_{L^{2}}(\|\tau^0\|_{L^\infty}\|\Lambda^{2-\beta}(u^{a}-u^{0})\|_{L^{2}}+\|u^{a}-u^{0}\|_{L^{\infty}}\|\Lambda^{2-\beta}\tau^{0}\|_{L^{2}})\\
 \le &\ C\delta\|\Lambda^{2-\beta}(u^{a}-u^{0})\|_{L^{2}}^{2}+\frac{1}{100}\|\Lambda^{2-\beta}(u^{a}-u^{0})\|_{L^{2}}^{2}+C\varphi(t)\phi^{2}(t). \nonumber
 \end{align*}
By a similar derivation of $L_3$, Then we infer that
 \begin{align*}
L_6
\lesssim \ \delta \|\nabla(u^{a}-u^{0})\|_{H^{1-\beta}}^{2},
~~~
L_7\lesssim  \delta \left(\|\nabla(u^{a}-u^{0})\|_{H^{1-\beta}}^{2}+\|\Lambda^{\beta}(\tau^{a}-\tau^{0})\|_{H^{1}}^{2}\right).
\end{align*}
Moreover, we have
 \begin{align*}
L_8\lesssim \|\Lambda^{\beta+1}(\tau^{a}-\tau^{0})\|_{L^{2}}^{2},
\end{align*}
and
\begin{align*}
L_9
 \le  C\delta(\|\Lambda^{2-\beta}(u^{a}-u^{0})\|_{L^{2}}^{2}+\Lambda^{\beta}(\tau^{a}-\tau^{0})\|_{H^1}^2)+\frac{1}{100}\|\Lambda^{2-\beta}(u^{a}-u^{0})\|_{L^{2}}^{2}+C\varphi(t)\phi^{2}(t). 
\end{align*}
Substituting the estimates of $L_1$ to $L_9$ into (\ref{4514}), we get
\begin{align}\label{4515}
&\frac{d}{dt}\langle\Lambda^{1-\beta}\nabla (u^{a}-u^{0}),\Lambda^{1-\beta}(\tau^{a}-\tau^{0})\rangle+\frac{1}{2}\|\nabla\Lambda^{1-\beta} (u^a-u^0)\|_{L^2}^{2}\\ \nonumber
&\le C\varphi(t)\phi^{2}(t)+Ca(1+t)^{-1}\phi(t)+\frac{1}{25}\|\Lambda^{2-\beta}(u^{a}-u^{0})\|_{L^{2}}^{2}+C\|\Lambda^{1+\beta}(\tau^{a}-\tau^{0})\|_{L^2}^{2}\\ \nonumber
&~~~
+C\delta(\|\nabla(u^{a}-u^{0})\|_{H^{1-\beta}}^{2}+\|\Lambda^{\beta}(\tau^{a}-\tau^{0})\|_{H^{1}}^{2}).
\end{align}
By summarizing (\ref{454}),(\ref{4512}),(\ref{4513}) and (\ref{4515}), for small enough constant $k>0$, it comes out
\begin{align}\label{4516}
  &\frac{d}{dt}\left[\frac{1}{2}\phi^{2}(t)-k\langle\nabla (u^{a}-u^{0}),(\tau^{a}-\tau^{0})\rangle_{H^{1-\beta}}\right]+\frac{k}{4}\|\nabla(u^{a}-u^{0})\|_{H^{1-\beta}}^{2}+\frac{1}{2}\|\Lambda^{\beta}(\tau^{a}-\tau^{0})\|_{H^{1}}^{2}\\ \nonumber
  &\le C\varphi(t)\phi^{2}(t)+a(1+t)^{-\frac{1}{2\beta}}\phi(t).
\end{align}
Using Propositions \ref{4prop1} and \ref{4prop4}, we know  $\varphi(t)\in L^{1}(0,+\infty)$. Then, we deduce from (\ref{4516}) that 
\begin{align}\label{4518}
  \phi(t)\lesssim a(1+t)^{-\frac{1}{2\beta}+1}.
\end{align}
Using (\ref{4518}), when $a^{\frac{2\beta}{\alpha\beta+3\beta-1}}t\le 1$, then we get
\begin{align*}
\phi(t)\lesssim a^{\frac{\beta(1+\alpha)}{\alpha\beta+3\beta-1}}.
\end{align*}
When $a^{\frac{2\beta}{\alpha\beta+3\beta-1}}t\ge 1$, using Proposition \ref{4prop1}, we get
\begin{align*}
\left\|\nabla^{\alpha}\left(u^{a}-u^{0},\tau^{a}-\tau^{0}\right)\right\|_{L^{\infty}([0,\infty),L^{2})}\lesssim (1+t)^{-\frac{\alpha+1}{2}}\lesssim a^{\frac{\beta(1+\alpha)}{\alpha\beta+3\beta-1}}.
\end{align*}
Then we complete the proof. 
\end{proof}

\begin{rema}
   When $\frac{1}{2}\le\beta\le1$, we find $a^{\frac{1}{2\beta}}\le a^{\frac{\beta}{3\beta-1}}\le a^{\frac{1}{2}}$. We conjecture that the sharp uniform vanishing damping rate of $L^2$ norm should be $a^{\frac{1}{2\beta}}$, where $\frac{1}{2}\le\beta\le1$.
\end{rema}

\begin{rema}\label{4reme8}
  In fact, when $\beta=\frac{1}{2}$, from (\ref{4516}), we can get
  \begin{align*}
  \phi(t)\lesssim a\log(1+t).
\end{align*}
For any $\alpha\in [0,1]$, if $a^{\frac{2}{\alpha+1}}t\leq 1$, then we get
\begin{align*}
\phi(t)\lesssim a\log(1+a^{-\frac{2}{\alpha+1}})\lesssim a^{1-\varepsilon},
\end{align*}
where $\varepsilon\in (0,1)$. If $a^{\frac{2}{\alpha+1}}t\geq 1$, using Proposition \ref{4prop1}, we have
\begin{align*}
\left\|\nabla^{\alpha}(u^{a}-u^{0},\tau^{a}-\tau^{0})\right\|_{L^{\infty}([0,\infty),L^{2})}\lesssim (1+t)^{-\frac{1+\alpha}{2}}\lesssim a.
\end{align*}
Hence, we have
  \begin{align*}
  \left\|\nabla^{\alpha}(u^{a}-u^{0},\tau^{a}-\tau^{0})\right\|_{L^{\infty}([0,\infty),L^{2})}\leq Ca ^{1-\varepsilon}.
  \end{align*} 
  \end{rema}

  \textbf{Proof of Theorem \ref{1theo5}:} \\
Combining Proposition \ref{4prop5} and Remark \ref{4reme8}, we finish the proof. \hfill$\Box$

\subsection{Time decay rates and sharp vanishing damping rates of $\mathrm{tr}\tau^a$ }

Next we introduce two useful lemmas to improve the time decay rate for $\mathrm{tr}\tau^a$. 

\begin{lemm}\label{lemma2}\cite{Bahouri2011}
		Let $\mathscr{C}$ be an annulus. Positive constants $c$ and $C$ exist such that for any $p\in[1,+\infty]$ and any couple $(t,\lambda)$ of positive real numbers, we have
		\begin{align*}
			{\rm Supp}~\hat{u} \subset \lambda\mathscr{C} \Rightarrow \|e^{-t(-\Delta)^{\beta}}u\|_{L^p} \leq Ce^{-ct\lambda^{2\beta}}\|u\|_{L^p}.
		\end{align*}
	\end{lemm}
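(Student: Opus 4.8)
The plan is to realize the semigroup $e^{-t(-\Delta)^{\beta}}$ acting on frequency-localized functions as convolution against an explicit kernel and then reduce everything to an $L^1$ bound on that kernel via Young's inequality. Since $\|e^{-t(-\Delta)^{\beta}}u\|_{L^p}=\|K_{t,\lambda}\ast u\|_{L^p}\leq \|K_{t,\lambda}\|_{L^1}\|u\|_{L^p}$ for the relevant kernel $K_{t,\lambda}$, the entire statement collapses to showing $\|K_{t,\lambda}\|_{L^1}\lesssim e^{-ct\lambda^{2\beta}}$. I would first fix a cutoff $\phi\in\mathscr{D}(\mathbb{R}^d)$ supported in an annulus $\widetilde{\mathscr{C}}$ slightly larger than $\mathscr{C}$ and identically equal to $1$ on $\mathscr{C}$. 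Because $\mathrm{Supp}\,\hat u\subset\lambda\mathscr{C}$, we have $\hat u(\xi)=\phi(\xi/\lambda)\hat u(\xi)$, whence $e^{-t(-\Delta)^{\beta}}u=K_{t,\lambda}\ast u$ with $K_{t,\lambda}=\mathscr{F}^{-1}\!\left(e^{-t|\xi|^{2\beta}}\phi(\xi/\lambda)\right)$.

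Next I would rescale to eliminate the dependence on $\lambda$. Setting $s=t\lambda^{2\beta}$ and $g_s(y)=\mathscr{F}^{-1}\!\left(e^{-s|\eta|^{2\beta}}\phi(\eta)\right)(y)$, the change of variables $\xi=\lambda\eta$ gives $K_{t,\lambda}(x)=\lambda^{d}g_s(\lambda x)$ and therefore $\|K_{t,\lambda}\|_{L^1}=\|g_s\|_{L^1}$. It thus suffices to prove the uniform-in-$s$ bound $\|g_s\|_{L^1}\lesssim e^{-cs}$ for all $s>0$, which is a purely scalar statement about the smooth, compactly supported symbol $h_s(\eta):=e^{-s|\eta|^{2\beta}}\phi(\eta)$.

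The heart of the argument, and the main obstacle, is the kernel estimate. I would obtain pointwise decay of $g_s$ from the identity $(1+|y|^2)^N g_s(y)=\mathscr{F}^{-1}\!\left((I-\Delta_\eta)^N h_s\right)(y)$, reducing matters to the claim $\|(I-\Delta_\eta)^N h_s\|_{L^1_\eta}\lesssim e^{-cs}$ for each fixed $N$. The difficulty is that differentiating $e^{-s|\eta|^{2\beta}}$ produces prefactors that are polynomial in $s$ of degree up to $2N$; these are controllable only because $\phi$ is supported \emph{away from the origin}, so that $|\eta|^{2\beta}\geq a^{2\beta}=:2c>0$ on $\mathrm{Supp}\,\phi$. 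On this annulus the map $\eta\mapsto|\eta|^{2\beta}$ is smooth, so each derivative of $h_s$ is a finite sum of terms $P(s,\eta)\,e^{-s|\eta|^{2\beta}}$ with $P$ polynomial of degree $\le 2N$ in $s$ and smooth in $\eta$; using $e^{-s|\eta|^{2\beta}}\leq e^{-2cs}$ and $(1+s^{2N})e^{-2cs}\lesssim e^{-cs}$ absorbs the spurious powers of $s$. This is precisely where the annulus hypothesis is indispensable—the bound genuinely fails for frequencies near zero—and where the noninteger exponent $2\beta$ forces one to rely on the smoothness of $|\eta|^{2\beta}$ on $\widetilde{\mathscr{C}}$ rather than on any explicit Gaussian formula.

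Finally, choosing $2N>d$ yields $|g_s(y)|\lesssim e^{-cs}(1+|y|^2)^{-N}$ with $(1+|y|^2)^{-N}\in L^1_y$, so that $\|g_s\|_{L^1}\lesssim e^{-cs}$; undoing the rescaling gives $\|K_{t,\lambda}\|_{L^1}\lesssim e^{-ct\lambda^{2\beta}}$. The conclusion for every $p\in[1,+\infty]$, including the endpoint $p=\infty$, then follows at once from the single Young estimate $\|K_{t,\lambda}\ast u\|_{L^p}\leq\|K_{t,\lambda}\|_{L^1}\|u\|_{L^p}$, which completes the proof.
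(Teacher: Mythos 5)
Your proof is correct, and since the paper does not prove this lemma at all (it is quoted directly from \cite{Bahouri2011}), the relevant comparison is with the standard argument in that reference (its Lemma 2.4, stated for the heat semigroup), which your proposal reproduces faithfully in the fractional setting: reduction to an $L^1$ kernel bound via Young's inequality, rescaling to $s=t\lambda^{2\beta}$, and the weighted estimate on $(1+|y|^2)^N g_s(y)$ exploiting that $|\eta|^{2\beta}$ is smooth and bounded below on the compact annulus. All steps, including the absorption of the polynomial-in-$s$ prefactors by $e^{-2cs}$ and the choice $2N>d$, are carried out accurately, so nothing needs to be changed.
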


\begin{lemm}\label{lemma7}\cite{Wu2}
Assume $0<s_{1}\leqslant s_{2}$. Then for some constant $C>0$,
\begin{align*}
\int_{0}^{t}(1+t-t^{\prime})^{-s_{1}}(1+t^{\prime})^{-s_{2}}dt^{\prime}\leqslant C\begin{cases}(1+t)^{-s_{1}},&\textit{if }s_{2}>1,\\ (1+t)^{-s_{1}}\ln(1+t),&\textit{if }s_{2}=1,\\ (1+t)^{1-s_{1}-s_{2}},&\textit{if }s_{2}<1.\end{cases}
\end{align*}
	\end{lemm}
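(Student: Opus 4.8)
The plan is to prove this inequality by elementary single-variable calculus, splitting the convolution integral at the midpoint $t'=t/2$ and, on each half, freezing the slowly varying factor at its extreme value. Since for $t$ in a bounded set both sides are comparable to a positive constant, I would first reduce to the regime of large $t$, so that factors like $1+t/2$ and $1+t$ are interchangeable up to absolute constants.

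First I would handle the piece $\int_0^{t/2}$. On this interval one has $t-t'\ge t/2$, hence $(1+t-t')^{-s_1}\lesssim (1+t)^{-s_1}$, so this contribution is bounded by $(1+t)^{-s_1}\int_0^{t/2}(1+t')^{-s_2}\,dt'$. The remaining one-variable integral $\int_0^{t/2}(1+r)^{-\sigma}\,dr$ is the textbook computation obeying a trichotomy: it is $O(1)$ when $\sigma>1$, $O(\ln(1+t))$ when $\sigma=1$, and $O((1+t)^{1-\sigma})$ when $\sigma<1$. Taking $\sigma=s_2$ already reproduces exactly the three cases of the claimed right-hand side for this half.

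Next I would treat the piece $\int_{t/2}^t$. Here $t'\ge t/2$, so $(1+t')^{-s_2}\lesssim (1+t)^{-s_2}$, and after the substitution $r=t-t'$ the contribution is $\lesssim (1+t)^{-s_2}\int_0^{t/2}(1+r)^{-s_1}\,dr$. Applying the same trichotomy now with $\sigma=s_1$, I would check case by case that each resulting expression is dominated by the stated bound. The essential input is the hypothesis $s_1\le s_2$: it ensures, for instance, that $(1+t)^{1-s_1-s_2}\lesssim (1+t)^{-s_1}$ exactly when $s_2\ge 1$, and that any logarithmic factor arising at a critical exponent ($s_1=1$, with $s_2>1$) is absorbed because the strictly positive surplus power $(1+t)^{-(s_2-1)}$ dominates $\ln(1+t)$.

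The computation is routine; the only point needing care — the ``main obstacle'' in the sense of bookkeeping — is organizing the case analysis so that the two halves are each shown to be $\lesssim$ the single claimed right-hand side. I would verify this across the cases $s_2>1$, $s_2=1$, $s_2<1$, together with the sub-cases $s_1<1$, $s_1=1$, $s_1>1$ permitted by $s_1\le s_2$, confirming in particular that in the regime $s_2>1$ both the logarithmic and the $(1+t)^{1-s_1-s_2}$ contributions coming from near the diagonal are controlled by $(1+t)^{-s_1}$. Summing the two halves then yields the stated trichotomy and completes the proof.
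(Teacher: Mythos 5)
Your proof is correct and complete: splitting at $t'=t/2$, freezing the slowly varying factor on each half ($(1+t-t')^{-s_1}\lesssim(1+t)^{-s_1}$ on $[0,t/2]$, $(1+t')^{-s_2}\lesssim(1+t)^{-s_2}$ on $[t/2,t]$), invoking the trichotomy for $\int_0^{t/2}(1+r)^{-\sigma}\,dr$ with $\sigma=s_2$ and $\sigma=s_1$ respectively, and using $s_1\le s_2$ to absorb the near-diagonal half into the stated bound (including the borderline sub-case $s_1=1<s_2$, where $(1+t)^{-(s_2-1)}\ln(1+t)$ is bounded) is exactly the standard argument. The paper itself offers no proof of Lemma \ref{lemma7} --- it is quoted from \cite{Wu2} --- and your argument is the canonical one used in that literature, so there is nothing to compare beyond noting the agreement.
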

Firstly, we discover $\tilde{\tau}^{a}=\mathrm{tr}\tau$ satisfies 
    	\begin{align}\label{eq3}
		\left\{\begin{array}{l}
			\partial_{t}\tilde{\tau}^{a}+(-\Delta)^{\beta}\tilde{\tau}^{a}+a\tilde{\tau}^{a}=Q_{1}(u ^{a},\nabla\tau^{a})+Q_{2}(\nabla u^{a},\tau^{a}),\\[1ex]
  \tilde{\tau}^{a}(0)=\tilde{\tau}_{0},\\[1ex]
		\end{array}\right.
	\end{align}
where $Q_{1}=-\text{tr}(u^{a}\cdot\nabla\tau^{a})$ and $Q_{2}=-\text{tr}Q(\nabla u^{a},\tau^{a})$. 
The linearized equation is 
\begin{align*}
 \partial_{t}\tilde{\tau}^{a}+(-\Delta)^{\beta}\tilde{\tau}^{a}+a\tilde{\tau}^{a}=0,\ \tilde{\tau}^{a}(0)=\tilde{\tau}_{0},
\end{align*}
whose solution can be written as
\begin{align*}
\tilde{\tau}^{a}(t)=e^{-at}e^{-(-\Delta)^{\beta}t}\tilde{\tau}_{0}.
\end{align*}
In the low frequencies regime $|\xi|\le1$, we have
\begin{align*}
\|e^{-at}e^{-(-\Delta)^{\beta}t}\tilde{\tau}^{a}\|_{L^2}\le e^{-|\xi|^{2\beta}t},
\end{align*}
For high frequencies $|\xi|> 1$, we have
\begin{align*}
\|e^{-at}e^{-(-\Delta)^{\beta}t}\tilde{\tau}^{a}\|_{L^\infty}\lesssim e^{-\lambda t}.
\end{align*}
where the constant $\lambda$ is independent of $a$. Then we can  improve the time decay rate of  $\|\tilde{\tau}^{a}\|_{H^1}$.

\begin{prop}\label{4prop12}
Under the conditions in Proposition \ref{4prop1}, there holds
\begin{align*}
\|\tilde{\tau}^{a}\|_{L^2}\le C(1+t)^{-\frac{1}{2\beta}},
\end{align*}
\end{prop}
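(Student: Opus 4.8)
The plan is to exploit the fact that $\tilde\tau^a=\mathrm{tr}\,\tau^a$ solves the scalar fractional heat equation with damping \eqref{eq3}, which decouples it from the velocity at the linear level and lets us recover the genuine fractional--diffusion rate $(1+t)^{-\frac{1}{2\beta}}$, strictly better than the $(1+t)^{-\frac12}$ available for the full pair $(u^a,\tau^a)$ from Proposition \ref{4prop1}. Writing $N=Q_1(u^a,\nabla\tau^a)+Q_2(\nabla u^a,\tau^a)$, Duhamel's formula gives
\begin{align*}
\tilde\tau^a(t)=e^{-at}e^{-(-\Delta)^\beta t}\tilde\tau_0+\int_0^t e^{-a(t-s)}e^{-(-\Delta)^\beta(t-s)}N(s)\,ds.
\end{align*}
Since I want a bound uniform in $a\in(0,1]$, I would simply estimate $e^{-a(t-s)}\le 1$ throughout, so the damping only helps and never hurts.

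\textbf{Linear part.} I would split $\tilde\tau_0$ into low and high frequencies. For the low part I use $\|e^{-(-\Delta)^\beta t}\dot{\Delta}_{j}\tilde\tau_0\|_{L^2}\lesssim e^{-c2^{2\beta j}t}2^{j}\|\tilde\tau_0\|_{\dot{B}_{2,\infty}^{-1}}$ (Lemma \ref{lemma2} together with the definition of $\dot{B}_{2,\infty}^{-1}$) and sum over $j\le0$; the sum is governed by the dyadic scale $2^{2\beta j}t\sim1$, i.e.\ $2^{j}\sim t^{-\frac{1}{2\beta}}$, and yields $(1+t)^{-\frac{1}{2\beta}}$. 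Here I use that $\tilde\tau_0\in\dot{B}_{2,\infty}^{-1}$, which follows from $\tau_0\in\dot{B}_{2,\infty}^{-1}$ because the trace is a bounded linear operation. For the high frequencies $j>0$, Lemma \ref{lemma2} gives exponential decay $e^{-c2^{2\beta j}t}$, so this contribution is $\lesssim e^{-ct}\|\tilde\tau_0\|_{L^2}$ for $t\ge1$ and trivially bounded for $t\le1$, hence $\lesssim(1+t)^{-\frac{1}{2\beta}}$.

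\textbf{Nonlinear part.} For fixed $\sigma=t-s$ I would again split the semigroup action on $N(s)$ at $|\xi|=1$. On low frequencies, Plancherel and $|\widehat N|\le\|N\|_{L^1}$ give
\begin{align*}
\|e^{-(-\Delta)^\beta\sigma}N_{\mathrm{low}}(s)\|_{L^2}\lesssim\Big(\int_{|\xi|\le1}e^{-2|\xi|^{2\beta}\sigma}\,d\xi\Big)^{1/2}\|N(s)\|_{L^1}\lesssim(1+\sigma)^{-\frac{1}{2\beta}}\|N(s)\|_{L^1},
\end{align*}
while on high frequencies Lemma \ref{lemma2} yields $\|e^{-(-\Delta)^\beta\sigma}N_{\mathrm{high}}(s)\|_{L^2}\lesssim e^{-c\sigma}\|N(s)\|_{L^2}$. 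The decay rates of Proposition \ref{4prop1} give $\|N(s)\|_{L^1}\lesssim\|(u^a,\tau^a)\|_{L^2}\|\nabla(u^a,\tau^a)\|_{L^2}\lesssim(1+s)^{-\frac32}$, and, using Lemma \ref{lemma3} together with the $H^s$ bound of Theorem \ref{1theo1}, $\|N(s)\|_{L^2}\lesssim\|(u^a,\tau^a)\|_{L^\infty}\|\nabla(u^a,\tau^a)\|_{L^2}\lesssim(1+s)^{-1}$. Integrating in $s$, the low-frequency contribution $\int_0^t(1+t-s)^{-\frac{1}{2\beta}}(1+s)^{-\frac32}\,ds$ is controlled by Lemma \ref{lemma7} (with $s_1=\tfrac{1}{2\beta}\le1<\tfrac32=s_2$) by $(1+t)^{-\frac{1}{2\beta}}$, and the high-frequency contribution $\int_0^t e^{-c(t-s)}(1+s)^{-1}\,ds\lesssim(1+t)^{-1}\lesssim(1+t)^{-\frac{1}{2\beta}}$ since $\beta\ge\frac12$. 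Combining with the linear part closes the estimate.

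\textbf{Main obstacle.} The delicate point is that the naive $L^1$--$L^2$ smoothing bound $\sigma^{-\frac{1}{2\beta}}\|N(s)\|_{L^1}$ is non-integrable near the diagonal $s=t$. The frequency splitting is exactly what repairs this: restricting the $L^1$ estimate to $|\xi|\le1$ replaces the singular factor $\sigma^{-\frac{1}{2\beta}}$ by the integrable $(1+\sigma)^{-\frac{1}{2\beta}}$, while the high-frequency piece is absorbed by the exponential gain of Lemma \ref{lemma2} with a constant $c$ independent of $a$. Keeping every constant independent of $a$ (which is precisely why I discard, rather than use, the damping factor $e^{-a(t-s)}$) is the other point requiring care, since the whole purpose of the statement is a rate uniform in $a\in(0,1]$.
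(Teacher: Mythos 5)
Your proposal is correct and follows essentially the same route as the paper: Duhamel's formula with the damping factor $e^{-a(t-t')}$ simply dropped, the linear part controlled via Lemma \ref{lemma2} and $\tilde\tau_0\in\dot{B}^{-1}_{2,\infty}\cap L^2$, the nonlinear Duhamel term split at $|\xi|=1$ with the $L^1$ bound $\|N\|_{L^1}\lesssim(1+s)^{-3/2}$ on low frequencies and an $L^2$ bound on high frequencies, and Lemma \ref{lemma7} to close the convolution integral. The only differences are cosmetic: the paper bounds the high-frequency piece by $(1+t-t')^{-\frac{1}{2\beta}}\|N\|_{L^2}$ with $\|N\|_{L^2}\lesssim(1+s)^{-3/2}$ rather than keeping the exponential factor as you do, and it treats the linear term by a single Besov sum over all dyadic blocks instead of your low/high split; both variants yield the same rate.
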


\begin{proof}
Using Duhamel's formula, $\tilde{\tau}^{a}$ can be represented as 
\begin{align*}
\tilde{\tau}^{a}(t)=e^{-at}e^{-(-\Delta)^{\beta}t}\tilde{\tau}_{0}+\int_{0}^{t}e^{-a(t-t^{\prime})}e^{-(-\Delta)^{\beta}(t-t^{\prime})}[Q_{1}(u^{a},\nabla\tau^{a})+Q_{2}(\nabla u^{a},\tau^{a})]dt^{\prime}.
\end{align*} 
By Lemmas \ref{lemma1} and \ref{lemma2}, for any $t\ge 1$,  we get
\begin{align*}
\|e^{-at}e^{-(-\Delta)^{\beta}t}\tilde{\tau}_{0}\|_{L^2}&=\left(\sum_{j\in \mathbb{Z}}\| e^{-at}e^{-(-\Delta)^{\beta}t}\dot\Delta_{j}\tilde{\tau}_{0}\|_{L^2}^{2}\right)^{\frac{1}{2}}\\ \nonumber
& \lesssim \left(\sum_{j\in \mathbb{Z}}e^{-2c2^{2j\beta}t}\|\dot\Delta_{j}\tilde{\tau}_{0}\|_{L^2}^{2}\right)^{\frac{1}{2}} \\ \nonumber
&\lesssim \|\tilde{\tau}_{0}\|_{\dot B^{-1}_{2,\infty}}\left(\sum_{j\in \mathbb{Z}}e^{-2c2^{2j\beta}t}2^{2j}\right)^{\frac{1}{2}}\\ \nonumber
    &\lesssim \|\tilde{\tau}_{0}\|_{\dot B^{-1}_{2,\infty}}t^{-\frac{1}{2\beta}}.
\end{align*}
When $t<1$, we know
\begin{align*}
\|e^{-at}e^{-(-\Delta)^{\beta}t}\tilde{\tau}_{0}\|_{L^2}\lesssim \|\tilde{\tau}_{0}\|_{ L^{2}}.
\end{align*}
Combining above estimates, we deduce that
\begin{align}\label{4121}
\|e^{-at}e^{-(-\Delta)^{\beta}t}\tilde{\tau}_{0}\|_{L^2}\lesssim (1+t)^{-\frac{1}{2\beta}}.
\end{align}
For nonlinear term $Q_{1}(u^{a},\nabla\tau^{a})$, we write 
\begin{align*}
&\left\|\int_{0}^{t}e^{-a(t-t^{\prime})}e^{-(-\Delta)^{\beta}(t-t^{\prime})}[Q_{1}(u^{a},\nabla\tau^{a})]dt^{\prime}\right\|_{L^2}  \\ \nonumber
\lesssim &\int_{0}^{t}\left(\int_{|\xi|<1}e^{-2|\xi|^{2\beta}(t-t^{\prime})}|\mathscr{F}(Q_{1}(u^{a},\nabla\tau^{a}))|^{2}d\xi\right)^{\frac{1}{2}} dt^{\prime}\\\nonumber
&+\int_{0}^{t}\left(\int_{|\xi|\ge1}e^{-2|\xi|^{2\beta}(t-t^{\prime})}|\mathscr{F}(Q_{1}(u^{a},\nabla\tau^{a}))|^{2}d\xi\right)^{\frac{1}{2}} dt^{\prime}\\ \nonumber
=&N_1+N_2.  \nonumber
\end{align*}
Using Proposition \ref{4prop1} and Lemma \ref{lemma7}, one can deduce that
\begin{align*}
N_1
&\lesssim \int_{0}^{t}(1+t-t^{\prime})^{-\frac{1}{2\beta}}\|(Q_{1}(u^{a},\nabla\tau^{a}))\|_{L^{1}}dt^{\prime}\\ \nonumber
&\lesssim \int_{0}^{t}(1+t-t^{\prime})^{-\frac{1}{2\beta}}\|u^{a}\|_{L^{2}}\|\nabla\tau^{a}\|_{L^{2}}dt^{\prime}\\ \nonumber
&\lesssim \int_{0}^{t}(1+t-t^{\prime})^{-\frac{1}{2\beta}}(1+t^{\prime})^{-\frac{3}{2}}dt^{\prime}\\ \nonumber
&\lesssim (1+t)^{-\frac{1}{2\beta}}.
\end{align*}
Due to Proposition \ref{4prop1}, Lemmas \ref{lemma2} and \ref{lemma7}, we obtain
\begin{align*}
N_2
&\lesssim \int_{0}^{t}(1+t-t^{\prime})^{-\frac{1}{2\beta}}\|(Q_{1}(u^{a},\nabla\tau^{a}))\|_{L^{2}}dt^{\prime}\\ \nonumber
&\lesssim \int_{0}^{t}(1+t-t^{\prime})^{-\frac{1}{2\beta}}\|u^{a}\|_{L^{\infty}}\|\nabla\tau^{a}\|_{L^{2}}dt^{\prime}\\ \nonumber
&\lesssim \int_{0}^{t}(1+t-t^{\prime})^{-\frac{1}{2\beta}}(1+t^{\prime})^{-\frac{3}{2}}dt^{\prime}\\ \nonumber
&\lesssim (1+t)^{-\frac{1}{2\beta}}.
\end{align*}
Therefore, we conclude that
\begin{align}\label{4122}
\left\|\int_{0}^{t}e^{-a(t-t^{\prime})}e^{-(-\Delta)^{\beta}(t-t^{\prime})}[Q_{1}(u^{a},\nabla\tau^{a})]dt^{\prime}\right\|_{L^2}\lesssim (1+t)^{-\frac{1}{2\beta}}.
\end{align}
Similarly, we have
\begin{align}\label{4123}
\left\|\int_{0}^{t}e^{-a(t-t^{\prime})}e^{-(-\Delta)^{\beta}(t-t^{\prime})}[Q_{2}(\nabla u^{a},\tau^{a})]dt^{\prime}\right\|_{L^2}\lesssim (1+t)^{-\frac{1}{2\beta}}.
\end{align}
According to (\ref{4121}), (\ref{4122}) and (\ref{4123}), we get
\begin{align}\label{4124}
\|\tilde{\tau}^{a}\|_{L^2}\lesssim (1+t)^{-\frac{1}{2\beta}}.
\end{align}

\end{proof}
Combining Proposition \ref{4prop5} and proposition \ref{4prop12}, we have 
\begin{coro}\label{4coro13}
Under the conditions in Proposition \ref{4prop5}, there holds
\begin{align*}
\|\tilde{\tau}^{a}-\tilde{\tau}^{0}\|_{L^{\infty}([0,\infty),L^{2})}\le Ca^{\frac{1}{2\beta}}.
\end{align*}
\end{coro}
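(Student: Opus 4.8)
The plan is to prove the uniform-in-time bound by a two-regime argument, splitting the time axis at the crossover time $t\sim a^{-1}$ and using a different ingredient in each regime. First I would record the elementary reduction $\|\tilde{\tau}^{a}-\tilde{\tau}^{0}\|_{L^{2}}=\|\mathrm{tr}(\tau^{a}-\tau^{0})\|_{L^{2}}\le C\|\tau^{a}-\tau^{0}\|_{L^{2}}$, which transfers any bound on the full tensor difference to the scalar trace. The two inputs are: (i) the short-time growth estimate (\ref{4518}) obtained inside the proof of Proposition \ref{4prop5}, namely $\phi(t)\lesssim a(1+t)^{1-\frac{1}{2\beta}}$, which in particular controls $\|\tau^{a}-\tau^{0}\|_{L^{2}}$; and (ii) the sharp trace decay of Proposition \ref{4prop12}, $\|\tilde{\tau}^{a}\|_{L^{2}}\lesssim(1+t)^{-\frac{1}{2\beta}}$, valid uniformly for $a\in[0,1]$ and hence also for $\tilde{\tau}^{0}$ (the $a=0$ case).

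In the short-time regime $1+t\le a^{-1}$ I would invoke (i): since $\frac{1}{2}<\beta<1$ forces $1-\frac{1}{2\beta}>0$, the right-hand side is increasing in $t$, so evaluating at the endpoint $1+t=a^{-1}$ gives $\|\tau^{a}-\tau^{0}\|_{L^{2}}\lesssim a\cdot(a^{-1})^{1-\frac{1}{2\beta}}=a^{\frac{1}{2\beta}}$. In the long-time regime $1+t\ge a^{-1}$ I would instead use (ii) together with the triangle inequality, $\|\tilde{\tau}^{a}-\tilde{\tau}^{0}\|_{L^{2}}\le\|\tilde{\tau}^{a}\|_{L^{2}}+\|\tilde{\tau}^{0}\|_{L^{2}}\lesssim(1+t)^{-\frac{1}{2\beta}}\le(a^{-1})^{-\frac{1}{2\beta}}=a^{\frac{1}{2\beta}}$. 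Taking the supremum over all $t\ge0$ and combining the two regimes yields the claimed bound $\|\tilde{\tau}^{a}-\tilde{\tau}^{0}\|_{L^{\infty}([0,\infty),L^{2})}\le Ca^{\frac{1}{2\beta}}$.

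The conceptual point, and the reason this rate beats the naive $a^{\frac{\beta}{3\beta-1}}$ that one obtains by setting $\alpha=0$ directly in Proposition \ref{4prop5}, is that the trace $\tilde{\tau}^{a}$ decays at the enhanced rate $(1+t)^{-\frac{1}{2\beta}}$ rather than the generic $L^{2}$ rate $(1+t)^{-1/2}$ of the full solution. This enhancement, coming from the scalar fractional-heat structure of (\ref{eq3}) exploited in Proposition \ref{4prop12}, is precisely what lets the long-time regime absorb the crossover value. The main thing to verify carefully is therefore the exponent balance: one checks that the growth exponent $1-\frac{1}{2\beta}$ of $\phi$ and the decay exponent $\frac{1}{2\beta}$ of $\tilde{\tau}^{a}$ sum to $1$, so that the two estimates meet at $1+t=a^{-1}$ and both produce the common power $a^{\frac{1}{2\beta}}$. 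No new energy estimate is required; the entire analytic burden has already been discharged in Propositions \ref{4prop5} and \ref{4prop12}, and the corollary is their balanced combination.
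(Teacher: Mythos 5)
Your proposal is correct and follows essentially the same two-regime argument as the paper: the paper likewise splits time at $at\sim 1$, applies (\ref{4518}) with the exponent $1-\frac{1}{2\beta}>0$ in the short-time regime, and uses Proposition \ref{4prop12} (uniformly in $a$, hence for both $\tilde{\tau}^{a}$ and $\tilde{\tau}^{0}$) with the triangle inequality in the long-time regime. Your added remarks on the trace reduction and on why the enhanced $(1+t)^{-\frac{1}{2\beta}}$ decay of $\mathrm{tr}\,\tau^{a}$ is what makes the rate $a^{\frac{1}{2\beta}}$ attainable simply make explicit what the paper leaves implicit.
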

\begin{proof}
According to (\ref{4518}), when $at\le 1$, then we get
\begin{align*}
\|\tilde{\tau}^{a}-\tilde{\tau}^{0}\|_{L^{2}}\lesssim a(1+t)^{-\frac{1}{2\beta}+1}\lesssim a^{\frac{1}{2\beta}}.
\end{align*}
When $at> 1$, by (\ref{4124}),
\begin{align*}
\|\tilde{\tau}^{a}-\tilde{\tau}^{0}\|_{L^{2}}\lesssim (1+t)^{-\frac{1}{2\beta}}\lesssim a^{\frac{1}{2\beta}}.
\end{align*}
Then we complete the proof. 
\end{proof}
\textbf{Proof of Theorem \ref{1theo6}:} \\
Combining Proposition \ref{4prop12} and Corollary \ref{4coro13}, we finish the proof. \hfill$\Box$

Finally, we show that the vanishing damping rate in Corollary \ref{4coro13}  is sharp.
\begin{prop}\label{4prop14}
 There exists initial data $(u_0, \tau_0)$ which satisfies the conditions in Proposition \ref{4prop5} such
that the following estimates hold: 
\begin{align*}
\|\tilde{\tau}^{a}-\tilde{\tau}^{0}\|_{L^{\infty}([0,\infty),L^{2})}\ge Ca^{ \frac{1}{2\beta}}.
\end{align*}
\end{prop}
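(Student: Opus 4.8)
The plan is to isolate the leading linear contribution to $\tilde\tau^a-\tilde\tau^0$, show that it already saturates the rate $a^{\frac{1}{2\beta}}$, and then prove that every genuinely nonlinear or damping-induced correction is of strictly higher order. First I would fix the data concretely: take $\tau_0=\delta\,\rho(x)\,\mathrm{Id}$ with $\rho$ a fixed Schwartz bump satisfying $\int_{\mathbb{R}^2}\rho\,dx\neq0$, and $u_0$ a small divergence-free field compatible with Proposition \ref{4prop5} (e.g. $u_0=0$), normalised so that $\|(u_0,\tau_0)\|_{H^s\cap\dot{B}^{-1}_{2,\infty}}\le\delta$. Then $\tau_0$ is symmetric, $c_0:=|\int_{\mathbb{R}^2}\mathrm{tr}\,\tau_0\,dx|=2\delta|\int\rho|>0$, and since $\tilde\tau_0=\mathrm{tr}\,\tau_0\in L^1$ its Fourier transform is continuous, so there is $\eta>0$ with $|\widehat{\tilde\tau}_0(\xi)|\ge c_0/2$ for $|\xi|\le\eta$. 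Subtracting the Duhamel formula of Proposition \ref{4prop12} for $\tilde\tau^a$ from the one with $a=0$ gives $w:=\tilde\tau^a-\tilde\tau^0=w_L+w_N$, where $w_L(t)=(e^{-at}-1)e^{-(-\Delta)^{\beta}t}\tilde\tau_0$ and $w_N(t)=\int_0^t e^{-(-\Delta)^{\beta}(t-t')}\big[e^{-a(t-t')}(G^a-G^0)(t')+(e^{-a(t-t')}-1)G^0(t')\big]\,dt'$, with $G^a=Q_1(u^a,\nabla\tau^a)+Q_2(\nabla u^a,\tau^a)$. I would evaluate all quantities at the time $t_a=a^{-1}$.

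For the lower bound on $w_L$ I restrict the Plancherel integral to the ball $|\xi|\le a^{\frac{1}{2\beta}}$, which is contained in $\{|\xi|\le\eta\}$ once $a$ is small. On that ball $|\xi|^{2\beta}t_a\le1$ and $at_a=1$, so $|(e^{-at_a}-1)e^{-|\xi|^{2\beta}t_a}|\ge(1-e^{-1})e^{-1}=:c_1>0$; combining this with $|\widehat{\tilde\tau}_0|\ge c_0/2$ and the fact that in $d=2$ the ball has measure $\sim a^{\frac{1}{\beta}}$, I obtain $\|w_L(t_a)\|_{L^2}\ge\tfrac{c_0c_1}{2}\,|B(0,a^{\frac{1}{2\beta}})|^{\frac12}\gtrsim c_0\,a^{\frac{1}{2\beta}}$. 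This mirrors the low-frequency argument used for the lower bound in Proposition \ref{3prop4}.

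It then remains to show $\|w_N(t_a)\|_{L^2}=o(a^{\frac{1}{2\beta}})$. The piece carrying $(e^{-a(t-t')}-1)G^0$ is, using $|e^{-a(t-t')}-1|\le1$, dominated by the nonlinear part of $\tilde\tau^0$, which by the argument of Proposition \ref{4prop12} (with $a=0$) and the quadratic structure of $G^0$ is $\lesssim\delta^2(1+t_a)^{-\frac{1}{2\beta}}=\delta^2 a^{\frac{1}{2\beta}}$; since $c_0\sim\delta$ this is negligible relative to $\|w_L(t_a)\|_{L^2}$ for $\delta$ small. For the piece with $G^a-G^0$ I would expand each quadratic difference as $Q_i(u^a-u^0,\,\cdot\,)+Q_i(\,\cdot\,,\tau^a-\tau^0)$, estimate it in $L^1$ and $L^2$ by pairing the uniform decay rates of Proposition \ref{4prop1} for $(u^a,\tau^a)$ with the vanishing-damping bounds $\|\nabla^{\alpha}(u^a-u^0,\tau^a-\tau^0)\|_{L^2}\lesssim a^{\frac{\beta(1+\alpha)}{\alpha\beta+3\beta-1}}$ of Proposition \ref{4prop5} for $\alpha=0,1$, and then propagate through $e^{-(-\Delta)^{\beta}(t-t')}$ via the low/high-frequency splitting of Lemma \ref{lemma2} (rate $(1+t-t')^{-\frac{1}{2\beta}}$ from $L^1\to L^2$ on low frequencies, exponential decay on high ones) together with the convolution Lemma \ref{lemma7}. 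Each resulting power of $a$ exceeds $\frac{1}{2\beta}$ by a fixed positive amount (up to a harmless $\ln(1+t_a)=\ln a^{-1}$), so this contribution is $o(a^{\frac{1}{2\beta}})$.

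Combining the three estimates at $t=t_a$ yields $\|\tilde\tau^a-\tilde\tau^0\|_{L^\infty([0,\infty),L^2)}\ge\|w(t_a)\|_{L^2}\ge\|w_L(t_a)\|_{L^2}-\|w_N(t_a)\|_{L^2}\gtrsim a^{\frac{1}{2\beta}}$ for $\delta$ and $a$ small enough, which is the assertion. The main obstacle is the final step: one must check that every term produced by the nonlinear difference $G^a-G^0$, after inserting the sharp but $\beta$-dependent exponents of Proposition \ref{4prop5} and integrating against the fractional heat semigroup, carries an exponent strictly larger than $\frac{1}{2\beta}$. Verifying this requires tracking the interplay between the orders $\alpha=0,1$ in Proposition \ref{4prop5}, the $d=2$ measure gain in the low-frequency bound, and the three regimes of Lemma \ref{lemma7}; everywhere else the quadratic smallness (a factor $\delta^2$ against $c_0\sim\delta$) makes the corrections harmless.
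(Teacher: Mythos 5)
Your proposal is correct, and its skeleton coincides with the paper's proof: the same Duhamel splitting of $\tilde\tau^a-\tilde\tau^0$ into the linear part $(e^{-at}-1)e^{-(-\Delta)^{\beta}t}\tilde\tau_0$ plus two nonlinear corrections, the same low-frequency Plancherel lower bound of size $\sim\delta a^{\frac{1}{2\beta}}$ for the linear part (the paper takes $\widehat\psi\equiv1$ on the unit ball and computes $\sup_t\varepsilon(1-e^{-at})t^{-\frac{1}{2\beta}}$; your shrinking ball $|\xi|\le a^{\frac{1}{2\beta}}$ at $t_a=a^{-1}$ is the same computation), and the same ``$\delta$ versus $\delta^2$'' mechanism for the correction carrying the factor $(e^{-a(t-t')}-1)$ (attached to $G^0$ in your grouping, to $G^a$ in the paper's). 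Where you genuinely differ is the hardest term, $G^a-G^0$. The paper bounds it \emph{uniformly in time} by $\varepsilon^2a^{\frac{1}{2\beta}}$: it invokes Proposition \ref{4prop5} at the \emph{fractional} order $\alpha=\frac{1}{\beta}-1$, for which the exponent $\frac{\beta(1+\alpha)}{\alpha\beta+3\beta-1}$ equals exactly $\frac{1}{2\beta}$ (and interpolates the $\alpha=1$ rate against the decay of Proposition \ref{4prop1} for the other factor), combined with Hausdorff--Young-type kernel bounds such as $\|e^{-|\xi|^{2\beta}s}\|_{L^{2\beta/(2\beta-1)}}\sim s^{-\frac{2\beta-1}{2\beta^2}}$ and Lemma \ref{lemma7}. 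You instead work only at $t_a=a^{-1}$, use the integer orders $\alpha=0,1$ (whose rates $a^{\frac{\beta}{3\beta-1}}$ and $a^{\frac{2\beta}{4\beta-1}}$ are respectively weaker and stronger than $a^{\frac{1}{2\beta}}$, since $(2\beta-1)(\beta-1)<0$ and $(2\beta-1)^2>0$ on $(\frac12,1)$) on one factor and pure decay on the other, and recover the missing power of $a$ from the decay of the fractional heat convolution at $t_a$. This does close: the representative low-frequency exponents are $\frac{\beta}{3\beta-1}+\frac{1}{2\beta}$ (with a $\ln a^{-1}$ from the $s_2=1$ case of Lemma \ref{lemma7}) and $\frac{2\beta}{4\beta-1}+\frac{1}{2\beta}-\frac12$, whose margins over $\frac{1}{2\beta}$, namely $\frac{\beta}{3\beta-1}$ and $\frac{1}{2(4\beta-1)}$, are positive for $\beta\in(\frac12,1)$, while the high-frequency pieces gain at least a full factor $a$ via Lemma \ref{lemma2}. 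The paper's route buys uniform-in-time control without logarithms; yours buys modularity, using only the stated integer-order conclusions of Propositions \ref{4prop1}, \ref{4prop5} and \ref{4prop12}, at the price of a harmless log and of controlling the error only at one time, which is all a sup lower bound requires. One caveat you share with the paper: both arguments implicitly assume that the constants in the decay estimates scale linearly with the data size, so that $\|G^0\|_{L^1}\lesssim\delta^2(1+t')^{-3/2}$ is genuinely quadratic against the linear term's $\delta$; this constant tracking is asserted rather than proved in both write-ups, so it is not a gap specific to yours.
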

\begin{proof}
From (\ref{eq3}), we have
\begin{align*}
  &\tilde{\tau}^{0}(t)-\tilde{\tau}^{a}(t)\\ \nonumber
  =& \left(1-e^{-at}\right)e^{-(-\Delta)^{\beta} t}\tilde{\tau}_{0}+\int_{0}^{t}\left(1-e^{-a\left(t-t^{\prime}\right)}\right)e^{ -(-\Delta)^{\beta}(t-t^{\prime})}[Q_{1}(u^{a},\nabla\tau^{a})+Q_{2}(\nabla u^{a},\tau^{a})]dt^{\prime}\\ \nonumber
& +\int_{0}^{t}e^{-(-\Delta)^{\beta}\left(t-t^{\prime}\right)}\left[Q_{1}(u^{0},\nabla\tau^{0})-Q_{1}(u^ {a},\nabla\tau^{a})+Q_{2}(\nabla u^{0},\tau^{0})-Q_{2}(\nabla u^{a},\tau^{a})\right]dt^{\prime}.\nonumber
\end{align*}
Here, we take $u_0 = \varepsilon\varphi$, $\tau_0 = \varepsilon\psi$ with $\widehat{\varphi}, \widehat{\psi}\in C_{0}^{\infty}$ and $\varepsilon$ sufficiently small. Besides, we assume that $\widehat{\psi}(\xi) = 1$ for $|\xi|\le 1$. For $ t\ge 1$, we have
\begin{align*}
\|(1-e^{-at})e^{-(-\Delta)^{\beta} t}\tilde{\tau}_{0}\|_{L^{2}}& =\|(1-e^{-at})e^{-|\xi|^{2\beta}t}\varepsilon\widehat{tr\psi}(\xi)\|_{L^{2}}\\ \nonumber &\gtrsim \varepsilon(1-e^{-at})\|e^{-|\xi|^{2\beta}t}\|_{L^{ 2}(|\xi|\leq 1)}\\ \nonumber
 &\gtrsim \varepsilon(1-e^{-at})t^{-\frac{1}{2\beta}},\nonumber
\end{align*}
which leads to
\begin{align*}
\|(1-e^{-at})e^{-(-\Delta)^{\beta} t}\tilde{\tau}_{0}\|_{L^{\infty}(0,\infty;L^{2})} \gtrsim\|\varepsilon(1-e^{-at})t^{-\frac{1}{2\beta}}\|_{L^{\infty}_{t}}\gtrsim \varepsilon a^{\frac{1}{2\beta}}.
\end{align*}
Hence, we have
 \begin{align*}
 &\quad\left\|\int_{0}^{t}\left(1-e^{-a(t-t^{\prime})}\right)e^{-(-\Delta)^{\beta}(t-t^{\prime})}[Q _{1}+Q_{2}]dt^{\prime}\right\|_{L^{2}}\\ \nonumber
 &\lesssim \int_{0}^{t}\min\{1,a(t-t^{\prime}) \}(t-t^{\prime})^{-\frac{1}{2\beta}}\|Q_{1}+Q_{2}\|_{L^{1}}dt^{\prime}\\ \nonumber
 &\lesssim\int_{0}^{t}a^{\frac{1}{2\beta}}\varepsilon^{2}(1+t^{\prime})^{-\frac {3}{2}}dt^{\prime}\lesssim\varepsilon^{2}a^{\frac{1}{2\beta}}.
 \end{align*}
Due to $\frac{1}{2}<\beta<1$, we deduce from Propositions \ref{4prop1}, \ref{4prop5} and Lemma \ref{lemma7} that
 \begin{align*}
& \left\|\int_{0}^{t} e^{-(-\Delta)^{\beta}(t-t^{\prime})}\left[Q_{1}\left(u^{0}, \nabla \tau^{0}\right)-Q_{1}\left(u^{a}, \nabla \tau^{a}\right)\right] d t^{\prime}\right\|_{L^{2}} \\\nonumber
\lesssim & \int_{0}^{t}\|e^{-|\xi|^{2\beta}(t-t^{\prime})}\|_{L^{\frac{2\beta}{2\beta-1}}}\left\|u^{0}-u^{a}\right\|_{L^{\frac{2\beta}{2\beta-1}}}\left\|\nabla \tau^{0}\right\|_{L^{2}}+\|e^{-|\xi|^{2\beta}(t-t^{\prime})}\|_{L^{2}}\left\|u^{a}\right\|_{L^{2}}\left\|\nabla (\tau^{0}- \tau^{a})\right\|_{L^{2}}dt^{\prime}\\\nonumber
\lesssim & \int_{0}^{t}(t-t^{\prime})^{-\frac{2\beta-1}{2\beta^{2}}}\left\|\nabla ^{\frac{1}{\beta}-1}(u^{0}-u^{a})\right\|_{L^{2}}\left\|\nabla \tau^{0}\right\|_{L^{2}}+(t-t^{\prime})^{-\frac{1}{2\beta}}\left\|u^{a}\right\|_{L^{2}}\left\|\nabla (\tau^{0}-\tau^{a})\right\|_{L^{2}} dt^{\prime} \\\nonumber
\lesssim & \int_{0}^{t}(t-t^{\prime})^{-\frac{2\beta-1}{2\beta^{2}}}\varepsilon^{2} a^{\frac{1}{2\beta}}(1+t^{\prime})^{-\frac{1}{\beta}}+(t-t^{\prime})^{-\frac{1}{2\beta}}\varepsilon^{2} a^{\frac{1}{2\beta}}(1+t^{\prime})^{-\frac{6\beta^2-4\beta+1}{4\beta^{2}}}dt^{\prime} \\\nonumber
\lesssim &\ \varepsilon^{2} a^{\frac{1}{2\beta}}.
 \end{align*}
Similarly, we find 
 \begin{align*}
\left\|\int_0^te^{-(-\Delta)^{\beta}(t-t^{\prime})}[Q_2(\nabla u^0,\tau^0)-Q_2(\nabla u^a,\tau^a)]dt^{\prime}\right\|_{L^2}\lesssim \ \varepsilon^{2} a^{\frac{1}{2\beta}}.
 \end{align*}
Combining the above estimates, it comes out
\begin{align*}
\|\tilde{\tau}^{a}-\tilde{\tau}^{0}\|_{L^{\infty}([0,\infty),L^{2})}\ge Ca^{ \frac{1}{2\beta}}.
\end{align*}
\end{proof}

\smallskip
\noindent\textbf{Acknowledgments} This work was partially supported by the National Natural Science Foundation of
 China (No.12171493).

\phantomsection
\addcontentsline{toc}{section}{\refname}
\bibliographystyle{abbrv} 
\bibliography{Oldroyd-Bref}

\end{document}